      \def\om{\omega}
      \def\aut#1{\operatorname{Aut(#1)}}
      \def\be{\operatorname{\beta}}
      \def\dim{\operatorname{dim}}
      \def\al{\alpha}
      \def\cp{\mathcal{P}}
      \def\cl#1{\overline{#1}}
      \def\clx#1#2{\overline{#2}^{#1}}
      \def\int#1{\operatorname{int} (#1)}
      \def\intx#1#2{\operatorname{int}_{#1} (#2)}
      \def\id{\operatorname{id}}
      \def\sp{\operatorname{sp}}
      \def\es{\varnothing}
      \def\Tau{{\mathcal T}}
      \def\sset#1{\{#1\}}
      \def\set#1{\bbset#1\eeset}
      \def\bbset#1:#2\eeset{\{#1\,:\,#2\}}
      \def\bbsett#1:#2\eesett{\{#1\,:\,\text{#2}\}}
      \def\go{{\mathfrak{O}}} 
      \def\gs{{\mathfrak{S}}}
      \def\cm{{\mathcal M}}
      \def\cpcc/{CPCC}
      \def\ed/{ED}
      \def\edd/{ED${}_d$}
      \def\lkr{\leq_{KR}}
      \newtheorem{assertion}{Statement}
      \newtheorem{proposition}{Proposition}
      \newtheorem{theorem}{Theorem}
      \newtheorem{lemma}{Lemma}
      \newtheorem{cor}{Corollary}
      \theoremstyle{definition}
      \newtheorem{definition}{Definition}
      \newtheorem{question}{Question}
      \title{Homogeneous Subspaces of Products of Extremally Disconnected Spaces}
      \author{Evgenii Reznichenko\thanks{Supported by RFBR, project no. 17-51-18051 Bolg\_a}}
\date{}
\begin{document}
      
      \maketitle

      \section{Introduction}

      In \cite{cr1966} it was proved that the product of pseudocompact groups 
is pseudocompact. In \cite{cm1985} the  compactness-type properties of products  
of homogeneous spaces were studied. In particular,  homogeneous 
pseudocompact spaces $X$ and $Y$ whose product $X\times Y$ is 
not pseudocompact was constructed \cite[Theorem 1.1]{cm1985}. 
      Under Martin's axiom  
homogeneous countably compact spaces $X$ and $Y$ whose product 
$X\times Y$ is not pseudocompact were also constructed \cite[Theorem 4.1]{cm1985} 
      and the problem of the 
existence of such spaces without additional set-theoretic assumptions was 
posed \cite[Question 5.1(a)]{cm1985}) . 
      In this paper we construct such an example (Theorem~\ref{nphscp}). However, the 
problem of the existence of a homogeneous pseudocompact space $X$ for 
which the product $X\times X$ is not pseudocompact \cite[Question 5.3]{cm1985} still remains 
open. 
      
      In \cite{cm1987} products of homogeneous extremally 
disconnected spaces were studied. Under Martin's axiom 
homogeneous extremally disconnected countably compact spaces $X$ and $Y$ with non-countably compact 
product $X\times Y$ were constructed \cite[Theorem 4.2]{cm1987}. 
      In \cite{ls1997} this example was improved; namely, the 
product $X\times Y$  was made nonpseudocompact \cite[Theorem 3]{ls1997}. 
      In \cite{k1994} 
      homogeneous extremally disconnected countably compact spaces $X$ and 
$Y$ for which the product $X\times Y$ is not countably compact were constructed  
  without additional set-theoretic assumptions~\cite[Theorem 4.1]{k1994}. 
      
      The homogeneous extremally disconnected countably compact spaces 
constructed in \cite{cm1987,k1994,ls1997} are {\it maximally homogeneous} 
(see Definition~\ref{defmaxhom} below). In this paper 
we prove that the countable power of a maximally homogeneous extremally 
disconnected countably compact space is countably compact (Corollary~\ref{mh6}).
      
      In \cite{arh67} Arhangel'skii proved that any compact subset of an extremally 
disconnected topological group is finite. At the same time, Frol\'{\i}c proved that homogeneous 
extremally disconnected compact spaces are finite~\cite{fro1967}. In this paper we obtain a 
simultaneous generalization of these two theorems; namely, we prove that 
all compact subsets of homogeneous extremally disconnected spaces are finite (Corollary~\ref{tc1}), 
which answers Questions~4.5.2 and~4.5.3 in~\cite{at2009}.
      This statement follows also from Theorem~2(c) of~\cite{Douwen-1979}.
      It can be strengthened: all compact subsets of homogeneous
      subspaces of the third power of an extremally disconnected space 
are finite (Theorem~\ref{t4}). The author is unaware of whether this 
is true for the fourth power without additional set-theoretic assumptions (Question~\ref{q2}). 
However, under CH, it is true. Moreover,
      under CH, all compact subsets of homogeneous
      subspaces of any finite power of an extremally disconnected space 
are finite (Theorem~\ref{t3}) and all compact subsets  
of homogeneous
      subspaces of the countable power of an extremally disconnected space 
are metrizable (Theorem~\ref{t2}).
      It is unknown whether the last statement can be proved na\"{\i}vely 
(Question~\ref{q1}).
      
      We also strengthen Frol\'{\i}k's theorem mentioned above by proving that all compact homogeneous 
subspaces of finite powers of an extremally disconnected space are finite (Theorem~\ref{t5}).

      \section{Conventions, Definitions, and Notation}
      
      All spaces considered in this paper are assumed to be completely regular and Hausdorff.
      The set of positive integers is denoted by $\om$, $\be\om$ is the set of all ultrafilters on 
$\om$, and $\om^*=\be\om\setminus\om$ is the set of free ultrafilters on~$\om$.
      Given a space $X$ and a set $M\subset X$, by $\clx XM$ and $\intx XM$ we denote, 
respectively, the closure and the interior of $M$ in $X$. When it is clear from the context what 
ambient space $X$ is meant, we write simply $\cl M$ and $\int M$
      instead of $\clx XM$ and $\intx XM$.
      By  $\id_X$ we denote the identity self-map of~$X$.
      
      Recall that a space $X$ is said to be {\it extremally disconnected} 
(\emph{\ed/}) if
      $\cl U\cap\cl V=\es$ for any disjoint open sets 
$U,V\subset X$.
      A completely regular Hausdorff space is called 
      an \emph{F-space} if every cozero set is $C^*$-embedded in this space.
      Elementary properties of F-spaces and \ed/ spaces can be found in the book
       \cite{gli-jer-1960} by Gillman and Jerison.
      Any \ed/ space is an F-space.
      
      In  a space $X$ is called a {$\be\om$ 
space} if, given any countable discrete set $M\subset 
X$ with compact closure $\cl M$, its closure 
      is homeomorphic  to $\be\om$~\cite{van-douwen-1981}.
      Any $F$-space is a $\be\om$ space, and 
      any subspace of a $\be\om$ space is a $\be\om$ space.
      
      Let us remind the definition of the  Keisler--Rudin order $\lkr$ on $\om^*$.
      Let $p,q\in \om^*$. Then $p\lkr q$ if and only if there exists
      a map $f\colon\om\to\om$ such that $\beta f(q)=p$.
      Here $\beta f\colon \be \om\to \be\om$ is the continuous extension of the map~$f$.
      
      Let $X$ be space. We say that a sequence $\zeta=(x_n)_{n\in\om}$ on $X$ is 
      {\it exact} if $x_n\ne x_m$ for $n\ne m$ and {\it discrete} if 
the set  $\set{x_n: n\in\om}$ is discrete.
      
      We use the following notation: 
      \begin{itemize}
      \item
      $\gs(X)=X^\om$ is the set of all sequences on $X$;
      \item
      $\gs_k(X)$ is the set of sequences on $X$ whose ranges have compact closure in~$X$; 
      \item
      $\gs_d(X)$ is the set of discrete exact sequences on $X$;
      \item
      $\gs_{dk}(X)=\gs_{d}(X)\cap \gs_{k}(X)$ is the set of discrete exact 
sequences on $X$ whose ranges have compact closure.
      \end{itemize}

      Let $\Tau$ denote the topology of $X$, and let $\Tau_*=\Tau\setminus\sset{\es}$.
      Then 
      \begin{itemize}
      \item
       $\go(X)=\Tau_*^\om$ is the set of sequences of nonempty open 
subsets of~$X$;
      \item
      $\go_d(X)$ is the set of disjoint sequences of nonempty open 
subsets of~$X$.
      \end{itemize}
      
      Given a  sequence $\xi=(M_n)_{n\in\om}$ of subsets of $X$, a point $x\in 
X$, and a set $M\subset X$, we put
      
      \begin{itemize}
      \item
      \begin{align*} 
      u_x(\xi)=\{p\in\om^*:\,& \set{n\in\om: U\cap M_n\ne\es}\in p 
      \\
      &\mbox{ for each neighborhood $U$ of $x$}\};
      \end{align*}
      \item
      $u_M(\xi)=\bigcup_{x\in M}u_x(\xi)$;
      \item
      $u(\xi)=u_X(\xi)$.
      \end{itemize}
      
      For a sequence $\zeta=(x_n)_{n\in\om}$ of points on~$X$, 
      we set 
      
      \begin{itemize}
      \item
      \begin{align*} 
      u_x(\zeta)=\{p\in\om^*:\,& \set{n\in\om: x_n\in U}\in p 
      \\
      &\mbox{ for each neighborhood $U$ of $x$}\};
      \end{align*}
      \item
      $u_M(\zeta)=\bigcup_{x\in M}u_x(\zeta)$;
      \item
      $u(\zeta)=u_X(\zeta)$.
      \end{itemize}

      Note that, for $\zeta'=(\sset{x_n})_{n\in\om}$, we have $u_x(\zeta)=u_x(\zeta')$,
      $u_M(\zeta)=u_M(\zeta')$, and
      $u(\zeta)=u(\zeta')$.
      
      We also set 
      \begin{itemize}
      \item
      $\sp(x,X)=\bigcup_{\zeta\in\gs_d(X)} u_x(\zeta)$;
      \item
      $\sp(X)=\bigcup_{x\in X} \sp(x,X)$;
      \item
      $\sp_k(X)=\bigcup_{\zeta\in\gs_{dk}(X)} u_x(\zeta)$;
      \item
      $\sp_k(X)=\bigcup_{x\in X} \sp_k(x,X)$.
      \end{itemize}
      
      Note that $p\in u_x(\zeta)$ if and only if $x$ is
      the {\it $p$-limit} of the sequence $\zeta$.
      We denote the $p$-limit of $\zeta$ by $\lim_p \zeta$.
      
      \section{Sequences on Spaces}
      
      \begin{definition}
      We say that a sequence $\zeta=(x_n)_{n\in\om}$ is {\it almost exact} 
if there exists an $N\in\om$ such that $x_n\ne x_m$ for $n\ne m$ and $n,m>N$.
      We say that $\zeta$ is {\it almost stationary} if 
      there exists an $N\in\om$ such that $x_n= x_m$ for $n,m>N$.
      \end{definition}
      
      \begin{assertion}\label{cp1}
      Suppose that $X=\prod_{n\in\om}X_n$ is a countable product of spaces,
      $M\subset X$ is an infinite subset of $X$, and $\pi_n\colon X\to X_n$ is the projection for 
      each $n\in \om$. Then there exists an exact sequence $(x_k)_{k\in\om}\subset M$ 
such that $(\pi_n(x_k)_{k\in\om}$ is an almost stationary or almost exact 
discrete sequence in $X_n$ for each~$n$.
      \end{assertion}
      
\begin{proof}
It is easy to construct, by induction, a sequence of infinite 
sets $(M_n\subset M)_{n\in\om}$ such that, for each $n$, $M_{n+1}\subset M_n$ and
      either $|\pi_n(M_n)|=1$ or $\pi_n\restriction_{M}$ is an injective 
map and the set $\pi_n(M_n)$ is discrete in $X_n$.
      It remains to take an  exact sequence $(x_k)_{k\in\om}$  such that $x_k\in 
M_k$ for all~$k$.
      \end{proof}
      
      \begin{assertion}\label{a1n1}
      Let $X$ be a space, and let $(U_n)_{n\in\om}\in\go(X)$. 
      Then there exists an infinite set $M\subset\om$ and 
      a family of nonempty open sets $(V_n)_{n\in M}$ such that
      $V_{n}\subset U_n$ for $n\in M$
      and  one of  the following  conditions holds:
      \begin{itemize}
      \item[\rm(1)] the family $\set{V_n:n\in M}$ is disjoint;
      \item[\rm(2)] all $V_n$ coincide and consist of one point.
      \end{itemize}
      \end{assertion}
      
\begin{proof}
Consider two cases.
      
      {\it The first case}.\enspace 
      For any finite set  $K$, the complement $U_n\setminus K$ is nonempty for 
      infinitely many~$n$. 
      Then it is easy to inductively construct an infinite  set 
      $M'\subset\om$ and a  sequence $(x_n)_{n\in M'}$ such that $x_n\in U_n$ and $x_m\neq 
x_n$  for any different $n,m\in M'$. Choose an infinite set $M\subset M'$ so that 
      $\set{x_n:n\in M}$ is discrete. Take neighborhoods $W_n$ of $x_n$ for which the family 
      $\set{W_n:n\in M}$ is disjoint. The family of $V_n=U_n\cap W_n$ satisfies condition~(1).

      {\it The second case.}\enspace 
      There exists a finite set $K\subset X$ which contains almost all $U_n$. 
In this case, we can find its subset $U\subset K$ and an infinite set $M\subset\om$ such 
that $U_n=U$ for $n\in M$. Choose $u\in U$. The family of $V_n=\sset{u}$, $n\in M$, satisfies 
condition~(2).
      \end{proof}

      \begin{assertion}\label{cp2}
      Let $X=\prod_{n\in\om}X_n$ be a countable product of spaces, 
and let $(W_k)_{k\in\om}\in \go(X)$.
      Then there exists an increasing sequence $(n_k)_{k\in\om}\subset 
\om$ and sequences 
      $(U_{n,k})_{k\in\om}\in \go(X_n)$, $n\in\om$, such that 
      $\prod_{n\in\om} U_{n,k} \subset W_{n_k}$ and, for each $n\in\om$, 
the family 
$\zeta_n=\set{U_{n,k}: k>n}$ satisfies one of  the following  conditions:
      \begin{itemize}
      \item[\rm(1)] $\zeta_n$ is disjoint;
      \item[\rm(2)] $\zeta_n$ consists of coinciding one-point sets. 
      \end{itemize}
      \end{assertion}
      
\begin{proof}
      There exist $(V_{n,k})_{k\in\om}\in \go(X_n)$, $n\in\om$, such that 
$\prod_{n\in\om} V_{n,k} \subset W_{k}$.
      
      Using Statement~\ref{a1n1}, we can easily construct 
      a decreasing sequence $(M_n)_{n\in\om}$ infinite subsets 
of $\om$ and
      a family of sequences $(Q_{n,k})_{k\in\om}\in\go(X_n)$ 
      so that $Q_{n,k}\subset V_{n,k}$ for all $n,k$ 
      and  one of  the following  conditions holds:
      \begin{itemize}
      \item[\rm(1)] $\set{Q_{n,k}:k\in M_n}$ is disjoint;
      \item[\rm(2)] $\set{Q_{n,k}:k\in M_n}$ consists of coinciding one-point 
sets.
      \end{itemize}
     It remains to take an increasing sequence $(n_k)_{k\in\om}\subset 
\om$ with $n_k\in M_k$ for $k\in \om$ and 
      put $U_{n,k}=Q_{n,n_k}$ for $k,n\in \om$.
      \end{proof}

      \begin{assertion}
      For any space $X$ and any sequences $(U_n)_{n\in\om},(V_n)_{n\in\om}\in\go_d(X)$, 
      one of  the following  conditions holds:
      \begin{itemize}
      \item[\rm(1)] $U_n=V_n$ and $|U_n|=1$ for almost all $n$;
      \item[\rm(2)]
      there exists an infinite set $M\subset\om$ and 
      families  $(U'_n)_{n\in M}$ and  $(V'_n)_{n\in M}$ of nonempty open sets
such that $\bigcup_{n\in M} U'_n\cap \bigcup_{n\in M} V'_n =\es$
      and $U'_n\subset U_n$, $V'_n\subset V_n$ for all $n\in M$.
       \end{itemize}
      \end{assertion}
      
\begin{proof}
      Suppose that (1) does not hold. Then there exists an infinite set 
$M'\subset \om$ and 
      families $(U''_n)_{n\in M}$ and $(V''_n)_{n\in 
M}$ of nonempty open sets such that $U''_n\cap V''_n=\es$ 
and $U''_n\subset U_n$, $V''_n\subset 
V_n$ for all $n\in M'$. We set 
      $A_n=\set{m\in M': U''_n\cap V''_m\neq \es,\ m>n}$ and
      $B_n=\set{m\in M': U''_m\cap V''_n\neq \es,\ m>n}$ for $n\in M'$.
      Consider three cases.
      
      {\it The first case.}\enspace 
The set $A_n$ is infinite for some $n\in M'$.   
In this case, we take $M=A_n$ and put $U'_m=U''_m$ and $V'_m=V''_m\cap U_n$ for $m\in M$.
      
      {\it The second case}.\enspace 
The set $B_n$ is infinite for some $n\in M'$.   
      In this case, we take $M=B_n$ and put $U'_m=U''_m\cap V_n$ and $V'_m=V''_m$ for $m\in M$.
      
      {\it The third case.}\enspace 
The sets $A_n$ and $B_n$ are finite for all $n\in M'$.
      In this case, there exists a sequence $(m_k)_{k\in\om}\subset M'$ such that
      $m_{k+1}>m_k$, $m_{k+1}>\max A_{m_k}$, and $m_{k+1}>\max B_{m_k}$ for all 
$k$.
      We take $M=\set{m_k:k\in \om}$ and put $U'_m=U''_m$ and
       $V'_m=V''_m\cap U_n$ for $m\in M$.
      \end{proof}
      
      \section{Extremally Disconnected Spaces\\ and Ultrafilters}
      
      \begin{assertion}[\cite{gli-jer-1960,bal-dow-1991,com-neg-1991}]
\label{ed1}
      The following assertions are equivalent for any completely regular Hausdorff space 
$X$:
      \begin{itemize} 
      \item[\rm(1)] $X$ is \ed/;
      \item[\rm(2)] every dense subset of $X$ is \ed/;
      \item[\rm(3)] every open subset of $X$ is \ed/;
      \item[\rm(4)] every dense subset of $X$ is $C^*$-embedded;
      \item[\rm(5)] every open subset of $X$ is $C^*$-embedded.
      \end{itemize} 
      \end{assertion}
      
     The following assertion is a direct consequence of definitions.
      
      \begin{assertion}\label{ed3}
      A space $X$ is a  $\be\om$ space if and only if 
$|u_x(\zeta)|\leq 1$
      for any $x\in X$ and $\zeta\in\gs_{dk}(X)$.
      \end{assertion}

      \begin{proposition}\label{ed4}
      Let $X$ be a space.
      \begin{itemize}
      \item[\rm(1)] If $X$ contains an infinite compact set, then $\sp_k(X)=\om^*$.
      \item[\rm(2)] If $X$ is homogeneous, then $\sp(X)=\sp(x,X)$ and 
$\sp_k(X)=\sp_k(x,X)$ for any $x\in X$.
      \end{itemize}
      \end{proposition}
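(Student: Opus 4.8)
The plan is to treat the two parts independently; both come down to transporting the data that witnesses membership in $\sp$ (resp.\ $\sp_k$) through a continuous map, using the fact that $p$-limits commute with continuous maps.

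For part~(1), note first that $\sp_k(X)\subseteq\om^*$ holds by definition, so it suffices to show $\om^*\subseteq\sp_k(X)$. Let $K\subseteq X$ be an infinite compact set. I would begin by extracting from $K$ a discrete exact sequence $\zeta=(x_n)_{n\in\om}$, using the standard fact that every infinite Hausdorff space contains an infinite discrete subspace. Put $D=\{x_n:n\in\om\}\subseteq K$. Since $K$ is compact, it is closed, so $\cl D\subseteq K$ is compact and hence $\zeta\in\gs_{dk}(X)$. Now fix an arbitrary $p\in\om^*$. Because $\cl D$ is compact Hausdorff, the $p$-limit $x=\lim_p\zeta$ exists and lies in $\cl D\subseteq X$; unwinding the definition, this says precisely that $p\in u_x(\zeta)$. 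Therefore $p\in\sp_k(x,X)\subseteq\sp_k(X)$, and since $p$ was arbitrary we conclude $\sp_k(X)=\om^*$.

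For part~(2), fix $x\in X$. The inclusions $\sp(x,X)\subseteq\sp(X)$ and $\sp_k(x,X)\subseteq\sp_k(X)$ are immediate from the definitions, so the content is the reverse inclusion. Take any $y\in X$ and any $p\in\sp(y,X)$, witnessed by a sequence $\zeta=(y_n)_{n\in\om}\in\gs_d(X)$ with $y=\lim_p\zeta$. By homogeneity I would pick a homeomorphism $h\colon X\to X$ with $h(y)=x$ and set $\eta=(h(y_n))_{n\in\om}$. Since $h$ is a homeomorphism, $\eta$ is again exact and its range $h(D)$ is discrete, so $\eta\in\gs_d(X)$; and since $h$ is continuous, $x=h(\lim_p\zeta)=\lim_p\eta$, so $p\in u_x(\eta)\subseteq\sp(x,X)$. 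Hence $\sp(y,X)\subseteq\sp(x,X)$ for every $y$, which gives $\sp(X)=\sp(x,X)$. The statement for $\sp_k$ is proved in exactly the same way, the only extra observation being that a homeomorphism carries $\cl D$ onto $\cl{h(D)}$, so if $\zeta\in\gs_{dk}(X)$ then $\eta\in\gs_{dk}(X)$ as well.

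Most of this is bookkeeping with the definitions of $u_x$, $\sp$, and $\sp_k$; the two facts I would isolate and reuse are that $p$-limits commute with continuous maps (if $y=\lim_p(y_n)$ and $h$ is continuous, then $h(y)=\lim_p(h(y_n))$, by pulling neighborhoods of $h(y)$ back through $h$) and that homeomorphisms preserve exactness, discreteness, and closures. The only genuinely non-formal step is the extraction of a discrete exact sequence inside $K$ in part~(1), where the Hausdorff hypothesis is essential: a short recursion works, choosing at each stage a point together with an open neighborhood that omits infinitely many of the not-yet-used points, passing to that infinite remainder, and finally deleting the finitely many earlier (hence closed) points from each neighborhood to see that the chosen set is discrete.
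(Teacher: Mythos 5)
Your proof is correct and follows essentially the same route as the paper's (much terser) argument: for (1), the infinite compact set yields some $\zeta\in\gs_{dk}(X)$, and every $p\in\om^*$ has a $p$-limit in the compact closure of its range, so $u_X(\zeta)=\om^*$; for (2), homogeneity transports witnessing sequences, giving $\sp(x,X)=\sp(y,X)$ and $\sp_k(x,X)=\sp_k(y,X)$ for all $x,y$. You have simply filled in the routine details the paper leaves implicit.
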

      
\begin{proof} 
(1)\enspace       Since $X$ contains an infinite compact set, it follows that $\gs_{dk}(X)\neq\es$.
      If $\zeta\in \gs_{dk}(X)$, then $u_X(\zeta)=\om^*$.
      
(2)\enspace  Since $X$ is homogeneous, it follows that $\sp(x,X)=\sp(y,X)$ and \tolerance2000 
$\sp_k(x,X)=\sp_k(y,X)$ for $x,y\in X$.
      \end{proof}
      
      \begin{proposition}[Frol\'{\i}k, \cite{fro1967}]
\label{ed5}
      Let $X$ be an \ed/ space, and let $x\in X$. 
      Then the set $\sp(x,X)$ is totally ordered with respect to the 
Keisler--Rudin order.
      \end{proposition}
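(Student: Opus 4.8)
The plan is to work inside $X$ and to exploit the defining feature of \ed/ spaces that the closure of every open set is clopen; the consequence I will lean on is that a $p$-limit of a sequence can never lie outside a clopen set that the sequence eventually avoids. Fix $p,q\in\sp(x,X)$ and choose discrete exact sequences $\zeta=(x_m)_{m\in\om}$ and $\eta=(y_n)_{n\in\om}$ witnessing $p\in u_x(\zeta)$ and $q\in u_x(\eta)$, so that $x=\lim_p\zeta=\lim_q\eta$. Since $p,q\in\om^*$, the point $x$ differs from every $x_m$ and every $y_n$, so by regularity and discreteness of $\sset{x_m}$ I may pick pairwise disjoint open sets $U_m\ni x_m$ with $x\notin\cl{U_m}$. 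For $S\subset\om$ write $G_S=\bigcup_{m\in S}U_m$; disjointness of the $U_m$ gives $G_S\cap G_{S'}=G_{S\cap S'}$, and \ed/ness makes each $\cl{G_S}$ clopen with $\cl{G_S}\cap\cl{G_{S'}}=\cl{G_{S\cap S'}}$.

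The heart of the matter is the equivalence $S\in p\iff x\in\cl{G_S}$, valid for every $S\subset\om$. Indeed, if $S\in p$, then intersecting $S$ with $\set{m:x_m\in O}\in p$ for each neighbourhood $O$ of $x$ shows $x\in\cl{\set{x_m:m\in S}}\subset\cl{G_S}$; and if $S\notin p$ the same argument places $x$ in $\cl{G_{\om\setminus S}}$, which is clopen and disjoint from $\cl{G_S}$, so $x\notin\cl{G_S}$. Because $\cl{G_S}$ is a clopen neighbourhood of each of its points, I can append the $q$-side: if $y_n\in\cl{G_S}$ for $q$-many $n$ the $q$-limit $x$ lands in the closed set $\cl{G_S}$, and conversely $x\in\cl{G_S}$ makes $\cl{G_S}$ a neighbourhood of $x$, so $\set{n:y_n\in\cl{G_S}}\in q$. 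This yields the chain $S\in p\iff x\in\cl{G_S}\iff\set{n:y_n\in\cl{G_S}}\in q$.

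Applying the clopen-limit principle to the clopen neighbourhood $\cl{G_\om}=\cl{\bigcup_m U_m}$ of $x$ gives $\set{n:y_n\in\cl{G_\om}}\in q$, and I split into two cases. In Case (A), $\set{n:y_n\in\bigcup_m\cl{U_m}}\in q$; for such $n$ there is a unique $f(n)=m$ with $y_n\in\cl{U_m}$, and then $y_n\in\cl{G_S}\iff f(n)\in S$ (for $m\notin S$ the sets $\cl{U_m}$ and $\cl{G_S}$ are closures of disjoint open sets, hence disjoint). Feeding this into the chain gives $S\in p\iff\set{n:f(n)\in S}\in q\iff S\in\beta f(q)$, so $\beta f(q)=p$ and $p\lkr q$. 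In Case (B), $\set{n:y_n\in\cl{G_\om}\setminus\bigcup_m\cl{U_m}}\in q$; for such $n$ the formula $t_n=\set{S\subset\om:y_n\in\cl{G_S}}$ defines a \emph{free} ultrafilter on $\om$ (it is an ultrafilter because the clopen sets $\cl{G_S}$ respect Boolean operations in $S$, and free because $y_n\notin\cl{U_m}=\cl{G_{\sset m}}$ for all $m$). Now the chain reads $S\in p\iff\set{n:S\in t_n}\in q$, i.e.\ $p=\lim_q t_n$ in $\om^*$, a Rudin--Frol\'{\i}k relation placing $q$ below $p$: given pairwise disjoint $A_n\in t_n$, the map $g(k)=n$ for $k\in A_n$ satisfies $g^{-1}(B)\in p\iff B\in q$, whence $\beta g(p)=q$ and $q\lkr p$.

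The main obstacle is the last step of Case (B): passing from $p=\lim_q t_n$ to pairwise disjoint $A_n\in t_n$ requires the ultrafilters $t_n$, for $n$ in a $q$-large set, to form a discrete subset of $\om^*$. I expect to secure this from the discreteness of $\eta$ in the \ed/ space $X$, separating the $y_n$ by disjoint clopen sets and transporting that separation through the continuous map $\Phi\colon\cl{G_\om}\to\be\om$, $\Phi(z)=\set{S:z\in\cl{G_S}}$ (continuous since $\Phi^{-1}(\cl S)=\cl{G_S}$ is clopen), after first refining $\eta$ to a $q$-large subsequence. Every other step reduces to the single \ed/ fact that closures of open sets are clopen: this is what forbids the $q$-limit from escaping the relevant clopen pieces and what makes the bookkeeping $S\mapsto\cl{G_S}$ an exact Boolean isomorphism.
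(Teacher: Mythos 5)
The paper gives no proof of this proposition --- it is quoted from Frol\'{\i}k's 1967 paper --- so your argument can only be judged on its own terms. Your overall architecture is the classical one and much of it is sound: the correspondence $S\mapsto\cl{G_S}$ is a Boolean isomorphism onto a family of clopen sets (the nontrivial inclusion $\cl{G_S}\cap\cl{G_{S'}}\subset\cl{G_{S\cap S'}}$ follows from the \ed/ identity $\cl{A}\cap\cl{B}=\cl{A\cap B}$ for open $A,B$), the chain $S\in p\iff x\in\cl{G_S}\iff\{n: y_n\in\cl{G_S}\}\in q$ is correct, and Case (A) is complete. One small repair: regularity alone does not make a countable discrete set strongly discrete; here you must use extremal disconnectedness (hence zero-dimensionality) --- take clopen $V_m\ni x_m$ with $V_m\cap\{x_k: k\in\om\}=\{x_m\}$ and put $U_m=V_m\setminus\bigcup_{k<m}V_k$.

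The genuine gap is in Case (B), exactly where you flag it, and the repair you sketch does not work. Continuity of $\Phi$ cannot transport the discreteness of $\{y_n\}$ to $\{t_n\}$: continuous maps may collapse discrete sets, and $\Phi$ is far from injective on $\cl{G_\om}$ --- its fiber over a type $t$ is the closed set $\bigcap_{S\in t}\cl{G_S}$, which can perfectly well contain an infinite discrete set together with its $q$-limit. In the degenerate sub-case $t_n=t$ for all relevant $n$, your identity $S\in p\iff\{n: S\in t_n\}\in q$ collapses to $p=t$ and yields no relation between $p$ and $q$ whatsoever, so no purely ultrafilter-theoretic manipulation of the $t_n$ can finish; extra geometric input is indispensable. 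The input that actually works is to know that $q$-many $y_n$ lie in $\cl{\{x_m: m\in\om\}}$ itself, not merely in $\cl{G_\om}$: then, choosing pairwise disjoint clopen $W_n\ni y_n$, one gets $y_n\in\cl{\{x_m: m\in\om\}}\cap W_n\subset\cl{\{x_m: x_m\in W_n\}}$, so the pairwise disjoint sets $A_n=\{m: x_m\in W_n\}$ genuinely belong to $t_n$ and your map $g$ concludes. Your fattening of $\{x_m\}$ to $G_\om$ destroys precisely this: from $y_n\in\cl{G_\om}\cap W_n$ you only obtain $\{m: U_m\cap W_n\ne\es\}\in t_n$, and these sets need not be disjoint. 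Reinstating the set-level dichotomy creates a further case ($q$-many $y_n$ outside $\cl{\{x_m\}}$ and $p$-many $x_m$ outside $\cl{\{y_n\}}$) that must be excluded using extremal disconnectedness; that exclusion together with the disjoint-$A_n$ construction is Frol\'{\i}k's trichotomy lemma and is the real content of the theorem, so the step you left open is the crux rather than a technicality.
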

      
      \begin{proposition}\label{ed6}
      Let $X$ be a $\be\om$ space, and let $x\in X$. 
      Then the set $\sp_k(x,X)$ is totally ordered with respect to the Keisler--Rudin order.
      \end{proposition}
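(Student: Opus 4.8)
The plan is to fix $p,q\in\sp_k(x,X)$ and show they are $\lkr$-comparable by reducing, in every case, to Frol\'{\i}k's theorem (Proposition~\ref{ed5}) applied to a suitable \ed/ compactum. By definition of $\sp_k$ there are sequences $\zeta=(x_n)_{n\in\om}\in\gs_{dk}(X)$ and $\eta=(y_m)_{m\in\om}\in\gs_{dk}(X)$ with $p\in u_x(\zeta)$ and $q\in u_x(\eta)$, so $x=\lim_p\zeta=\lim_q\eta$. Put $A=\{x_n:n\in\om\}$ and $B=\{y_m:m\in\om\}$. Since $\zeta,\eta\in\gs_{dk}(X)$, the sets $A$ and $B$ are discrete with compact closures, so by the definition of a $\beta\om$ space the closures $\cl A$ and $\cl B$ are homeomorphic to $\beta\om$ and hence are \ed/. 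The whole idea is that each of $\cl A,\cl B$ (and, in one case, a joint closure) is an \ed/ compactum inside which both $p$ and $q$ are realized, so that Proposition~\ref{ed5} applies.

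The reduction splits according to how each sequence meets the closure of the other. First I would treat the case $\{n:x_n\in\cl B\}\in p$. Passing to this $p$-large index set replaces $p$ by a $\lkr$-equivalent ultrafilter and therefore does not affect $\lkr$-comparability, so I may assume $A\subset\cl B$. Then $x=\lim_p(x_n)\in\cl B$, the sequence $\zeta$ is an exact discrete sequence lying in $\cl B$, and $\eta$ is likewise an exact discrete sequence in $\cl B$ with $x=\lim_q\eta$; hence $p,q\in\sp(x,\cl B)$. As $\cl B$ is \ed/, Proposition~\ref{ed5} says $\sp(x,\cl B)$ is totally ordered by $\lkr$, so $p$ and $q$ are comparable. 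The symmetric case $\{m:y_m\in\cl A\}\in q$ is handled identically inside $\cl A$.

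It then remains to treat $S:=\{n:x_n\notin\cl B\}\in p$ and $T:=\{m:y_m\notin\cl A\}\in q$ simultaneously. Set $A'=\{x_n:n\in S\}$ and $B'=\{y_m:m\in T\}$ (again passing to large index sets only changes the ultrafilters up to $\lkr$-equivalence). The key observation is that $A'\cup B'$ is discrete: no point of $A'$ lies in $\cl{B'}\subset\cl B$ and no point of $B'$ lies in $\cl{A'}\subset\cl A$, so, combining this with the discreteness of $A$ and of $B$, every point of $A'\cup B'$ has a neighbourhood meeting $A'\cup B'$ only in itself. Since $\cl{A'\cup B'}=\cl{A'}\cup\cl{B'}$ is compact, the definition of a $\beta\om$ space gives $\cl{A'\cup B'}\cong\beta\om$, which is \ed/. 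Both $(x_n)_{n\in S}$ and $(y_m)_{m\in T}$ are exact discrete sequences in this compactum with $p$-limit, respectively $q$-limit, equal to $x$, so $p,q\in\sp(x,\cl{A'\cup B'})$ and Proposition~\ref{ed5} once more yields comparability. The three cases are exhaustive, which proves that $\sp_k(x,X)$ is totally ordered by $\lkr$.

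The substantive content is the reduction itself: compactness of $\cl A$ and $\cl B$ together with the $\beta\om$-space property is exactly what converts each configuration into a genuine \ed/ space, so that Frol\'{\i}k's theorem can be invoked even though $X$ need not be \ed/. The step I expect to require the most care is the last one, namely the verification that $A'\cup B'$ is discrete (so that its closure is again a copy of $\beta\om$); this is the point that excludes the a~priori possibility of two ``independent'' sequences converging to $x$ with $\lkr$-incomparable types, and it is where the restriction to sequences with \emph{compact} closure (i.e.\ to $\sp_k$ rather than $\sp$) is essential.
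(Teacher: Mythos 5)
Your proof is correct and follows essentially the same strategy as the paper's: both realize $p$ and $q$ in $\sp(x,K)$ for a single compactum $K\cong\be\om$ obtained as the closure of a countable discrete set assembled from the two sequences, and then invoke Frol\'{\i}k's theorem (Proposition~\ref{ed5}). The only difference is bookkeeping: the paper produces the discrete set in one stroke as the set of isolated points of $\zeta\cup\xi$, whereas you run a three-case analysis and pass to subsequences, which is legitimate since restricting to an index set belonging to the ultrafilter changes it only up to Rudin--Keisler equivalence and so does not affect $\lkr$-comparability.
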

      
\begin{proof}
      Let $p,q\in\sp_k(x,X)$.
      Then 
      we have $\sset p=u_x(\zeta)$ and $\sset q=u_x(\zeta)$ for some $\zeta,\xi\in\gs_{dk}(X)$.
      Let $M$ be the set of isolated points in $\zeta\cup\xi$, and let $K=\cl M$.
      Then $\zeta,\xi\subset K$ and $p,q\in \sp(x,K)$. 
      Since $K$ is homeomorphic to $\be\om$ and extremally disconnected, it follows by 
Proposition~\ref{ed5} that $p$ and $q$ are $\lkr$-comparable.
      \end{proof}

      \begin{proposition}[{Kunen, \cite[Lemma 4]{kunen-1990}}]
\label{ed7}
      Let $p, q \in \om^*$ be $\lkr$-incomparable weak $P$-points, 
      and let $X$ be any compact F-space. Suppose that 
      $\zeta=(x_n)_{n\in\om}\in\gs_{d}(X)$, 
      $\xi=(y_n)_{n\in\om}\in\gs(X)$, 
      and $x = \lim_p \zeta = \lim_q \xi$.
      Then $\set{n : y_n = x} \in q$.
      \end{proposition}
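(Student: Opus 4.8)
The plan is to argue by contradiction. Assume $\set{n\in\om: y_n=x}\notin q$; replacing $\xi$ by its restriction to the $q$-large complement, we may suppose $y_n\neq x$ for all $n$ and look for a contradiction. Observe first that no value is attained $q$-often: if $\set{n: y_n=c}\in q$ for some $c$, then $c=\lim_q\xi=x$, against $y_n\neq x$. Put $D=\set{x_m: m\in\om}$. Since $\zeta\in\gs_d(X)$, the set $D$ is countable and discrete, and $K=\cl D$ is compact because $X$ is compact. As $X$ is an F-space it is a $\be\om$ space, so $K$ is homeomorphic to $\be\om$, the homeomorphism sending each $x_m$ to $m\in\om$; since $x=\lim_p\zeta$, it sends $x$ to $p$. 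I identify $K$ with $\be\om$ and $x$ with $p$ from now on.

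Exactly one of the following three sets lies in $q$. If $B_1=\set{n: y_n\in D}\in q$, write $y_n=x_{g(n)}$ for $n\in B_1$ and extend $g\colon\om\to\om$ arbitrarily off $B_1$; under the identification $y_n$ becomes $g(n)\in\om$, and the $q$-limit in $\be\om$ of the natural numbers $g(n)$ is $\beta g(q)$, so $p=\beta g(q)$, i.e.\ $p\lkr q$, contradicting incomparability. If $B_2=\set{n: y_n\in K}\setminus B_1\in q$, then each $y_n$ ($n\in B_2$) is a free ultrafilter of $K=\be\om$ distinct from $p$, and $p=\lim_q\xi$ places $p$ in the closure of the countable set $\set{y_n: n\in B_2}\subset\om^*\setminus\sset p$, contradicting that $p$ is a weak $P$-point.

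The remaining case $B_3=\set{n: y_n\notin K}\in q$ is, I expect, the heart of the matter, and the only place where the weak-$P$-point hypothesis on $q$ rather than on $p$ is used; note that here $y_n$ differs from $x$ and from every $x_m$. The strategy I would pursue is to extract an infinite $C\subset B_3$ with $C\in q$ for which $(y_n)_{n\in C}$ is an exact discrete sequence with range disjoint from $K$ and accumulating on $K$ only at $x$. Then $D\cup\set{y_n: n\in C}$ is again a countable discrete set with compact closure, hence a copy $K'$ of $\be\om$ whose isolated points are labelled by $\om$ through injective maps $a,b\colon\om\to\om$, with $a(m)$ the label of $x_m$ and $b(n)$ the label of $y_n$; since $x=\lim_p\zeta=\lim_q(y_n)_{n\in C}$ corresponds to a single point of $\be\om=K'$, reading off labels gives $\beta a(p)=\beta b(q)$, and injectivity of $a,b$ then forces $p\eqkr q$, once more contradicting incomparability. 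The obstacle is precisely the extraction of $C$: the sequence $\xi$ is arbitrary and may accumulate badly, and since a weak $P$-point need not be a $P$-point one cannot in general thin an arbitrary sequence to an injective discrete subsequence along a prescribed ultrafilter. This is the crux, and it is here that the two F-space tools (disjoint cozero sets are completely separated, and every cozero set is $C^*$-embedded) must be combined with the weak-$P$-point property of $q$ to separate the $y_n$ by cozero sets and to govern the way $\xi$ clusters, so that a suitable discrete $q$-subsequence can be isolated.
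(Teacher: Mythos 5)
The paper does not prove this proposition at all --- it is imported verbatim as Kunen's Lemma~4 --- so there is no internal argument to compare yours against; I can only judge the proposal as a proof, and as a proof it is incomplete. Your reduction to $y_n\neq x$ and your first two cases are correct: identifying $K=\cl{D}$ with $\be\om$ (sending $x_m$ to $m$ and $x$ to $p$), the sub-case $\{n: y_n\in D\}\in q$ yields $p\lkr q$, and the sub-case $\{n: y_n\in K\setminus D\}\in q$ contradicts the weak $P$-point property of $p$. But note that these two cases use only the $\be\om$-space property of $X$ and only $p$'s weak $P$-point property. The third case, $\{n: y_n\notin K\}\in q$, is where the full F-space hypothesis and the hypotheses on $q$ must carry the load, and there you give no argument: only a strategy (``the strategy I would pursue\dots'') followed by an accurate explanation of why that strategy does not obviously work (an arbitrary sequence cannot in general be thinned along $q$ to an exact discrete subsequence, and a weak $P$-point need not be a $P$-point). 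You have correctly located the crux of the lemma, but locating it is not proving it; as submitted, the statement is unproved.

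A concrete suggestion on where your decomposition loses traction: splitting on the position of $y_n$ relative to $K=\cl{D}$ dumps all of the difficulty into one case, whereas splitting relative to a disjoint cozero expansion of $D$ lets the F-space property do visible work. Fix disjoint cozero sets $U_m\ni x_m$ and put $U=\bigcup_m U_m$. If $\{n: y_n\notin\cl{U}\}\in q$, cover those $y_n$ by a cozero set $V$ disjoint from $U$; in an F-space disjoint cozero sets are completely separated, hence have disjoint closures, yet $x\in\cl{U}$ (because $x\in\cl{\{x_m\}}$) and $x\in\cl{V}$ (because $x=\lim_q\xi$) --- contradiction, with no extraction of subsequences needed. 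If $\{n: y_n\in\cl{U}\}\in q$, then since $U$ is cozero and hence $C^*$-embedded, $\cl{U}=\beta U$ and the map $f\colon U\to\om$ with $f(U_m)=\{m\}$ extends to $\beta f\colon\cl{U}\to\be\om$ with $\beta f(x)=p$; the sub-cases $\beta f(y_n)\in\om$ and $\beta f(y_n)\in\om^*\setminus\{p\}$ then reduce to your two completed arguments. Even so, a residual case survives --- $\beta f(y_n)=p$ with $y_n\neq x$ for $q$-many $n$ --- and that is exactly where the weak $P$-point property of $q$ (not just of $p$) has to be brought to bear. Neither your outline nor this refinement supplies that step, so the gap is genuine.
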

      
      Proposition~\ref{ed7} implies the following assertion. 
      
      \begin{proposition}\label{ed8}
      Let $p, q \in \om^*$ be $\lkr$-incomparable weak $P$-points, 
      and let  $X$ be any F-space. Suppose that 
      $\zeta=(x_n)_{n\in\om}\in\gs_{dk}(X)$, 
      $\xi=(y_n)_{n\in\om}\in\gs_k(X)$, and 
      $x = \lim_p \zeta = \lim_q \xi$.
      Then $\set{n : y_n = x} \in q$.
      \end{proposition}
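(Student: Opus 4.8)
My plan is to deduce this from Kunen's Proposition~\ref{ed7} by reducing the arbitrary F-space $X$ to the compact case via its Stone--\v Cech compactification. The key point is that since $X$ is an F-space, $\be X$ is a \emph{compact} F-space (see \cite{gli-jer-1960}), so Proposition~\ref{ed7} becomes applicable in $\be X$ once the hypotheses on $\zeta$, $\xi$, and $x$ are transported there. All the data that matter for Kunen's lemma — the ultrafilters $p,q$, their being $\lkr$-incomparable weak $P$-points, and the equality $y_n=x$ — are unaffected by which space we work in, so the whole task is to check that the structural hypotheses survive the embedding $X\hookrightarrow\be X$.

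First I would regard $X$ as a dense subspace of $\be X$ and view $\zeta=(x_n)$ and $\xi=(y_n)$ as sequences in $\be X$. The sequence $\zeta$ stays discrete and exact in $\be X$: exactness is automatic, and if an open $W_n\subset X$ isolates $x_n$ from the other $x_m$, then any open set of $\be X$ whose trace on $X$ is $W_n$ still isolates $x_n$, because every $x_m$ lies in $X$. Hence $\zeta\in\gs_d(\be X)$, while trivially $\xi\in\gs(\be X)$. The compactness in $\zeta\in\gs_{dk}(X)$ and $\xi\in\gs_k(X)$ is what guarantees, in addition, that the closures of the two ranges computed in $X$ are already closed in $\be X$, so that passing to the compactification adds no new limit points to them.

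Next I would verify that the limit relations survive. Since $x\in X$ and $X$ carries the subspace topology inherited from $\be X$, every neighborhood of $x$ in $X$ is the trace of a neighborhood of $x$ in $\be X$ and conversely; and as each $x_n$ and each $y_n$ belongs to $X$, the sets $\set{n:x_n\in U}$ and $\set{n:y_n\in U}$ are the same whether $U$ ranges over neighborhoods of $x$ in $X$ or in $\be X$. Therefore $x=\lim_p\zeta=\lim_q\xi$ continues to hold in $\be X$. With these checks done, Proposition~\ref{ed7}, applied to the compact F-space $\be X$, to $p,q$, and to the sequences $\zeta\in\gs_d(\be X)$ and $\xi\in\gs(\be X)$, yields $\set{n:y_n=x}\in q$, which is exactly the desired conclusion since $y_n=x$ is the same condition in $X$ as in $\be X$.

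The only genuinely nontrivial ingredient is the passage to $\be X$, i.e.\ the fact that the F-space property lifts from $X$ to its Stone--\v Cech compactification; once that is granted, the transfer of discreteness and of the $p$- and $q$-limit relations is routine, using only that $X$ is a subspace of $\be X$ that contains the point $x$ together with all terms of both sequences.
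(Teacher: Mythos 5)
Your proof is correct and realizes exactly what the paper intends: the paper offers no argument beyond the remark that Proposition~\ref{ed7} implies Proposition~\ref{ed8}, and your reduction to the compact F-space $\be X$ (using that $X$ is an F-space if and only if $\be X$ is, together with the routine transfer of discreteness and of $p$- and $q$-limits to the larger ambient space) is a valid way to carry out that deduction. The only point worth noting is that the hypotheses $\zeta\in\gs_{dk}(X)$ and $\xi\in\gs_k(X)$ suggest the author had in mind the slightly different reduction to the compact set $K=\cl{\sset{x_n:n\in\om}\cup\sset{y_n:n\in\om}}$, which is $C^*$-embedded in $X$ and hence itself a compact F-space; your route via $\be X$ shows these compact-closure hypotheses are in fact not needed.
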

      
      \begin{proposition}[Simon, \cite{Simon-1985}; see also
      \cite{kunen-1978,Shelah-Rudin-1979}]
\label{ed9} 
      There exists a set $C\subset \om^*$, $|C|=2^{2^\om}$, consisting of pairwise
      $\lkr$-incomparable weak $P$-ultrafilters.
      \end{proposition}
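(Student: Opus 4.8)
I would reconstruct a proof along the lines of Kunen's ZFC construction of weak $P$-points, upgraded so as to yield many pairwise $\lkr$-incomparable ones at once. Two elementary cardinal facts frame the argument. First, $|\om^*|=2^{2^\om}$, so $2^{2^\om}$ is the largest size $C$ could possibly have. Second, every $p\in\om^*$ has at most $2^\om$ many $\lkr$-predecessors, since $\{\,q\in\om^*: q\lkr p\,\}\subseteq\{\,\beta f(p): f\colon\om\to\om\,\}$ and there are only $2^\om$ such maps. The catch is that the set of $\lkr$-\emph{successors} of a point can itself have size $2^{2^\om}$, so a pure counting (thinning-out) argument fails: incomparability has to be built in, not harvested afterwards.

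The combinatorial backbone is an independent family on $\om$ of size $2^\om$, more precisely an independent matrix $\{\,A_{\alpha,n}:\alpha<2^\om,\ n<\om\,\}$ whose rows are partitions of $\om$ and which is independent across rows; such objects exist in ZFC. Already the crude version explains the cardinality: for each selector $\sigma\in{}^{(2^\om)}2$ the sets $A_\alpha^{\sigma(\alpha)}$ (with $A^0_\alpha=A_\alpha$, $A^1_\alpha=\om\setminus A_\alpha$) generate a proper filter whose extensions to ultrafilters give $2^{2^\om}$ distinct points of $\om^*$. The real task is to refine the matrix so that the ultrafilters read off from it are \emph{simultaneously} weak $P$-points and pairwise $\lkr$-incomparable.

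For the weak $P$-point property I would not try to diagonalise against the $2^{2^\om}$ countable subsets of $\om^*$ --- a recursion of length $2^\om$ cannot reach them --- but instead arrange, following Kunen, that each selected ultrafilter has an $\om_1$-\emph{OK} base: for every decreasing sequence $(B_n)$ of its members there is an uncountable subfamily of the filter whose finite intersections refine the $B_n$. An $\om_1$-OK ultrafilter is automatically a weak $P$-point, because any countable $\{\,q_n:n\in\om\,\}\subset\om^*$ missing it can then be separated from it by a single member of the filter, and the independent matrix is exactly the reservoir of sets that keeps this OK-condition feasible at every step. Crucially, I would build a \emph{single} rich matrix by a recursion of length $2^\om$, treating each map $f\colon\om\to\om$ once (there are only $2^\om$ of them) by enlarging the matrix to record an obstruction that forces $f$ to scramble the independent coordinates; the effect is that no $f$ can carry the coordinate-selection of one branch onto that of another. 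Hence, once the matrix is complete, \emph{every} two distinct branches yield $\lkr$-incomparable ultrafilters --- for all $2^{2^\om}$ pairs at once --- while each branch yields a weak $P$-point, and picking one ultrafilter per branch produces the set $C$.

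The main obstacle is the joint bookkeeping. The OK-condition that secures weak-$P$-ness and the anti-$f$ obstructions that secure incomparability both draw on, and impose constraints over, the same independent structure, and all of this must stay compatible with independence through all $2^\om$ steps: committing to one requirement must never exhaust the reservoir the others need. Managing this balance is precisely the delicate matrix construction of Kunen and of Shelah--Rudin, refined by Simon; it is where the genuine difficulty lies, the surrounding cardinal arithmetic being routine.
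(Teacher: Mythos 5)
The paper offers no proof of this proposition: it is imported as a black box from Simon \cite{Simon-1985} (building on Kunen \cite{kunen-1978} and Shelah--Rudin \cite{Shelah-Rudin-1979}), so there is no internal argument to compare yours against. Judged on its own terms, your outline correctly identifies the machinery of those sources --- an independent matrix of size $2^\om$, Kunen's $\om_1$-OK condition as the ZFC-provable surrogate for weak $P$-ness, and a recursion of length $2^\om$ that processes each map $f\colon\om\to\om$ once --- and your framing estimates are accurate: each $p$ has at most $2^\om$ many $\lkr$-predecessors $\beta f(p)$, while the successor sets can be too large for any thinning argument to work.

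As a proof, however, there is a genuine gap: every step that makes the theorem true is announced rather than carried out. Three concrete points. First, the assertion that treating each $f$ once ``forces $f$ to scramble the independent coordinates'' so that afterwards all pairs of branches are incomparable is exactly the Shelah--Rudin argument; since there are $2^{2^\om}$ pairs of branches but only $2^\om$ stages, a single stage must defeat $f$ for \emph{every} pair simultaneously, and you do not say what is added to the filter or matrix to achieve this, nor why it preserves independence for the remaining requirements. Second, the OK-requirements face the same counting obstruction you correctly raise for the na\"{\i}ve weak-$P$ diagonalisation: each of the $2^{2^\om}$ target ultrafilters has $2^\om$ decreasing sequences of members to handle, so a length-$2^\om$ recursion cannot treat them branch by branch; the requirements must be discharged uniformly across all branches at once, which is precisely what Simon's independent \emph{linked} families are for, and this is not addressed. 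Third, ``picking one ultrafilter per branch'' is not automatic: $\om_1$-OK is a property of an ultrafilter, not of a filter, and is not inherited by arbitrary ultrafilter extensions, so one must argue that every (or at least one) ultrafilter extending each branch's filter is $\om_1$-OK. In short, the proposal is a correct and well-oriented roadmap to the cited literature, but the content of the theorem lives entirely in the construction you defer.
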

      
      \begin{proposition}[\cite{Rudin-1956}]
\label{ed10}
      \textup{(CH)}
      There exists a set $C\subset \om^*$, $|C|=2^{2^\om}$, consisting of pairwise
      $\lkr$-incomparable selective ultrafilters.
      \end{proposition}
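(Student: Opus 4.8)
The plan is to exploit the fact that selective ultrafilters are exactly the $\lkr$-minimal points of $\om^*$, which reduces $\lkr$-incomparability to a triviality once one has enough pairwise non-isomorphic selective ultrafilters. Recall the classical characterization: a free ultrafilter $p$ is selective if and only if for every $f\colon\om\to\om$ either $f$ is constant on some member of $p$ or $f$ is injective on some member of $p$. In terms of $\lkr$ this says precisely that whenever $p$ is selective and $\beta f(p)$ is free, $f$ is one-to-one on a set in $p$, so $\beta f(p)$ is isomorphic to $p$ via the induced bijection mod finite. Hence any two $\lkr$-comparable selective ultrafilters are isomorphic, and therefore pairwise non-isomorphic selective ultrafilters are automatically pairwise $\lkr$-incomparable. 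Since selective ultrafilters are weak $P$-points, the statement thus strengthens Proposition~\ref{ed9} under CH, and it suffices to produce $2^{2^\om}$ pairwise non-isomorphic selective ultrafilters.

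Next I would replace ``pairwise non-isomorphic'' by the weaker ``pairwise distinct'' through a cardinality count. The isomorphism class of a fixed $q\in\om^*$ is $\{\beta h(q): h\text{ a permutation of }\om\}$ and so has size at most $2^\om$. A union of at most $\kappa$ sets each of size at most $2^\om$ has cardinality at most $\kappa\cdot 2^\om$, which is strictly less than $2^{2^\om}$ whenever $\kappa\le 2^\om$; consequently any family of $2^{2^\om}$ distinct selective ultrafilters must meet $2^{2^\om}$ distinct isomorphism classes. Picking one representative from each class yields $2^{2^\om}$ pairwise non-isomorphic, hence pairwise $\lkr$-incomparable, selective ultrafilters. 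The problem is now reduced to constructing $2^{2^\om}$ distinct selective ultrafilters, and this is where CH enters.

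For the construction I would run the standard diagonalization organized as a binary tree of height $\omega_1$. Using CH, list all functions $\om\to\om$ together with all subsets of $\om$ as $\langle g_\alpha:\alpha<\omega_1\rangle$, an enumeration of length $\omega_1=2^\om$. For each branch $s\in 2^{\omega_1}$ I would recursively build a $\subseteq^*$-decreasing tower $(A^s_\alpha)_{\alpha<\omega_1}$ of infinite subsets of $\om$, depending only on $s\restriction\alpha$, as follows. At a successor $\alpha+1$, first thin $A^s_\alpha$ to a set on which $g_\alpha$ is constant or injective (when $g_\alpha$ is a function) or which is contained in $g_\alpha$ or in its complement (when $g_\alpha$ is a subset), and then split the result into two infinite pieces, keeping the piece indexed by $s(\alpha)$. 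Handling the function requirements makes the generated ultrafilter selective; handling the subset requirements makes it an ultrafilter; and the splitting forces distinct branches to diverge into disjoint sets, so the $2^{\omega_1}=2^{2^\om}$ branches give $2^{2^\om}$ distinct selective ultrafilters.

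The only delicate point, and the sole place where CH is used essentially, is the limit stages. Since the recursion has length $\omega_1$, every proper initial segment $(A^s_\beta)_{\beta<\lambda}$ is a countable $\subseteq^*$-decreasing family and hence has a pseudo-intersection, which I take as $A^s_\lambda$; thus the tower survives all limits. One then verifies that selectivity is genuinely inherited: each requirement $g_\alpha$ is met at stage $\alpha$ and is preserved by all later thinnings, so in the resulting ultrafilter every $f\colon\om\to\om$ is constant or injective on a member. Granting these two routine verifications, the tree produces the required $2^{2^\om}$ distinct selective ultrafilters, and the reductions of the first two paragraphs then deliver $2^{2^\om}$ pairwise $\lkr$-incomparable selective ultrafilters, completing the proof.
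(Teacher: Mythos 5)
The paper states this proposition as a known result with a citation to Rudin's 1956 paper and supplies no proof of its own, so there is nothing internal to compare against; your argument is the standard classical one and is correct. The reduction of $\lkr$-incomparability to non-isomorphism via the Ramsey characterization of selective ultrafilters, the counting argument showing that $2^{2^\om}$ distinct selective ultrafilters must occupy $2^{2^\om}$ isomorphism classes, and the CH-based binary tree of $\subseteq^*$-decreasing towers of height $\om_1$ (with pseudo-intersections at countable limit stages and disjoint splittings forcing distinct branches to yield distinct ultrafilters) are all sound.
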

      
      \begin{assertion}\label{ed11} 
      Suppose that $X$ is a $\be\om$ space, $A$ is a set, 
      $K$ is a compact subspace of $X^A$, and 
      one of  the following  conditions holds:
      \begin{itemize}
      \item[\rm(1)] $A$ is finite and $K$ is infinite;
      \item[\rm(2)] $A$ is infinite and $w(K)>|A|$.
      \end{itemize}
      Then $\be\om$ is embedded  in $K$.
      \end{assertion}
      
\begin{proof}
      For each $\al\in A$, let $\pi_\al: X^A\to X$ denote the projection onto the $\al$th 
factor.
      There exists an $\al\in A$ for which $\pi_\al(K)$ is infinite.
      Let $M$ be a countable discrete subspace of $K$ for which
      $\pi_\al\restriction_M$ is injective and $\pi_\al(M)$ is 
      a discrete subspace of $X$. Then $\cl M$ is homeomorphic to $\be\om$.
      \end{proof}

      \begin{assertion}\label{ed12} 
      Suppose that $X$ is an $\be\om$ space,
      $p\in\om^*$ is a selective ultrafilter, 
$\zeta=(z_n)_{n\in\om}\in\gs(X^\om)$,
and   $z=\lim_p\zeta\in X^\om$. Suppose also that 
      $\sset z\cup \set{z_n:n\in M}$ is nonmetrizable for any $M\in p$.
      Let $\pi_k:X^\om\to X$ denote the projection of $X^\om$ onto the $k$th factor.
      Then there exists an $m\in\om$ and an $N\in p$ such that
      the sequence $(\pi_m(z_n))_{n\in N}$
      is discrete and exact, i.e.,
      the set $\set{\pi_m(z_n):n\in N}$ is discrete and $\pi_m(x_j)\ne 
\pi_m(x_i)$
      for any different $i,j\in N$.
      \end{assertion}
      
\begin{proof}
      First, we take $m\in\om$ such that $\pi_m(M)$ is infinite for 
each $M\in p$. Such an $m$ exists. Indeed, otherwise, we can 
choose $M_m\in p$ so that $|\pi_m(M_m)|=1$ for each $m\in \om$.  
      Since the ultrafilter $p$ is selective, there exists an $M\in p$ such that 
$M_m\setminus M$ is finite for each $m\in \om$. The set 
      $\sset z\cup \set{z_n:n\in M}$ is metrizable, which contradicts the assumption. 
      
      Let $(U_n)_{n\in\om}$ be a sequence of neighborhoods 
of the point $\pi_m(z)$ for which $\pi_m(\zeta)\cap\bigcap_{n\in\om}U_n$ contains at 
most one point and $U_{i+1}\subset\cl{U_{i+1}}\subset U_{i}$ for 
$i\in\om$. 
Let $N_n=\set{i\in\om: \pi_m(x)i)\in U_n}$. Then $N_n\in p$. Since the ultrafilter $p$ is selective, 
we can find $N\in p$ such that $|N\cap N_i\setminus N_{i+1}|\le 1$ for 
$i\in\om$. The set $\set{\pi_m(z_n):n\in N}$ is discrete, and $\pi_m(x_j)\ne \pi_m(x_i)$ for 
different $i,j\in N$.
      \end{proof}

      \section{Countably Compact and Pseudocompact\\ Product Spaces}
      
       The following assertion is easy to verify.

      \begin{assertion}\label{ss1}
      Suppose that $\set{X_\al:\al\in A}$ is a family spaces,
      $\zeta_\al=(R_{\al,n})_{n\in\om}$ is a sequence of nonempty 
subsets of $X_\al$ for each $\al$, and  
      $R_n=\prod_{\al\in\om}R_{\al,n}$.
      Then  the sequence $(R_n))_{n\in\om}$ is locally finite (or, in other words, 
has no accumulation points) in $\prod_{\al\in A}X_\al$ if and only if 
      $\bigcap_{\al\in A} u_{X_\al}(\zeta_\al)=\es$.
      \end{assertion}

      \begin{proposition}\label{p1s}
      Let $X$ be a space, and let $\tau$ be a cardinal.
      \begin{itemize}
      \item[\rm(a)] The following conditions are equivalent:
      \begin{itemize}
      \item[\rm(1)] $X$ is countably compact;
      \item[\rm(2)]  $u(\zeta)\neq \es$ for all $\zeta\in  \gs(X)$;
      \item[\rm(3)]  $u(\zeta)\neq \es$ for all $\zeta\in \gs_d(X)$.
      \end{itemize}
      
      \item[\rm(b)] The following conditions are equivalent:
      \begin{itemize}
      \item[\rm(1)] $X^\om$ is countably compact;
      \item[\rm(2)]  $\bigcap_{\zeta\in\gamma}u(\zeta)\neq \es$ for all $\gamma 
\subset \gs(X)$ with $|\gamma|\le \om$;
      \item[\rm(3)]  $\bigcap_{\zeta\in\gamma}u(\zeta)\neq \es$ for all $\gamma 
\subset \gs_d(X)$ with $|\gamma|\le \om$.
      \end{itemize}
      
      \item[\rm(c)] The following conditions are equivalent:
      \begin{itemize}
      \item[\rm(1)] $X^\tau$ is countably compact;
      \item[\rm(2)]  $\bigcap_{\zeta\in\gamma}u(\zeta)\neq \es$ for all $\gamma 
\subset \gs(X)$ with $|\gamma|\le \tau$.
      \end{itemize}
      \end{itemize}
      \end{proposition}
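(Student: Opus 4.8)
The plan is to reduce all three parts to a single mechanism. For any product $Y=\prod_{\al}Y_\al$ and any sequence $\Xi=(\Xi_n)_{n\in\om}$ in $Y$ with coordinate sequences $\zeta_\al=(\pi_\al(\Xi_n))_{n\in\om}$, a point $y$ is a $p$-limit of $\Xi$ iff $\pi_\al(y)=\lim_p\zeta_\al$ for every $\al$; hence $u(\Xi)=\bigcap_\al u(\zeta_\al)$. Since a space is countably compact exactly when every $\om$-sequence has a cluster point, and a cluster point of a sequence is precisely a $p$-limit for some free $p$, this says $Y$ is countably compact iff $u(\Xi)\ne\es$ for every $\Xi\in\gs(Y)$. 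This is exactly Statement~\ref{ss1} applied to the singleton boxes $R_{\al,n}=\sset{\pi_\al(\Xi_n)}$, whose product $R_n=\sset{\Xi_n}$ is locally finite iff $\Xi$ has no cluster point, so I would invoke Statement~\ref{ss1} rather than reprove this.

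First I would settle the $(1)\Leftrightarrow(2)$ equivalences in all three parts simultaneously. For $Y=X^\tau$ the coordinate sequences of an $\om$-sequence form a family $\gamma\subset\gs(X)$ with $|\gamma|\le\tau$, and conversely every such $\gamma$ arises this way: given $\gamma$, build $\Xi$ whose coordinates on $|\gamma|$ of the factors run through $\gamma$ and are constant on the rest; since a constant sequence $c$ has $u(c)=\om^*$, the factors carrying constants drop out and $u(\Xi)=\bigcap_{\zeta\in\gamma}u(\zeta)$. Thus $X^\tau$ is countably compact iff $\bigcap_{\zeta\in\gamma}u(\zeta)\ne\es$ for all $\gamma\subset\gs(X)$ with $|\gamma|\le\tau$, which is (c); the specializations $\tau=\om$ and ``families of size $\le 1$'' give (b)$(1)\Leftrightarrow(2)$ and (a)$(1)\Leftrightarrow(2)$. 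Part (c) is then complete, and I would remark that no $\gs_d$-version is available there because Statement~\ref{cp1} is a countable-product statement and cannot tame uncountably many coordinate sequences at once.

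It remains to prove the $\gs_d$-refinements $(2)\Leftrightarrow(3)$ in (a) and (b); since $\gs_d(X)\subset\gs(X)$, the direction $(2)\Rightarrow(3)$ is free and the work is $(3)\Rightarrow(2)$, which I argue contrapositively. For (a) this is immediate: if some $\zeta\in\gs(X)$ has $u(\zeta)=\es$ then $\zeta$ has no cluster point, so its range is infinite and closed discrete, and any exact enumeration of a countable subset of that range is a genuine element of $\gs_d(X)$ with empty $u$, contradicting (3). For (b), if $X^\om$ is not countably compact, pick $\Xi$ with $u(\Xi)=\es$; its range $M$ is infinite (else a value repeats and yields a cluster point) and closed discrete, and Statement~\ref{cp1} applied to $M\subset X^\om=\prod_k X$ yields an exact $\Psi=(x_j)_j\subset M$ each of whose coordinate sequences $\eta_k=(\pi_k(x_j))_j$ is almost stationary or almost exact discrete. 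As $M$ is closed discrete, $\Psi$ has no cluster point, so by the mechanism $\bigcap_k u(\eta_k)=u(\Psi)=\es$. The almost stationary coordinates contribute $u(\eta_k)=\om^*$ and drop out, and each almost exact discrete coordinate I would replace by a $\zeta^{(k)}\in\gs_d(X)$ coming from its eventually injective, eventually discrete tail; the resulting countable $\gamma'\subset\gs_d(X)$ satisfies $\bigcap_{\gamma'}u=\es$, contradicting (3).

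The hard part is exactly this last replacement: turning an almost exact discrete coordinate sequence into a bona fide member of $\gs_d(X)$ while controlling $u$. On a cofinite set $\sset{j>N}$ the sequence $\eta_k$ is injective with discrete range $D$, and because a free ultrafilter ignores the finite set $\sset{j\le N}$, the value of $u(\eta_k)$ is determined by this tail. The delicate point is to realize the tail as a \emph{globally} discrete exact sequence on all of $\om$ without enlarging the intersection — that is, to absorb or replace the finitely many exceptional initial terms (prepending points off $\cl D$, or otherwise passing to the tail) so that $\zeta^{(k)}\in\gs_d(X)$ and $u(\zeta^{(k)})\subset u(\eta_k)$, so that emptiness of $\bigcap_k u(\eta_k)$ transfers to $\bigcap_{\gamma'}u$. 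This finite-modification bookkeeping is where I expect the only genuine friction; once it is in place, all three equivalences follow from the coordinatewise $p$-limit identity together with Statements~\ref{ss1} and~\ref{cp1}.
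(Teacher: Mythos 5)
Your overall route coincides with the paper's: Statement~\ref{ss1} (coordinatewise $p$-limits) yields (1)$\iff$(2) in all three parts, (2)$\implies$(3) is trivial, (a)(3)$\implies$(2) is the passage to a countable closed discrete subset, and (b)(3)$\implies$(2) goes through Statement~\ref{cp1}; you merely run the last implication contrapositively where the paper argues directly. The comparison therefore comes down to the one step you explicitly leave open, and that step is a genuine gap rather than bookkeeping. The coordinate sequences $\eta_k$ produced by Statement~\ref{cp1} are only \emph{almost} exact, with discrete \emph{tail} range $D=\{\eta_k(j): j>N\}$, and neither of your candidate repairs yields a member of $\gs_d(X)$ with the control on $u$ that you need. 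Prepending: to replace the first $N+1$ terms you must (after any re-indexing of the initial segment) supply $N+1$ points of $X\setminus\cl{D}$, since a point $x$ can be adjoined to a discrete set $D$ with $D\cup\sset{x}$ still discrete only when $x\notin\cl{D}$; such points need not exist --- if $D$ is dense in $X$, then $D$ is precisely a dense set of isolated points and admits no proper discrete superset, while reusing tail values destroys exactness. Passing to the tail: re-indexing by the increasing bijection $\sigma\colon\om\to\{j\in\om: j>N\}$ does give an element of $\gs_d(X)$, but it replaces $u(\eta_k)$ by $(\be\sigma)^{-1}(u(\eta_k))$, the preimage under a shift homeomorphism of $\om^*$; the shifts differ from coordinate to coordinate, so emptiness of $\bigcap_k u(\eta_k)$ is not transferred to the new family. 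The finite-modification invariance of $u_x$ that you correctly invoke only reduces the problem to finding a $\gs_d(X)$-sequence agreeing with $\eta_k$ on a cofinite set --- which is the prepending problem again.

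For what it is worth, the paper's own proof of (b) passes over the same point in silence: it applies condition (3) to the family $\gamma=\set{(\pi_n(x_k))_{k\in\om}: n\in\om}$ as though it were a subset of $\gs_d(X)$, which in general it is not. So you have located the only nontrivial issue in this proposition, but neither your sketch nor the paper's closes it; an actual argument is needed (for instance, a separate treatment of the case where some tail range $D$ is dense in $X$, forcing $X$ to have a dense set of isolated points), not the finite adjustment you describe.
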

      
\begin{proof}
      Statement~\ref{ss1} 
      implies the equivalence of conditions (1) and (2) in assertions (a), (b), and (c).
       The implications (2)$\implies$(3) in (a) and (b) are obvious. 
      The implication (3)$\implies$(2) in (a) follows from the fact that each countable set 
contains a discrete countable subset.
      
      Let us prove that (3)$\implies$(2) in (b).
      Let $\pi_n$ denote the projection of $X^\om$ onto the $n$th factor.
      Take an infinite set $M\subset X$.
      Statement~\ref{cp1} implies the existence of an exact 
sequence $(x_k)_{k\in\om}\subset M$ such that
      $(\pi_n(x_k)_{k\in\om}$ is an almost stationary or almost exact 
discrete sequence in $X$.
      In view of (3), we have $P=\bigcap_{\zeta\in\gamma}u(\zeta)\neq \es$, where 
$\gamma=\set{(\pi_n(x_k)_{k\in\om}: n\in\om}$. Take $p\in P$.
      Let $y_n$ be the $p$-limit of the sequence $\pi_n(x_k)_{k\in\om}$, 
and let $y=(y_n)_{n\in\om}$.
      Then the point $y$ is the $p$-limit of $(x_k)_{k\in\om}$. Therefore, 
$y$ is an accumulation point for the set~$M$.
      \end{proof}

      \begin{proposition}\label{p1p}
      Let $X$ be a space, and let $\tau$ be an infinite cardinal.
      \begin{itemize}
      \item[\rm(a)] The following conditions are equivalent:
      \begin{itemize}
      \item[\rm(1)] $X$ is pseudocompact;
      \item[\rm(2)]  $u(\zeta)\neq \es$ for all $\zeta\in  \go(X)$;
      \item[\rm(3)]  $u(\zeta)\neq \es$ for all $\zeta\in \go_d(X)$.
      \end{itemize}
      
      \item[\rm(b)] The following conditions are equivalent:
      \begin{itemize}
      \item[\rm(1)] $X^\om$ is pseudocompact;
      \item[\rm(2)]  $\bigcap_{\zeta\in\gamma}u(\zeta)\neq \es$ for all $\gamma 
\subset \go(X)$ with $|\gamma|\le \om$;
      \item[\rm(3)]  $\bigcap_{\zeta\in\gamma}u(\zeta)\neq \es$ for all $\gamma 
\subset \go_d(X)$ with $|\gamma|\le \om$;
      \item[\rm(4)] $X^\tau$ is pseudocompact.
      \end{itemize}
      \end{itemize}
      \end{proposition}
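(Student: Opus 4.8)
The plan is to follow the proof of Proposition~\ref{p1s}, but with sequences of open sets in place of sequences of points, using the classical fact that a space is pseudocompact if and only if every sequence of nonempty open sets has an accumulation point, together with Statement~\ref{ss1}, which identifies $u(\zeta)\ne\es$ with the existence of such an accumulation point.

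For part~(a): Statement~\ref{ss1} applied to a one-point index set says that a sequence $\zeta=(U_n)_{n\in\om}\in\go(X)$ is locally finite precisely when $u(\zeta)=\es$; thus $u(\zeta)\ne\es$ means exactly that $\zeta$ has an accumulation point, which gives (1)$\iff$(2) by the pseudocompactness criterion above. (Concretely, if $x$ is an accumulation point then the sets $\{n:O\cap U_n\ne\es\}$, over neighbourhoods $O$ of $x$, form a free filter base, and any free ultrafilter refining it lies in $u_x(\zeta)$.) The implication (2)$\implies$(3) is immediate from $\go_d(X)\subset\go(X)$. For (3)$\implies$(2) I would feed an arbitrary $\zeta=(U_n)\in\go(X)$ to Statement~\ref{a1n1}, obtaining an infinite $M\subset\om$ and nonempty open $V_n\subset U_n$ $(n\in M)$ that are either pairwise disjoint or all equal to one common (necessarily isolated) singleton; in the first case the reindexed sequence is in $\go_d(X)$ and (3) supplies an accumulation point, in the second the common point is itself an accumulation point, and in either case it is one for $(U_n)_{n\in\om}$ as well, so $u(\zeta)\ne\es$.

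For part~(b) I would establish the cycle (1)$\implies$(2)$\implies$(3)$\implies$(1) inside $X^\om$. For (1)$\implies$(2), given a countable $\gamma=\{\zeta_n:n\in\om\}\subset\go(X)$ with $\zeta_n=(U_{n,k})_k$, I would truncate to $W_k=\prod_n\tilde U_{n,k}$, where $\tilde U_{n,k}=U_{n,k}$ for $k>n$ and $\tilde U_{n,k}=X$ otherwise, so that each $W_k$ is a nonempty basic open set; pseudocompactness of $X^\om$ gives an accumulation point of $(W_k)$, hence by Statement~\ref{ss1} $\bigcap_n u(\tilde\zeta_n)\ne\es$, and since free ultrafilters ignore the finitely many altered indices $u(\tilde\zeta_n)=u(\zeta_n)$, yielding (2). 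The step (2)$\implies$(3) is trivial. For (3)$\implies$(1) I would take any $(W_k)\in\go(X^\om)$ and apply Statement~\ref{cp2} to obtain an increasing $(n_k)$ and coordinate sequences $(U_{n,k})_k$ with $\prod_n U_{n,k}\subset W_{n_k}$ such that each tail family $\zeta_n=\{U_{n,k}:k>n\}$ is either disjoint or a constant singleton; the singleton coordinates satisfy $u(\zeta_n)=\om^*$, the disjoint ones lie in $\go_d(X)$, so (3) makes $\bigcap_n u(\zeta_n)\ne\es$, whence by Statement~\ref{ss1} the sequence $(\prod_n U_{n,k})_k$, and therefore the subsequence $(W_{n_k})_k$ and hence $(W_k)_k$, has an accumulation point; as $(W_k)$ was arbitrary, $X^\om$ is pseudocompact.

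Finally, to incorporate (4) I would use that pseudocompactness of a power of a fixed space is a countable phenomenon. Since $\tau\ge\om$, the coordinate projection $X^\tau\to X^\om$ onto a countable block is a continuous surjection, so (4)$\implies$(1). Conversely, any sequence of nonempty basic open sets in $X^\tau$ depends on only countably many coordinates $S\subset\tau$; because all factors equal $X$ we have $X^S\cong X^\om$, which is pseudocompact by (1), and an accumulation point of the projected sequence in $X^S$ lifts to one in $X^\tau$ by choosing arbitrary values off $S$, giving (1)$\implies$(4). The main obstacle throughout is the mismatch between Statement~\ref{ss1}, stated for arbitrary product subsets, and pseudocompactness, which concerns open sets: this is what forces the passage to basic open sets and the truncation device, and it requires checking repeatedly that altering finitely many coordinates or indices does not change the sets $u(\zeta)$, precisely because the relevant ultrafilters are free. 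The genuinely new point beyond Proposition~\ref{p1s} is the equivalence with~(4) — that enlarging the exponent past $\om$ cannot spoil pseudocompactness — which is exactly where the reduction to countably many coordinates, available because all factors coincide, is essential.
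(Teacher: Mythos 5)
Your argument for part (a) and for the cycle (1)$\implies$(2)$\implies$(3)$\implies$(1) in part (b) is correct and matches the paper's proof essentially step for step: the same truncation $\tilde U_{n,k}=X$ for $k\le n$ in (1)$\implies$(2), the same appeal to Statement~\ref{cp2} in (3)$\implies$(1), and the same translation between accumulation points and nonemptiness of $u(\zeta)$ via Statement~\ref{ss1}. The one place where you genuinely diverge is condition (4): the paper simply cites Glicksberg's theorem for the equivalence of pseudocompactness of $X^\om$ and $X^\tau$, whereas you prove it directly -- (4)$\implies$(1) by projecting onto a countable block of coordinates, and (1)$\implies$(4) by shrinking a sequence of nonempty open sets in $X^\tau$ to basic open sets, observing that their supports lie in a countable $S\subset\tau$, using $X^S\cong X^\om$ (or a continuous image thereof), and lifting the accumulation point by choosing arbitrary values off $S$. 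That argument is sound (it is exactly the special case of Glicksberg's theorem where all factors coincide, which is what makes the reduction to $X^\om$ available), and it buys you a self-contained proof at the cost of a little extra work; the paper's citation is shorter but imports an external result. One shared gloss worth noting: in (3)$\implies$(1) the sequences produced by Statement~\ref{cp2} are only disjoint on the tail $k>n$, so to literally invoke condition (3) one should observe that $u(\zeta)$ is unchanged when finitely many initial terms are modified (free ultrafilters contain all cofinite sets); the paper elides this in the same way you do.
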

      
\begin{proof}
Assertion (a) follows from Statement~\ref{a1n1}.
      Let us prove (b).
      
      (1)$\implies$(2)
      Let $\gamma=\set{(U_{n,k})_{k\in\om}: n\in\om}\subset \go(X)$.
      We set 
      \[
      V_{n,k}=
      \begin{cases}
      X,          &  k<n\\
      U_{n,k},    &  k\geq n
      \end{cases}
      \]
      Note that $u((U_{n,k})_{k\in\om})=u((V_{n,k})_{k\in\om})$.
      The set $V_k=\prod_{n\in\om}V_{n,k}$ is open in $X^\om$.
      It follows from the pseudocompactness of $X^\om$ that the family $(V_k)_{k\in\om}$ is not 
locally finite. Statement~\ref{ss1} implies 
      $\bigcap_{\zeta\in\gamma}u(\zeta) = 
\bigcap_{n\in\om}u((V_{n,k})_{k\in\om})  \neq \es$.
      
      (2)$\implies$(1).
      Let $(W_k)_{k\in\om}\in\go(X^\om)$.
      There exist sequences $(V_{n,k})_{k\in\om}\in\go(X)$ such that 
$\prod_{n\in\om}V_{n,k}\subset V_k$. 
      By virtue of Statement~\ref{ss1} implies that $(V_k)_{k\in\om}$ 
      is not locally finite.
      
      The implication (2)$\implies$(3) is obvious.
      
      (3)$\implies$(1).
      Let $(W_k)_{k\in\om}\in\go(X^\om)$.
      By virtue of Statement~\ref{cp2} 
      there exists an increasing sequence $(n_k)_{k\in\om}\subset 
\om$ and sequences $(U_{n,k})_{k\in\om}\in \go(X)$, $n\in\om$, such that 
      $\prod_{n\in\om} U_{n,k} \subset W_{n_k}$ and, for each $n\in\om$, 
the family 
$\zeta_n=\set{U_{n,k}: k>n}$
      satisfies one of  the following conditions:
      \begin{itemize}
      \item[\rm(1)] $\zeta_n$ is disjoint;
      \item[\rm(2)] $\zeta_n$ consists of coinciding one-point sets.
      \end{itemize}
      According to (3), we have $\bigcap_{n\in\om}u((U_{n,k})_{k\in\om})\neq \es$.
      By virtue of Statement~\ref{ss1} the sequence $(W_k)_{k\in\om}$ is not locally 
finite.
      
      The equivalence (3)$\iff$(4) was proved in~\cite{Gli}.
      \end{proof}

      \begin{definition}
      Let $p\in\om^*$ be an ultrafilter.
      We say that a space $X$ is {\it discretely $p$-compact} 
      if  any discrete exact sequence 
      $(x_n)_{n\in\om}\subset X$
      has a $p$-limit.
      \end{definition}
      
      \begin{definition}
      Let $p\in\om^*$ be an ultrafilter.
      We say that a space $X$ is {\it sequentially $p$-compact} if 
      any infinite set $M\subset X$ has an infinite subset $L\subset M$
      such that any exact sequence $(x_n)_{n\in\om}\subset L$
      has a $p$-limit.
      \end{definition}
      
      Any infinite set contains an choose infinite 
discrete subset. This implies the following assertion. 
      
      \begin{proposition}\label{dsps}
      Let $p\in\om^*$ be an ultrafilter.
      Then any discrete $p$-compact space
      is sequentially $p$-compact.
      \end{proposition}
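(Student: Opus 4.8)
The plan is to read off the result directly from the hint preceding it: every infinite set contains an infinite discrete subset. Let $X$ be a discretely $p$-compact space and let $M\subset X$ be an arbitrary infinite set. First I would invoke that elementary fact to fix an infinite discrete subspace $L\subset M$. The assertion to verify is then that this $L$ is the required witness, i.e., that every exact sequence $(x_n)_{n\in\om}\subset L$ admits a $p$-limit.

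The one point to check is that any such sequence actually belongs to $\gs_d(X)$, that is, is a discrete exact sequence in $X$. Exactness is assumed outright. For discreteness, note that the range $\set{x_n: n\in\om}$ is a subset of $L$; since $L$ is discrete as a subspace of $X$, every subset of $L$ is discrete in $X$, and in particular so is this range. Hence $(x_n)_{n\in\om}\in\gs_d(X)$.

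Once this is established, discrete $p$-compactness of $X$ applies verbatim to $(x_n)_{n\in\om}$ and furnishes the desired $p$-limit. As $M$ was arbitrary, $X$ is sequentially $p$-compact. There is essentially no obstacle here: the argument is a direct unwinding of the two definitions, and the only substantive ingredient is the heredity of discreteness (a subset of a discrete subspace is discrete), which is immediate from the definition of the subspace topology.
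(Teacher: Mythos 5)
Your argument is correct and is exactly the paper's (one-line) proof: the paper derives the proposition directly from the remark that every infinite set contains an infinite discrete subset, which is precisely the witness $L$ you construct. The only detail you add — that a subset of a discrete subspace is discrete, so every exact sequence in $L$ lies in $\gs_d(X)$ — is the same implicit unwinding of definitions the paper relies on.
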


      \begin{proposition}\label{cpscp}
      Let $X$ be a sequentially $p$-compact space.
      Then $X^\om$ is sequentially $p$-compact.
      \end{proposition}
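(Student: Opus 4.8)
The plan is to reduce everything to the coordinatewise behavior of $p$-limits and then run a diagonal extraction over the countably many coordinates. Writing $\pi_n\colon X^\om\to X$ for the projections, I would first record the elementary fact that a sequence $(x_j)_{j\in\om}$ in $X^\om$ has a $p$-limit if and only if each coordinate sequence $(\pi_n(x_j))_{j\in\om}$ has a $p$-limit in $X$, in which case $\lim_p(x_j)=(\lim_p\pi_n(x_j))_{n\in\om}$. This holds because a basic neighborhood of a candidate limit constrains only finitely many coordinates, so the relevant index set is a finite intersection of members of $p$. Hence, to prove $X^\om$ sequentially $p$-compact it suffices, given an infinite $M\subseteq X^\om$, to produce an infinite $L\subseteq M$ such that for every exact sequence $(x_j)\subset L$ and every $n$ the coordinate sequence $(\pi_n(x_j))$ has a $p$-limit.

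Next I would build a decreasing chain $M=M_0\supseteq M_1\supseteq\cdots$ of infinite sets, treating coordinate $n$ at stage $n$. If $\pi_n(M_{n-1})$ is finite, I take $M_n\subseteq M_{n-1}$ infinite on which $\pi_n$ is constant. If $\pi_n(M_{n-1})$ is infinite, I first pick an infinite $B\subseteq M_{n-1}$ on which $\pi_n$ is injective, then apply the sequential $p$-compactness of $X$ to the infinite set $\pi_n(B)$ to obtain an infinite $L_n\subseteq\pi_n(B)$ in which every exact sequence has a $p$-limit, and set $M_n=(\pi_n\restriction_{B})^{-1}(L_n)$, so that $\pi_n\restriction_{M_n}$ is injective with range $L_n$. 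Finally I choose distinct points $c_k\in M_k$ and put $L=\{c_k:k\in\om\}$. Since $\{c_k:k\ge n\}\subseteq M_n$, the tail of $L$ is controlled in coordinate $n$: either $\pi_n$ is constant on it, or $\pi_n$ is injective on it with values in $L_n$.

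Now fix an exact sequence $(x_j)\subset L$ and a coordinate $n$. As $(x_j)$ is injective and $\{c_0,\dots,c_{n-1}\}$ is finite, the set $A=\{j:x_j\in\{c_k:k\ge n\}\}$ is cofinite, hence lies in $p$. In the constant case $\pi_n(x_j)$ equals the constant value for all $j\in A$, so the $p$-limit exists trivially. In the injective case $(\pi_n(x_j))_{j\in A}$ is injective with values in $L_n$, while the finitely many remaining terms are arbitrary. This is the one delicate point, and the step I expect to be the main obstacle: a sequence that agrees off a finite set with an exact sequence in $L_n$ need not itself be an exact sequence in $L_n$, and a $p$-limit is \emph{not} preserved under reindexing of a sequence. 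The key observation that saves the argument is that the defining property of $L_n$ is stable under bijective reindexing. Indeed, for a bijection $\gamma\colon\om\to\om$ and an exact sequence $e$ in $L_n$, the sequence $e\circ\gamma$ is again exact in $L_n$ and $\lim_p(e\circ\gamma)=\lim_{\be\gamma(p)}e$; thus every exact sequence in $L_n$ also has a $\be\gamma(p)$-limit in $X$, for every bijection $\gamma$.

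To finish, I would pass to the trace of $p$ on $A\in p$ and reindex $A$ onto $\om$: this identifies the $p$-limit of $(\pi_n(x_j))$ with the $\be\gamma(p)$-limit of an exact sequence in $L_n$ for an appropriate bijection $\gamma$, which therefore exists in $X$ by the stability just noted. Consequently $(\pi_n(x_j))$ has a $p$-limit for every $n$, and by the coordinatewise principle $(x_j)$ has a $p$-limit in $X^\om$. This shows that $L$ has the required property, so every infinite subset of $X^\om$ has an infinite subset all of whose exact sequences have $p$-limits, i.e.\ $X^\om$ is sequentially $p$-compact.
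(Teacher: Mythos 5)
Your proof is correct and follows essentially the same route as the paper's: a diagonal extraction over the coordinates, treating each coordinate either by constancy or by applying the sequential $p$-compactness of $X$ to the projected set, followed by the coordinatewise characterization of $p$-limits in $X^\om$. The one ``delicate point'' you isolate (which the paper passes over silently) can be settled more cheaply than by your reindexing lemma: since the bad index set is finite and $p$ is free, replace the finitely many offending terms of $(\pi_n(x_j))_{j\in\om}$ by fresh pairwise distinct points of $L_n$, obtaining an exact sequence in $L_n$ with the same $p$-limit.
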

      
\begin{proof}
Take an infinite set $M\subset X^\om$. By virtue of Statement~\ref{cp1} 
      there exists an exact sequence $(x_k)_{k\in\om}\subset M$ such that
      $(\pi_n(x_k)_{k\in\om}$ is an almost stationary or almost exact 
discrete sequence in $X$ for each $n$ (here $\pi_n:X\to X$ denotes the projection 
onto the $n$th factor).
      There exists a decreasing sequence $(S_n)_{n\in\om}$ of infinite 
subsets of $\om$ such that, for each $n\in\om$,  one of  the following  conditions holds:
      \begin{itemize}
      \item[\rm(1)] $\pi_n(x_i)=\pi_n(x_j)$ for $i,j\in S_n$;
      \item[\rm(2)] $\pi_n(x_i)\neq \pi_n(x_j)$ for any different $i,j\in S_n$
      and each exact sequence $(y_l)_{l\in\om}\subset 
\set{\pi_n(x_k): k\in S_n}$ has a $p$-limit.
      \end{itemize}
      Take an exact sequence $(s_n)_{n\in\om}$ such that $s_n\in 
S_n$ for all $n$. Let $L=\set{x_{s_k}: k\in\om}$.
      Then any exact sequence $(y_n)_{n\in\om}\subset L$ 
has a $p$-limit.
      \end{proof}
      
      \begin{proposition}\label{cedcs}
      Let $X$ be a compact $\be\om$ space, and let $M\subset X$,
      $|M| < 2^{2^\om}$. Then  $Y=X\setminus M$ 
      is a discrete $p$-compact space for some 
$p\in\om^*$.
      \end{proposition}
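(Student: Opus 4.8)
The plan is to reduce the statement to the purely combinatorial task of choosing a single ultrafilter avoiding a prescribed family of sets, and then to perform that choice using the chain structure of the sets $\sp_k(x,X)$ together with Simon's antichain of ultrafilters.

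First I would record the following consequence of compactness. Let $\zeta=(y_n)_{n\in\om}$ be any discrete exact sequence in $Y=X\setminus M$. Then $\set{y_n:n\in\om}$ is a countable discrete subset of $X$, and its closure is compact since $X$ is; as $X$ is a $\be\om$ space, $\cl{\set{y_n:n\in\om}}$ is homeomorphic to $\be\om$. Hence $\lim_p\zeta$ exists in $X$ for \emph{every} $p\in\om^*$, and (since $Y$ carries the subspace topology) it is a $p$-limit of $\zeta$ in $Y$ precisely when it lies in $Y$. Thus the only way discrete $p$-compactness of $Y$ can fail for a given $p$ is that some such $p$-limit lands in $M$.

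Next I would translate this obstruction into membership in the sets $\sp_k(x,X)$. If $z=\lim_p\zeta\in M$, then $p\in u_z(\zeta)$ by definition of the $p$-limit, and since $\zeta\in\gs_{dk}(X)$ (the closure is compact) we get $p\in\sp_k(z,X)$ with $z\in M$. Consequently it suffices to produce a single $p\in\om^*$ with $p\notin\sp_k(x,X)$ for every $x\in M$: for such a $p$, every discrete exact sequence in $Y$ has its automatically existing $p$-limit inside $Y$, which is exactly the assertion that $Y$ is discretely $p$-compact. The selection of this $p$ is the step that uses the cardinality hypothesis. By Proposition~\ref{ed6}, for each $x\in X$ the set $\sp_k(x,X)$ is totally ordered by $\lkr$. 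By Simon's theorem (Proposition~\ref{ed9}) there is a set $C\subset\om^*$ of pairwise $\lkr$-incomparable ultrafilters with $|C|=2^{2^\om}$. A $\lkr$-chain can contain at most one member of the antichain $C$, so each $\sp_k(x,X)$ meets $C$ in at most one point, whence $\bigcup_{x\in M}\sp_k(x,X)$ meets $C$ in at most $|M|<2^{2^\om}=|C|$ points. Therefore $C\setminus\bigcup_{x\in M}\sp_k(x,X)\neq\es$, and any $p$ in this difference works.

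The hard part, conceptually, is the recognition in the previous paragraph that the ``forbidden'' ultrafilters attached to a single point of $M$ form a $\lkr$-chain, so that a $\lkr$-antichain of ultrafilters is blocked by at most one point apiece; this is the Frol\'{\i}k-type phenomenon packaged in Proposition~\ref{ed6}. Once this is in hand, the counting against Simon's $2^{2^\om}$-sized antichain is routine. I would note that the weak $P$-point property of the members of $C$ is not actually invoked here: only their pairwise $\lkr$-incomparability and their cardinality enter the argument.
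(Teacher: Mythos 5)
Your proof is correct and follows essentially the same route as the paper's: the paper argues by contradiction, picking for each $p\in\om^*$ a sequence $\zeta_p\in\gs_d(Y)$ with $\lim_p\zeta_p\in M$ and then applying the pigeonhole principle to Simon's antichain $C$ to find distinct $p,q\in C$ lying in the same chain $\sp_k(x,X)$, which contradicts Proposition~\ref{ed6}. Your direct formulation --- each $\sp_k(x,X)$ is a $\lkr$-chain and hence meets $C$ in at most one point, so some $p\in C$ avoids $\bigcup_{x\in M}\sp_k(x,X)$ --- is the same counting argument read in the other direction.
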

      
\begin{proof}
      Suppose that, on the contrary, 
      there exist $\zeta_p\in\gs_d(Y)$ such that $\lim_p \zeta_p\notin 
Y$ for all $p\in\om^*$.
      By virtue of Statement~\ref{ed9}, there exists
      a set $C\subset \om^*$, $|C|=2^{2^\om}$, which consists of pairwise
      $\lkr$-incomparable ultrafilters.
      Since $|C|>|M|$,  it follows that $x=\lim_p \zeta_p=\lim_q \zeta_q\in M$ 
      for some different $p,q\in C$. We have $p,q\in\sp_k(x,X)$, 
      which contradicts Proposition~\ref{ed6}.
      \end{proof}

      \section{Homogeneous Product Spaces}
      
      Given a cardinal $\tau$, a set $A$, a family of spaces $(X_\al)_{\al\in A}$, and a 
point  $(x_\al)_{\al\in A}\in \prod_{\al\in A}X_\al$, we set 
      We denote
      \begin{align*}
      \sigma_\tau \set{(X_\al,x_\al):\al\in A}&=
      \set{
      	(y_\al)_{\al\in A}\in \prod_{\al\in A}X_\al: 
      	|\set{\al\in A: x_\al\neq y_\al}|<\tau
      },
      \\
      \Sigma_\tau \set{(X_\al,x_\al):\al\in A}&=\sigma_{\tau^+} 
\set{(X_\al,x_\al):\al\in A}. 
      \end{align*}
      
      We also set 
      \[
      H_\tau(X)= \sigma_\tau \set{(X,x): (x,\al)\in X\times \tau }
      \]
for any space $X$. 
      The space $H_{\om_1}(X)$ is 
      a $\Sigma$-product in $X^{X\times\om_1}$, 
      and $H_{\om}(X)$ is 
      a $\sigma$-product in $X^{X\times\om}$.
      
      \begin{proposition}[\cite{rezn}]
\label{erezn1}
      Let $X$ be a space, and let $\tau$ be an infinite cardinal.
      Then the space $H_\tau(X)$ is homogeneous and homeomorphic to $X\times 
H_\tau(X)$.
      \end{proposition}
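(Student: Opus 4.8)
The plan is to prove both assertions at once by exhibiting explicit homeomorphisms that merely rearrange the coordinates of the index set $X\times\tau$; the whole point is that no homeomorphisms of $X$ are used, since $X$ is not assumed homogeneous. Write $e\colon X\times\tau\to X$ for the point $e(x,\al)=x$, so that $e\in H_\tau(X)$ and a point $f\in X^{X\times\tau}$ lies in $H_\tau(X)$ precisely when $S(f)=\set{a\in X\times\tau: f(a)\ne e(a)}$ has cardinality $<\tau$. For $x\in X$ I call $\sset{x}\times\tau$ the \emph{column} over $x$; every column has cardinality $\tau$, and let $\operatorname{pr}_1$ denote the projection of the index onto its $X$-coordinate. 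The only feature of the construction the argument exploits is that each column has full size $\tau$.

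For the homeomorphism $H_\tau(X)\cong X\times H_\tau(X)$ I would peel off a single coordinate. Fix an index $a_0=(x_0,0)$ and split $X\times\tau=\sset{a_0}\sqcup R$ with $R=(X\times\tau)\setminus\sset{a_0}$, giving the canonical homeomorphism $X^{X\times\tau}\cong X\times X^{R}$. Since altering one coordinate changes $S(f)$ by at most one element and $\tau$ is infinite, the constraint $|S(f)|<\tau$ decouples: the coordinate over $a_0$ is unconstrained, yielding a free factor $X$, while the $R$-part must be a point of $\sigma_\tau\set{(X,\operatorname{pr}_1(a)):a\in R}$. Equipped with the base-point map $\operatorname{pr}_1$, the set $R$ has all fibers of cardinality $\tau$ (over $x_0$ we removed one element from a column of size $\tau$, over every other $x$ the column is untouched), so there is a fiber-preserving bijection $R\to X\times\tau$, and it induces a homeomorphism of the corresponding $\sigma$-products. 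Composing these identifications yields $H_\tau(X)\cong X\times H_\tau(X)$.

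For homogeneity it suffices, since the self-homeomorphisms form a group acting transitively once they reach every point from one, to produce for an arbitrary $g\in H_\tau(X)$ a self-homeomorphism of $H_\tau(X)$ carrying $e$ to $g$. The key observation is that for every $w\in X$ the fiber $g^{-1}(w)$ has cardinality exactly $\tau$: it contains $(\sset{w}\times\tau)\setminus S(g)$, of size $\tau$ because $|S(g)|<\tau$, and it exceeds the column over $w$ only by the $<\tau$ indices of $S(g)$ on which $g$ takes the value $w$. Choosing for each $w$ a bijection of $g^{-1}(w)$ onto the column $\sset{w}\times\tau$ and assembling them, I obtain a bijection $\sigma$ of $X\times\tau$ with $\operatorname{pr}_1\circ\sigma=g$, that is, $e\circ\sigma=g$. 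The map $\Phi\colon f\mapsto f\circ\sigma$ is a self-homeomorphism of the full product $X^{X\times\tau}$, and the estimate $S(f\circ\sigma)\subseteq S(g)\cup\sigma^{-1}(S(f))$ shows that $\Phi$ and $\Phi^{-1}$ preserve $H_\tau(X)$; hence $\Phi$ restricts to a self-homeomorphism of $H_\tau(X)$ with $\Phi(e)=e\circ\sigma=g$.

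The main obstacle, and the very reason the index set carries a $\tau$-fold column over each point, is exactly this last step: because $X$ need not be homogeneous, one cannot move a coordinate value from $e(a)$ to $g(a)$ by acting with a homeomorphism of $X$. The construction circumvents this by transporting the base point $e$ to $g$ through a pure permutation of the index set, which is possible precisely because every value-fiber $g^{-1}(w)$ is as large as a full column. Everything else is the routine verification that coordinate rearrangements are homeomorphisms and that the support bounds are stable under the infinite cardinal $\tau$.
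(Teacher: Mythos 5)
Your proof is correct, and since the paper only quotes Proposition~\ref{erezn1} from \cite{rezn} without reproducing an argument, there is no in-text proof to diverge from. Your route --- decoupling a single coordinate (using that $\tau$ is infinite) for the factorization $H_\tau(X)\cong X\times H_\tau(X)$, and realizing the translation $e\mapsto g$ by a permutation of the index set $X\times\tau$ based on the observation that every fiber $g^{-1}(w)$ has cardinality exactly $\tau$ --- is precisely the standard argument this $X\times\tau$-indexed construction is designed to support, and all the support estimates you state are valid.
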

      
      \begin{proposition}[\cite{rezn}]
\label{erezn2}
      Let $X$ be a space, and let $Y=H_{\om_1}(X)$. 
      If $X$ is $p$-compact for some $p\in\om^*$, then so is $Y$.
      \end{proposition}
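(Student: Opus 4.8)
The plan is to view $Y=H_{\om_1}(X)$ as a subspace of the full power $P=X^{X\times\om_1}$ and to check that the $p$-limit of a sequence drawn from $Y$, formed coordinatewise in $P$, automatically lies in $Y$. Recall that $X$ being $p$-compact means that every $\zeta\in\gs(X)$ has a $p$-limit $\lim_p\zeta\in X$ (unique, since $X$ is Hausdorff). Write $A=X\times\om_1$ for the index set, let $b=(b_\al)_{\al\in A}\in P$ be the base point (so $b_{(x,\beta)}=x$), let $\pi_\al\colon P\to X$ denote the $\al$th projection, and for $z\in P$ let $S(z)=\{\al\in A:\pi_\al(z)\ne b_\al\}$ be its support. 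By definition $Y=\{z\in P:|S(z)|\le\om\}$, equipped with the subspace topology from $P$.

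First I would record that $p$-compactness is productive, so $P$ is $p$-compact. Concretely, given $\zeta=(z_n)_{n\in\om}\in\gs(P)$, set $y_\al=\lim_p(\pi_\al(z_n))_{n\in\om}\in X$ for each $\al\in A$ (this exists because $X$ is $p$-compact), and let $y=(y_\al)_{\al\in A}$. Then $y=\lim_p\zeta$ in $P$: any basic neighborhood of $y$ has the form $\bigcap_{i=1}^k\pi_{\al_i}^{-1}(U_i)$ with $y_{\al_i}\in U_i$, and $\{n:z_n\in\bigcap_i\pi_{\al_i}^{-1}(U_i)\}=\bigcap_{i=1}^k\{n:\pi_{\al_i}(z_n)\in U_i\}$ belongs to $p$, being a finite intersection of members of $p$.

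The crux is to show that if $\zeta=(z_n)_{n\in\om}$ lies in $Y$, then $y\in Y$, i.e.\ $y$ has countable support. I claim $S(y)\subseteq\bigcup_{n\in\om}S(z_n)$. Indeed, if $\al\notin\bigcup_nS(z_n)$, then $\pi_\al(z_n)=b_\al$ for every $n$, so the sequence $(\pi_\al(z_n))_{n}$ is constant and its $p$-limit is $b_\al$; hence $y_\al=b_\al$ and $\al\notin S(y)$. Since each $z_n\in Y$ satisfies $|S(z_n)|\le\om$, the set $\bigcup_nS(z_n)$ is a countable union of countable sets, hence countable, so $|S(y)|\le\om$ and $y\in Y$.

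Finally, because $Y$ carries the subspace topology from $P$ and $y\in Y$ is the $p$-limit of $\zeta$ in $P$, it is also the $p$-limit of $\zeta$ in $Y$; as $\zeta\in\gs(Y)$ was arbitrary, $Y$ is $p$-compact. The only step needing genuine care is the support inclusion $S(y)\subseteq\bigcup_nS(z_n)$, which guarantees that the limit does not escape the $\Sigma$-product $Y$; the rest is the routine coordinatewise behaviour of $p$-limits under projections.
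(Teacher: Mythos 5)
Your proof is correct and complete: the coordinatewise computation of the $p$-limit in $P=X^{X\times\om_1}$, the support inclusion $S(y)\subseteq\bigcup_{n}S(z_n)$ (which is the one genuinely non-routine point, and you justify it properly via the $p$-limit of a constant sequence being that constant), and the final passage to the subspace topology are all sound. The paper gives no proof of Proposition~\ref{erezn2} — it is quoted from \cite{rezn} — but your argument is the standard one showing that $\Sigma$-products preserve $p$-compactness, and it is exactly the expected route.
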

      
      \begin{proposition}\label{erezn3}
      Let $X$ be a space, and let $Y=H_{\om_1}(X)$.
      If $X$ is sequentially $p$-compact for some  $p\in\om^*$, then so is~$Y$.
      \end{proposition}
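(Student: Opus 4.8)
The plan is to reduce sequential $p$-compactness of the $\Sigma$-product $Y=H_{\om_1}(X)$ to sequential $p$-compactness of a countable power $X^\om$, which is exactly what Proposition~\ref{cpscp} supplies. Write $A=X\times\om_1$ for the index set and let $b=(b_\al)_{\al\in A}\in X^A$ be the base point of $Y$, so that $b_{(x,\al)}=x$; for $y\in Y$ put $S(y)=\set{\al\in A: y_\al\neq b_\al}$. By the definition of $H_{\om_1}(X)$ as $\sigma_{\om_1}\set{(X,x):(x,\al)\in X\times\om_1}$, every such support $S(y)$ is countable. The whole difficulty is concentrated in exploiting this countability of supports to pass to a countable subproduct.

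First I would fix an infinite $M\subset Y$ and choose inside it a countably infinite set $M_0=\set{y^{(k)}:k\in\om}$ of pairwise distinct points. Since each $S(y^{(k)})$ is countable, the union $S=\bigcup_{k\in\om}S(y^{(k)})$ is a countable subset of $A$, which I may enlarge so as to assume it is countably infinite (this does not disturb the containments below). Every point of $M_0$ then lies in the slice
\[
Z=\set{y\in X^A: y_\al=b_\al \text{ for all } \al\notin S},
\]
and the projection $\pi_S$ maps $Z$ homeomorphically onto $X^S\cong X^\om$. Note that $Z\subset Y$, since each $y\in Z$ has $S(y)\subset S$ countable, and that the subspace topology $Z$ inherits from $Y$ agrees with the one it inherits from $X^A$.

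Next I would transport the problem through this homeomorphism. The image $\pi_S(M_0)$ is an infinite subset of $X^S$, and since $X^S\cong X^\om$ is sequentially $p$-compact by Proposition~\ref{cpscp}, there is an infinite set $L'\subset\pi_S(M_0)$ such that every exact sequence contained in $L'$ has a $p$-limit in $X^S$. Put $L=(\pi_S\restriction_Z)^{-1}(L')\subset M_0\subset M$; this is an infinite subset of $M$, and since a homeomorphism carries exact sequences to exact sequences and preserves $p$-limits, every exact sequence in $L$ has a $p$-limit in $Z$.

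The step that requires care---and is the real heart of the argument---is checking that a $p$-limit taken in the slice $Z$ is genuinely a $p$-limit in $Y$. Given an exact sequence $\zeta=(z_n)_{n\in\om}\subset L$, its $p$-limit $z$ in $Z\cong X^S$ is computed coordinatewise: for $\al\in S$ the coordinate $z_\al$ is the $p$-limit of $((z_n)_\al)_{n\in\om}$ in $X$, while for $\al\notin S$ each $(z_n)_\al$ equals $b_\al$, so $z_\al=b_\al$. Hence $S(z)\subset S$ is countable and $z\in Y$; moreover $z$ is the coordinatewise $p$-limit of $\zeta$ in $X^A$, and therefore, $Y$ carrying the subspace topology, $z=\lim_p\zeta$ in $Y$ as well. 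Thus $L$ witnesses the defining property, and $Y$ is sequentially $p$-compact. Beyond the bookkeeping of supports, the only genuine point is this last identification, which rests on the facts that $p$-limits in a product are computed coordinatewise and that the $\Sigma$-product is closed under $p$-limits of sequences whose terms are supported in a fixed countable set.
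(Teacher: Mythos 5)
Your proof is correct and takes essentially the same route as the paper's: both embed the chosen countable subset of $Y$ in a copy of $X^\om$ inside $Y$ (your slice $Z$ is exactly the set $F$ the paper asserts to exist) and then invoke Proposition~\ref{cpscp}. You merely make explicit the support bookkeeping and the transfer of $p$-limits from the subproduct to $Y$, which the paper leaves implicit.
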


      \begin{proof}
      Let $M\subset Y$, $|M|=\om$. There exists a closed set 
$F\subset Y$ such that $M\subset F$ and $F$ is homeomorphic to $X^\om$.
      Proposition~\ref{cpscp} implies  the sequential $p$-compactness of $F$.
      Hence there exists an infinite set $L\subset M$
      such that any exact sequence $(x_n)_{n\in\om}\subset L$
      has a $p$-limit in $F\subset Y$.
      \end{proof}

      \begin{proposition}\label{npscp}
      For any $p\in\om^*$, there exist extremally disconnected spaces  
$X$ and $Y$ such that $X$ is $p$-compact,
       $Y$ is sequentially $q$-compact for some $q\in\om^*$, 
       and $X\times Y$ is not pseudocompact.
      \end{proposition}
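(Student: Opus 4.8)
The plan is to realize $X$ and $Y$ as two dense subspaces of $\be\om$ that meet only in $\om$, in the spirit of the classical Nov\'ak-type example of two countably compact spaces with non-pseudocompact product, but with the two ``halves'' tuned so that one is $p$-compact and the other is sequentially $q$-compact. Since any dense subspace of $\be\om$ is extremally disconnected by Statement~\ref{ed1}, working inside $\be\om$ will automatically take care of the \ed/ requirement, and sharing only the discrete set $\om$ will be what breaks pseudocompactness of the product.

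First I would construct $X$ as the smallest $p$-compact subspace of $\be\om$ containing $\om$. Concretely, set $S_0=\om$ and $S_{\al+1}=S_\al\cup\{\lim_p\zeta:\zeta\in S_\al^\om\}$ (these $p$-limits exist because $\be\om$ is compact), taking unions at limit stages, and put $X=\bigcup_{\al<\om_1}S_\al$. Because $\om_1$ is regular, every countable sequence from $X$ already lies in a single $S_\al$ with $\al<\om_1$, so $X$ is closed under $p$-limits, i.e.\ $p$-compact, and the iteration stabilizes by stage $\om_1$. Since $|S_{\al+1}|\le|S_\al|^\om$, an easy induction gives $|X|\le 2^\om$. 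As $\om\subset X\subset\be\om$, the subspace $X$ is dense in $\be\om$ and hence \ed/ by Statement~\ref{ed1}.

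Next I would produce $Y$. Put $M=X\cap\om^*$, so that $|M|\le 2^\om<2^{2^\om}$. Applying Proposition~\ref{cedcs} to the compact $\be\om$ space $\be\om$ (it is a $\be\om$ space, being an F-space) and the small set $M$ yields some $q\in\om^*$ for which $Y=\be\om\setminus M$ is discrete $q$-compact; Proposition~\ref{dsps} then makes $Y$ sequentially $q$-compact. Since $M\subset\om^*$, we have $\om\subset Y$, so $Y$ is dense in $\be\om$ and therefore \ed/ by Statement~\ref{ed1}. By construction $X\cap Y=X\setminus M=\om$, so the sets $X\cap\om^*$ and $Y\cap\om^*$ are disjoint.

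Finally I would verify that $X\times Y$ is not pseudocompact. Each $n\in\om$ is isolated in $\be\om$, so $\zeta=(\sset n)_{n\in\om}$ is a sequence of nonempty open sets in both $X$ and $Y$. Identifying $\zeta$ with the ranges of the standard sequence $(n)_{n\in\om}$ and using $\lim_r(n)_{n\in\om}=r$ in $\be\om$, one computes $u_X(\zeta)=X\cap\om^*$ and $u_Y(\zeta)=Y\cap\om^*$, which are disjoint. By Statement~\ref{ss1} the family $(\sset n\times\sset n)_{n\in\om}$ of nonempty open subsets of $X\times Y$ is then locally finite, i.e.\ its $u$-set is empty, so Proposition~\ref{p1p}(a) yields that $X\times Y$ is not pseudocompact. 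The main obstacle is the first step: guaranteeing that $X$ is genuinely $p$-compact (not merely sequentially $p$-compact) while keeping $|X|<2^{2^\om}$, which is exactly what allows Proposition~\ref{cedcs} to be applied to $Y$. This rests on the fact that the $p$-closure only adjoins $p$-limits of countable sequences, so both the length of the iteration ($\om_1$) and the size of the outcome ($2^\om$) remain controlled.
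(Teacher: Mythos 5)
Your proof is correct and follows essentially the same route as the paper's: take $X$ to be a minimal $p$-compact subspace of $\be\om$ containing $\om$ (so $|X|\le 2^\om$), set $Y=\om\cup(\be\om\setminus X)$, apply Proposition~\ref{cedcs} (together with Proposition~\ref{dsps}) to get sequential $q$-compactness of $Y$, and use $X\cap Y=\om$ to kill pseudocompactness of the product. You merely make explicit the details (the transfinite closure argument for $|X|\le2^\om$, density in $\be\om$ giving extremal disconnectedness, and the $u$-set computation) that the paper leaves to the reader.
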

      
\begin{proof}
      Let $X$ be a minimal $p$-compact subspace of $\be\om$ 
      containing $\om$. Then $|X|\leq 2^\om$. We set 
$Y=\om\cup(\be\om\setminus X)$. Since $X\cap Y=\om$, it follows that $X\times Y$ is not 
pseudocompact, and since $|\beta\om\setminus Y|\le|X|\le 2^\om$, it follows by 
Proposition~\ref{cedcs}
      that $Y$ is sequentially $q$-compact for some 
$q\in\om^*$.
      \end{proof}

      \begin{theorem}\label{nphscp}
      For any $p\in\om^*$, there exist homogeneous spaces $X$ and $Y$ such that $X$ is $p$-compact,
      $Y$ is sequentially $q$-compact for some $q\in\om^*$, and 
$X\times Y$ is not pseudocompact.
      Moreover, the space $X^\tau$ is countably compact for any $\tau$
      and $Y^\om$ is countably compact.
      \end{theorem}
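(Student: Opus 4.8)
The plan is to assemble the two building blocks of the previous section. First I would invoke Proposition~\ref{npscp} to obtain extremally disconnected spaces $X_0$ and $Y_0$ with $X_0$ being $p$-compact, $Y_0$ being sequentially $q$-compact for some $q\in\om^*$, and $X_0\times Y_0$ not pseudocompact. I then set $X=H_{\om_1}(X_0)$ and $Y=H_{\om_1}(Y_0)$, and the claim is that this pair witnesses the theorem. Homogeneity of both $X$ and $Y$ is immediate from Proposition~\ref{erezn1}.

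The compactness properties of the factors transfer directly to the new spaces: $X$ is $p$-compact by Proposition~\ref{erezn2} applied to $X_0$, and $Y$ is sequentially $q$-compact by Proposition~\ref{erezn3} applied to $Y_0$. For the two power statements I would argue as follows. Since a $p$-limit in a product is computed coordinatewise (a finite intersection of members of the ultrafilter $p$ again lies in $p$), every power of a $p$-compact space is again $p$-compact; and a $p$-compact space is countably compact because a $p$-limit of a sequence is in particular a cluster point. Hence $X^\tau$ is countably compact for every $\tau$. For $Y^\om$, Proposition~\ref{cpscp} yields that $Y^\om$ is sequentially $q$-compact, and sequential $q$-compactness implies countable compactness: any infinite subset $M$ contains an infinite $L\subset M$ all of whose exact enumerations have a $q$-limit, and such a $q$-limit is an accumulation point of $M$.

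The one step requiring a little care is the non-pseudocompactness of $X\times Y$, which I expect to be the main obstacle. The naive attempt --- viewing $X_0\times Y_0$ as a closed subspace of $X\times Y$ --- does not suffice, since a closed subspace of a pseudocompact space need not be pseudocompact. Instead I would use the homeomorphisms $X\cong X_0\times X$ and $Y\cong Y_0\times Y$ supplied by Proposition~\ref{erezn1}: composing with the first projections produces a continuous surjection $X\times Y\to X_0\times Y_0$. Because a continuous image of a pseudocompact space is pseudocompact, if $X\times Y$ were pseudocompact then $X_0\times Y_0$ would be pseudocompact, contradicting Proposition~\ref{npscp}. Therefore $X\times Y$ is not pseudocompact, and all the required properties are established.
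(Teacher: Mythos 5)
Your proposal is correct and follows essentially the same route as the paper: obtain $X_0,Y_0$ from Proposition~\ref{npscp}, pass to $H_{\om_1}$, and use Propositions~\ref{erezn1}, \ref{erezn2}, \ref{erezn3}, and \ref{cpscp}, with non-pseudocompactness of $X\times Y$ coming from the continuous surjection onto $X_0\times Y_0$ given by the homeomorphisms $X\cong X_0\times X$ and $Y\cong Y_0\times Y$. You merely spell out two routine steps the paper leaves implicit (that all powers of a $p$-compact space are $p$-compact, hence countably compact, and that sequential $q$-compactness implies countable compactness).
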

      
\begin{proof}
      By virtue of Proposition~\ref{npscp}, 
      there exist extremally disconnected spaces $X'$ and $Y'$ such that $X'$ 
is $p$-compact,
      ${Y'}$ is sequentially $q$-compact for some $q\in\om^*$, and 
$X'\times Y'$ is not pseudocompact.
      We set $X=H_{\omega_1}(X')$ and $Y=H_{\omega_1}(Y')$.
      According to Proposition~\ref{erezn1}, the spaces $X$ and $Y$ are homogeneous 
and $X'\times Y'$ is a continuous image of $X\times Y$. 
Therefore, $X\times Y$ is not pseudocompact. By Propositions~\ref{erezn2} and~\ref{erezn3} 
the space $X$ is $p$-compact and ${Y}$ is sequentially $q$-compact, 
      and by Proposition~\ref{cpscp} the product space $Y^\om$ is countably compact.
      \end{proof}

      \section{Maximally Homogeneous Spaces}
      
      Given a space $X$, we denote  the group 
      of self-homeomorphisms of $X$ by $\aut X$ and set 
      \[
      H(X)=\set{g(x): x\in X, g\in \aut{\be X}}.
      \]
      
      The following assertion is a direct consequence of these definitions. 
      
      \begin{assertion}\label{mh1}
      If $X$ is a space, then $H(H(X))=H(X)$. 
      If $X$ is a  homogeneous space, then so is $H(X)$.
      \end{assertion}
      
      \begin{definition}\label{defmaxhom}
      We say that a space $X$ is {\it maximally homogeneous} if $X$ 
is  homogeneous and $H(X)=X$.
      \end{definition}
      
      This definition immediately implies the following assertion. 
      
      \begin{assertion}\label{mh2}
      If $X$ is a homogeneous space, then $H(X)$ is maximally homogeneous. 
      \end{assertion}

      \begin{assertion}\label{mh3p1}
      If $X$ is an extremally disconnected space,
      $U,V\subset X$ are non\-empty disjoint open subsets of $X$,
      and $f\colon U\to V$ is a homeomorphism, then there exists an $\tilde f\in \aut{\be X}$ for 
which
      $\tilde f\restriction_{U}=f$, $\tilde f\restriction_{V}=f^{-1}$, 
      and $\tilde f\restriction_{S}=\id_S$, where $S=\be X\setminus (\cl U\cap \cl 
V)$.
      \end{assertion}
      \begin{proof}
       We set $W=U\cup V$. Consider the homeomorphism $g\colon W\to W$ defined by 
      \[
      g(x) = \begin{cases}
      f(x)&\text{if $x\in U$,}
      \\
      f^{-1}(x)&\text{if $x\in V$.}
      \end{cases}
      \]
      Let $\tilde g\colon \be W\to \be W$ be the continuous extension of $g$, and let 
      $\tilde f\colon  \be X\to \be X$ be the homeomorphism defined by  
      \[
      \tilde f(x) = \begin{cases}
       \tilde g(x) &\text{if $x\in \be W$,}
      \\
      x &\text{if $x\in \be X \setminus \be W$.}
      \end{cases}
      \]
      \end{proof}
      
      % 2017.11.30
      Statement~\ref{mh3p1} has the following corollary. 
      
      % 2017.11.30
      \begin{cor}\label{mh3p2}
      If $X$ is a maximally homogeneous extremally disconnected space,
      $U,V\subset X$ are nonempty disjoint open subsets of $X$,
      and $f\colon U\to V$ is a homeomorphism, then there exists an $\tilde f\in \aut X$ for 
which
      $\tilde f\restriction_{U}=f$, $\tilde f\restriction_{V}=f^{-1}$,
      and $\tilde f\restriction_{S}=\id_S$, where $S= X\setminus (\cl U\cap \cl 
V)$.
      \end{cor}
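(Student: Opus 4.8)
The plan is to deduce this corollary directly from Statement~\ref{mh3p1} by exploiting the hypothesis $H(X)=X$. First I would apply Statement~\ref{mh3p1} to the very same data $X,U,V,f$ to obtain an autohomeomorphism $\tilde f\in\aut{\be X}$ of the Stone--\v{C}ech compactification satisfying $\tilde f\restriction_U=f$, $\tilde f\restriction_V=f^{-1}$, and $\tilde f\restriction_{S'}=\id_{S'}$, where $S'=\be X\setminus(\clx{\be X}U\cap\clx{\be X}V)$ and the closures here are taken in $\be X$. All the substantive construction is thereby already in hand; what remains is to push $\tilde f$ down to $X$ and to reconcile the two ambient spaces in which closures are computed.

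The key step is to restrict $\tilde f$ to $X$. Since $X$ is maximally homogeneous, $H(X)=X$ by Definition~\ref{defmaxhom}, that is, $\set{g(x):x\in X,\ g\in\aut{\be X}}=X$. In particular $\tilde f(x)\in X$ for every $x\in X$, so $\tilde f(X)\subseteq X$; applying the same observation to $\tilde f^{-1}\in\aut{\be X}$ gives $\tilde f^{-1}(X)\subseteq X$. Hence $\tilde f$ carries $X$ bijectively onto $X$, and being the restriction of a homeomorphism to an invariant subspace it is itself a self-homeomorphism; thus $\tilde f\restriction_X\in\aut X$, and this restriction is the automorphism claimed in the corollary.

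Finally I would verify the three restriction conditions. The conditions on $U$ and $V$ are inherited verbatim from Statement~\ref{mh3p1}. For the identity condition, the only care needed is the bookkeeping between closures: since closure in the subspace $X$ is the trace of closure in $\be X$, one has $\clx XU=\clx{\be X}U\cap X$ and $\clx XV=\clx{\be X}V\cap X$, whence $S=X\setminus(\clx XU\cap\clx XV)=X\cap S'\subseteq S'$. As $\tilde f$ fixes $S'$ pointwise, it fixes $S$ pointwise as well, completing the argument.

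I expect the only obstacle to be this last reconciliation of closures in $X$ versus in $\be X$; the genuine content of the statement is already supplied by Statement~\ref{mh3p1}, and the role of maximal homogeneity is precisely to guarantee that the resulting $\be X$-automorphism leaves $X$ invariant, so that the $\be X$-level statement descends to an honest element of $\aut X$.
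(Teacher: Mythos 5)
Your argument is correct and is precisely the (implicit) derivation the paper intends: the paper offers no written proof, merely noting that the statement follows from Statement~\ref{mh3p1}, and your route --- obtain $\tilde f\in\aut{\be X}$ from that statement, use $H(X)=X$ to see that $\tilde f$ and $\tilde f^{-1}$ both map $X$ into $X$ so that $\tilde f\restriction_X\in\aut X$, and then trace the $\be X$-closures down to $X$-closures to identify $S=X\cap S'$ --- is exactly the intended one. Nothing to add.
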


      \begin{assertion}\label{mh3}
      Let $X$ be a maximally homogeneous nondiscrete extremally disconnected 
space, and let $(x_n)_{n\in\om},(y_n)_{n\in\om}\in \gs_d(X)$.
      Then there exists an $f\in\aut X$ such that $f(x_n)=y_n$ for all $n\in\om$.
      \end{assertion}
      
\begin{proof}
      There exists a nonempty clopen set $O\subset X$ such that
      \[
      O\cap \cl{(x_n)_{n\in\om}}=O\cap \cl{(y_n)_{n\in\om}}=\es.
      \]
      Take $(z_n)_{n\in\om}\in \gs_d(X)$ for which $(z_n)_{n\in\om}\subset 
O$.
      
      Let us show that there exists a $g\in\aut X$ such that $g(x_n)=z_n$ for all 
$n\in\om$.
      For each $n\in\om$, take $g_n\in\aut X$ taking $x_n$ to $z_n$.
      There exist sequences $(U_n)_{n\in\om},(V_n)_{n\in\om}\in\go_d(X)$ satisfying the 
conditions $g_n(U_n)=V_n$, $x_n\in U_n\subset X\setminus W$, and $z_n\in V_n\subset 
W$ for each $n\in\om$. We set $U=\bigcup_{n\in\om}U_n$ and 
$V=\bigcup_{V_n\in\om}U_n$ and consider the homeomorphism $g'\colon U\to V$ defined by
      $g'(x)=g_n(x)$ if $x\in U_n$, $n\in\om$.
      Corollary~\ref{mh3p2} implies the existence of a $g\in \aut X$ for which
      $g\restriction_{U}=g'$, $g\restriction_{V}={g'}^{-1}$, 
      and $g \restriction_{S}=\id_S$, where $S= X\setminus (\cl U\cap \cl V)$.
      
      Similarly, there exists an $h\in\aut X$ such that 
$h(y_n)=z_n$ for all $n\in\om$. 
      It remains to set $f=h^{-1}\circ g$. 
      \end{proof}

      \begin{cor}\label{mh4}
      Let $X$ be a maximally homogeneous extremally disconnected space. 
Then $u(\zeta)=u(\rho)$ for all $\zeta,\rho\in \gs_d(X)$.
      \end{cor}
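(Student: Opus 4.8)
The plan is to reduce everything to Statement~\ref{mh3} together with the elementary observation that $u(\cdot)$ is invariant under self-homeomorphisms of $X$. Recall (as noted after the definitions) that $p\in u_x(\zeta)$ precisely when $x=\lim_p\zeta$, so $u(\zeta)=u_X(\zeta)$ is simply the set of $p\in\om^*$ for which $\zeta$ has a $p$-limit in $X$. Our target equality $u(\zeta)=u(\rho)$ thus says that two discrete exact sequences admit $p$-limits for exactly the same ultrafilters $p$.

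First I would dispose of the degenerate case in which $X$ is discrete. Then for any exact $\zeta=(x_n)_{n\in\om}$ and any $x$, the singleton $\sset x$ is a neighborhood of $x$, and $\set{n\in\om:x_n\in\sset x}$ has at most one element by exactness, hence lies in no free ultrafilter; therefore $u_x(\zeta)=\es$ for every $x$, giving $u(\zeta)=\es$ for all $\zeta\in\gs_d(X)$, and the claim is trivial. So we may assume $X$ is nondiscrete, which is exactly the hypothesis required by Statement~\ref{mh3}.

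Next I would record the transport of $u$ along a self-homeomorphism. Fix $f\in\aut X$ and write $f\circ\zeta=(f(x_n))_{n\in\om}$. Since both $f$ and $f^{-1}$ are continuous, $x=\lim_p\zeta$ holds if and only if $f(x)=\lim_p(f\circ\zeta)$; rephrased in the $u$-notation, $u_{f(x)}(f\circ\zeta)=u_x(\zeta)$ for every $x\in X$. Taking the union over all $x$ and using that $f$ is a bijection of $X$ yields $u(f\circ\zeta)=u(\zeta)$. Now given $\zeta=(x_n)_{n\in\om}$ and $\rho=(y_n)_{n\in\om}$ in $\gs_d(X)$, Statement~\ref{mh3} furnishes an $f\in\aut X$ with $f(x_n)=y_n$ for all $n$, that is, $f\circ\zeta=\rho$; combining this with the invariance just established gives $u(\rho)=u(f\circ\zeta)=u(\zeta)$, as desired.

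The substantive content is entirely packaged inside Statement~\ref{mh3}, which supplies the automorphism carrying one discrete exact sequence pointwise onto another; once that is granted, the remainder is only the formal check that $p$-limits, and hence $u$, transport along homeomorphisms. The one point to be careful about is that Statement~\ref{mh3} is stated only for \emph{nondiscrete} $X$, which is why the trivial discrete case must be handled separately for completeness.
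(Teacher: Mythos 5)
Your proof is correct and follows exactly the route the paper intends: Corollary~\ref{mh4} is stated there without proof as an immediate consequence of Statement~\ref{mh3}, via the observation that $u(\cdot)$ is invariant under self-homeomorphisms. Your explicit handling of the discrete case (where every $u(\zeta)$ is empty) is a small but worthwhile addition, since Statement~\ref{mh3} is stated only for nondiscrete $X$.
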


      \begin{proposition}\label{mh5}
      Let $X$ be a maximally homogeneous extremally disconnected space
      containing a countable nonclosed discrete subspace.
      Then $X$ is discretely $p$-compact for some free 
ultrafilter $p\in \om^*$.
      \end{proposition}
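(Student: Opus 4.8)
The plan is to reduce the statement, via Corollary~\ref{mh4}, to exhibiting a single discrete exact sequence that possesses a $p$-limit for some free $p$. Recall that $X$ is discretely $p$-compact precisely when every $\zeta\in\gs_d(X)$ has a $p$-limit, i.e.\ when $p\in u(\zeta)$ for all $\zeta\in\gs_d(X)$. But Corollary~\ref{mh4} asserts that the set $u(\zeta)$ does not depend on the choice of $\zeta\in\gs_d(X)$; call this common set $P$. Hence $X$ is discretely $p$-compact for every $p\in P$, and it suffices to prove that $P\neq\es$, equivalently that $u(\zeta)\neq\es$ for just one $\zeta\in\gs_d(X)$. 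Note that the hypotheses of maximal homogeneity and extremal disconnectedness enter only through Corollary~\ref{mh4}; no further appeal to them is needed.

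To produce such a $\zeta$, I would invoke the hypothesis that $X$ contains a countable nonclosed discrete subspace $D$. Enumerating $D$ injectively gives a sequence $\zeta=(x_n)_{n\in\om}\in\gs_d(X)$ with $D=\set{x_n: n\in\om}$. Since $D$ is not closed, fix a point $x\in\cl D\setminus D$. I would then verify that for every neighborhood $U$ of $x$ the set $A_U=\set{n\in\om: x_n\in U}$ is infinite: if $A_U$ were finite, then $U\cap D$ would be a finite set not containing $x$, so by the $T_1$ property one could shrink $U$ to a neighborhood of $x$ disjoint from $D$, contradicting $x\in\cl D$.

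Finally I would build the ultrafilter. As $U$ ranges over the neighborhoods of $x$, the sets $A_U$ form a filter base of infinite subsets of $\om$, since $A_{U_1\cap U_2}\subset A_{U_1}\cap A_{U_2}$. Together with the cofinite (Fr\'echet) filter this family has the finite intersection property: any finite intersection of sets $A_U$ contains some infinite $A_{U'}$, and deleting finitely many points from an infinite set leaves it infinite. Extending to an ultrafilter therefore yields a free $p\in\om^*$ with $A_U\in p$ for every neighborhood $U$ of $x$, that is, $p\in u_x(\zeta)\subset u(\zeta)$. Hence $u(\zeta)\neq\es$, so $P\neq\es$, and $X$ is discretely $p$-compact for this $p$. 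The substance of the argument lies in the reduction of the first paragraph; the only delicate point is the requirement that $p$ be free, which is exactly why the cofinite filter is adjoined before passing to the ultrafilter.
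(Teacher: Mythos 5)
Your proposal is correct and follows the same route as the paper: both reduce the claim via Corollary~\ref{mh4} to showing $u(\zeta)\neq\es$ for a single nonclosed discrete sequence $\zeta$, which the paper asserts without elaboration and you verify in detail by tracing the neighborhood filter of an accumulation point onto $\om$ and extending to a free ultrafilter.
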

      
\begin{proof}
      Let $\zeta\in\gs_d(X)$ be a nonclosed discrete sequence.
      Then $u(\zeta)\neq\es$. Take $p\in u(\zeta)$.
      Let us show that $X$ is discretely $p$-compact.
      If $\rho\in\gs_d(X)$, then $u(\rho)=u(\zeta)\ni p$ by 
      Corollary~\ref{mh4}. Therefore, 
$\rho$ has a $p$-limit in  $X$.
      \end{proof}
      
      Propositions~\ref{mh5} and~\ref{cpscp} imply  the following  assertion. 
      
      \begin{cor}\label{mh6}
      Let $X$ be a countably compact maximally homogeneous extremally 
disconnected space. Then $X^\omega$ is countably compact.
      \end{cor}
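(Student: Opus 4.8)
The plan is to run the chain of implications Proposition~\ref{mh5} $\Rightarrow$ Proposition~\ref{dsps} $\Rightarrow$ Proposition~\ref{cpscp}, after first disposing of the degenerate case. If $X$ is finite, then, being completely regular and Hausdorff, it is discrete, so $X^\om$ is a product of compact spaces and hence compact, in particular countably compact. So I would assume from now on that $X$ is infinite.

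For infinite $X$ the target is the hypothesis of Proposition~\ref{mh5}: that $X$ contains a countable nonclosed discrete subspace. Once that is in hand, Proposition~\ref{mh5} yields a free ultrafilter $p\in\om^*$ for which $X$ is discretely $p$-compact; Proposition~\ref{dsps} upgrades this to sequential $p$-compactness of $X$; and Proposition~\ref{cpscp} then gives that $X^\om$ is sequentially $p$-compact. To finish I would note that sequential $p$-compactness implies countable compactness: any infinite $M\subset X^\om$ contains an infinite $L\subset M$ in which every exact sequence has a $p$-limit, and such a limit is an accumulation point of $M$; thus $X^\om$ is countably compact, as required.

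Everything above is bookkeeping on top of the already-established propositions, so the one genuine step --- and the main obstacle --- is producing the countable nonclosed discrete subspace. Here I would first invoke the standard fact that every infinite Hausdorff space contains a countably infinite discrete subspace $D=\set{x_n:n\in\om}$. This is proved by a routine induction in which one keeps an infinite open ``reservoir'' disjoint from the neighborhoods already chosen, using at each stage that an infinite Hausdorff space has a point $x$ with an open neighborhood $W$ whose complement still contains an infinite open set $R'$ (if some point is non-isolated, separate it from any other point and use that every neighborhood of a non-isolated point is infinite; if the space is discrete, split off a single point). Since $X$ is countably compact, the infinite set $D$ has an $\om$-accumulation point $z$; as $D$ is discrete, $z\notin D$, so $z\in\clx XD\setminus D$ and $D$ is nonclosed. (Alternatively, homogeneity forces $X$ to be crowded once it is infinite and countably compact, but this is not needed.) This furnishes exactly the subspace required to start the chain, completing the argument.
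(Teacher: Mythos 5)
Your proof is correct and follows essentially the same route as the paper, which simply states that the corollary follows from Propositions~\ref{mh5} and~\ref{cpscp}; you supply exactly the implicit links (the finite case, the existence of a countable nonclosed discrete subspace in an infinite countably compact space, the bridge via Proposition~\ref{dsps}, and the observation that sequential $p$-compactness implies countable compactness). Nothing in your argument deviates from the intended chain, and the details you fill in are sound.
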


      \begin{lemma}\label{mh7l1}
      Let $X$ be an extremally disconnected space
      which is not a $P$-space. Then 
      there exists a  point $x_*\in X$ and a sequence $(O_n)_{n\in\om}\go(X)$ such that $O_n$ 
is clopen and  $x_*\in O_{n+1}\subset O_n$ for each $n\in\om$
      and the set $\bigcap_{n\in\om}O_n$ is nowhere dense in~$X$.
      \end{lemma}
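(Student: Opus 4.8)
The plan is to localize the failure of the $P$-space property at a single point and then to use extremal disconnectedness twice: once to pass to a clopen base, and once to excise the interior of the limiting intersection.

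First I would invoke the hypothesis. Since $X$ is not a $P$-space, some $G_\delta$ subset fails to be open: there are a point $x_*\in X$ and open sets $(U_n)_{n\in\om}$ with $x_*\in\bigcap_{n\in\om}U_n$ yet $x_*\notin\int{\bigcap_{n\in\om}U_n}$. Passing to the partial intersections $U_0\cap\dots\cap U_n$, I may take the $U_n$ decreasing. A completely regular \ed/ space is zero-dimensional---the closure of every open set is clopen, so the clopen sets form a base---hence I can choose clopen $O_n$ with $x_*\in O_n\subset U_n$ and, after replacing $O_n$ by $O_0\cap\dots\cap O_n$, arrange that $(O_n)_{n\in\om}$ is a decreasing sequence of clopen sets through $x_*$. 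Writing $P=\bigcap_{n\in\om}O_n$, we get $P\subset\bigcap_{n\in\om}U_n$, so $P$ is still not a neighborhood of $x_*$, i.e.\ $x_*\notin\int P$.

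The remaining difficulty is that $P$ is closed but its interior $W=\int P$ may be nonempty somewhere away from $x_*$, so $P$ need not yet be nowhere dense. This is where extremal disconnectedness does the real work a second time: as $W$ is open, $\cl W$ is clopen. I claim $x_*\notin\cl W$. Indeed, were $x_*\in\cl W$, then $\cl W$ would be an \emph{open} neighborhood of $x_*$; and since $W\subset P$ with $P$ closed forces $\cl W\subset P$, this open neighborhood would lie in $P$, giving $x_*\in\int P=W$ and contradicting $x_*\notin W$.

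To conclude I would replace each $O_n$ by $O_n'=O_n\cap(X\setminus\cl W)$. Being the intersection of two clopen sets, each $O_n'$ is clopen; the sequence is still decreasing; and $x_*\in O_n'$ because $x_*\in O_n$ while $x_*\notin\cl W$. Finally $\bigcap_{n\in\om}O_n'=P\setminus\cl W=P\cap(X\setminus\cl W)$, and since the interior commutes with finite intersections and $X\setminus\cl W$ is open, $\int{P\setminus\cl W}=\int P\cap(X\setminus\cl W)=W\cap(X\setminus\cl W)=\es$. Hence $\bigcap_{n\in\om}O_n'$ is closed with empty interior, i.e.\ nowhere dense, and the $O_n'$ (renamed $O_n$) together with $x_*$ furnish exactly the required data. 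The one genuinely delicate step is the excision above: a priori the limiting intersection can have interior, and it is precisely the \ed/ property---making $\cl W$ clopen, and therefore an open neighborhood of each of its points---that permits deleting this interior without expelling $x_*$ from any $O_n$; everything else is routine bookkeeping with finite Boolean combinations of clopen sets.
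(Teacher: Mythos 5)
Your proof is correct and follows essentially the same route as the paper: take a decreasing sequence of clopen neighborhoods of a point $x_*$ witnessing the failure of the $P$-space property, then use extremal disconnectedness to excise the (clopen) interior of the intersection without losing $x_*$. The only cosmetic difference is that you remove $\cl{\int P}$ while the paper removes $\int F$ directly, but in an \ed/ space these coincide for closed $F$, so the arguments are the same.
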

      
\begin{proof}
      There exists a  point $x_*\in X$ and a sequence $(U_n)_{n\in\om}\go(X)$ such that $x_*\in 
U_{n+1}\subset subset O_n$ and $U_n$ is clopen for each 
      $n\in\om$ and $x_*\notin \int F$, where $F=\bigcap_{n\in\om}O_n$.
      The set $F$ is closed, and since $X$ is extremally disconnected, it follows that  
      $\int F$ is clopen. It remains to set $O_n=U_n\setminus \int 
F$.
      \end{proof}

      \begin{proposition}\label{mh7}
      Let $X$ be a homogeneous extremally disconnected space
      which is not a $P$-space.
      Then $X^\om$ is pseudocompact.
      \end{proposition}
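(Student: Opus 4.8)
The plan is to verify the combinatorial criterion for pseudocompactness of $X^\om$ supplied by Proposition~\ref{p1p}(b): it suffices to produce, for every countable family $\set{\zeta_m:m\in\om}\subset\go_d(X)$, a single ultrafilter in $\bigcap_{m\in\om}u(\zeta_m)$. Writing $\zeta_m=(U_{m,n})_{n\in\om}$, I would first normalize. By Statement~\ref{ed1} each $\cl{U_{m,n}}$ is clopen, and the $\cl{U_{m,n}}$ ($n\in\om$) are pairwise disjoint because disjoint open sets in an \ed/ space have disjoint closures; since $U_{m,n}$ is dense in $\cl{U_{m,n}}$, an open set meets $U_{m,n}$ iff it meets $\cl{U_{m,n}}$, so $u(\zeta_m)$ is unchanged. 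Hence I may assume every $U_{m,n}$ is clopen. (If convenient, Statement~\ref{a1n1} or~\ref{cp2} can be used to pre‑standardize each coordinate, but the clopen reduction is the only one I will really need.)

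The structural input comes from Lemma~\ref{mh7l1} together with homogeneity. The lemma gives a point $x_*$ and a decreasing tower of clopen sets $O_k$ with $\bigcap_k O_k$ nowhere dense; setting $D_k=O_k\setminus O_{k+1}$ and discarding empty terms yields a disjoint sequence of nonempty clopen sets with $\bigcup_{k\ge j}D_k=O_j\setminus\bigcap_k O_k$ dense in the clopen neighborhood $O_j$ of $x_*$, so $x_*\in\cl{\bigcup_{k\ge j}D_k}$ for every $j$. Thus every neighborhood of $x_*$ meets infinitely many $D_k$, the sets $\set{k\in\om:W\cap D_k\neq\es}$ (over neighborhoods $W\ni x_*$) form a free filter on $\om$, and $u_{x_*}((D_k))=\bigcap_{W\ni x_*}\cl{\set{k:W\cap D_k\neq\es}}$ is a nonempty \emph{closed} subset of $\om^*$. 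Transporting $x_*$ by a self‑homeomorphism into an arbitrary nonempty open $V$ and intersecting the whole tower with $V$ (which stays clopen in $V$), the same computation gives, inside every nonempty open set, a point and a disjoint clopen sequence accumulating at it. The single‑sequence instance of the desired conclusion is exactly the statement that each given $\zeta_m$ has an accumulation point $y_m\in X$, i.e.\ that $u_{y_m}(\zeta_m)=\bigcap_{W\ni y_m}\cl{\set{n:W\cap U_{m,n}\neq\es}}$ is a nonempty closed subset of $\om^*$; this is where the hypotheses are consumed, and I would record it as the base case before attacking the product.

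With the $y_m$ in hand, the endgame is soft: each $u_{y_m}(\zeta_m)$ is a closed subset of the compact space $\be\om$, so if the family $\set{u_{y_m}(\zeta_m):m\in\om}$ has the finite intersection property, then $\bigcap_{m}u_{y_m}(\zeta_m)\neq\es$, and any $p$ in this intersection lies in $\bigcap_{m}u(\zeta_m)$, finishing the proof via Statement~\ref{ss1} and Proposition~\ref{p1p}(b). The finite intersection property unwinds to the following finitary demand: for every finite $F\subset\om$ and all neighborhoods $W_m\ni y_m$ ($m\in F$), the set $\set{n:W_m\cap U_{m,n}\neq\es$ for all $m\in F}$ must be infinite. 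To arrange this I would choose the accumulation points $y_m$ \emph{coherently} rather than independently: process $m=0,1,2,\dots$ in turn, at each stage placing the accumulation of $\zeta_m$ against one fixed clopen convergent skeleton (the tower $O_k$ transported appropriately), and using extremal disconnectedness — disjoint clopen sets have disjoint closures, so independently chosen pieces never interfere — to guarantee that the joint index sets remain infinite at every finite stage.

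The main obstacle is precisely this coherence, i.e.\ forcing all the sequences to accumulate along one common index filter; this is the only place where passing from finitely many to countably many coordinates can fail, and it is also what rules out naïve per‑coordinate reindexing (reindexing coordinate $m$ by $\sigma_m$ replaces the common $p$ by the generally distinct $\be\sigma_m(p)$, destroying simultaneity). A secondary difficulty is that the accumulation points $y_m$ need not be first countable, so their accumulation filters are not countably generated and no fusion along a single decreasing sequence of neighborhoods can capture $u_{y_m}(\zeta_m)$. I expect both to be absorbed by the same two features already isolated above: the countable clopen tower from Lemma~\ref{mh7l1} serves as the effective base controlling the accumulation, while extremal disconnectedness provides the independence needed to keep every finite joint index set infinite, so that the finite intersection property — and with it the closed‑set compactness argument of the previous paragraph — goes through.
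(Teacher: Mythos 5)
Your framework is right---reduce via Proposition~\ref{p1p}(b) to showing $\bigcap_{m}u(\zeta_m)\neq\es$, extract the clopen tower $(O_k)_{k\in\om}$ with nowhere dense intersection from Lemma~\ref{mh7l1}, and observe that its annuli accumulate at $x_*$---and you have correctly located the crux in the ``coherence'' step. But that step is the entire mathematical content of the proposition, and your proposal does not supply it: ``process $m=0,1,2,\dots$ in turn, placing the accumulation of $\zeta_m$ against one fixed clopen convergent skeleton, and use extremal disconnectedness to guarantee the joint index sets remain infinite'' describes what needs to happen, not why it does. Indeed, even your ``base case'' (each single $\zeta_m=(U_{m,n})_{n\in\om}$ has an accumulation point $y_m$) is not established by what you wrote: knowing that \emph{some} disjoint clopen sequence accumulates inside every nonempty open set does not show that the \emph{given} sequence accumulates, since its terms are scattered over $X$ rather than nested along the tower.

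The paper closes exactly this gap by building one master sequence and pushing it into every coordinate simultaneously. Using homogeneity and zero-dimensionality it chooses clopen sets $V_k\subset O_k$ containing $x_*$, each homeomorphic to clopen sets $V_{n,k}\subset U_{n,k}$ for all $n\le k$, with nonempty annuli $S_k=V_k\setminus V_{k+1}$. The homeomorphisms $h_{n,k}\colon V_k\to V_{n,k}$ carry $S_k$ to $S_{n,k}\subset U_{n,k}$, and---this is the step your sketch is missing---the union map $f_n\colon\bigcup_{k\ge n}S_k\to\bigcup_{k\ge n}S_{n,k}$ extends to a self-homeomorphism of the whole space by Statement~\ref{mh3p1}/Corollary~\ref{mh3p2}; it is this global extension that transfers accumulation points and yields $u((S_k)_{k\in\om})=u((S_{n,k})_{k\in\om})\subset u(\zeta_n)$ for every $n$ at once (Lemma~\ref{lemma2}). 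Since $u((S_k)_{k\in\om})\neq\es$ (the point $x_*$ lies in the closure of $\bigcup_{k}S_k$ but not in the union itself, because $\bigcap_k V_k$ is nowhere dense), one obtains a single nonempty set contained in every $u(\zeta_n)$---strictly stronger than the finite intersection property you were aiming for, and obtained with no fusion over $m$. Until you exhibit such simultaneous clopen embeddings together with their extension to global homeomorphisms, neither the base case nor the coherence step is proved, and the argument is incomplete.
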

      
\begin{proof}
      Using Lemma~\ref{mh7l1}, we take 
      $x_*\in X$ and $(O_n)_{n\in\om}\go(X)$ such that $O_n$ is clopen 
and $x_*\in O_{n+1}\subset O_n$ for each $n\in\om$
      and the set $\bigcap_{n\in\om}O_n$ is nowhere dense in~$X$.
      
      Let $\zeta_n=(U_{n,k})_{k\in\om}\in \go_n(X)$. By virtue of Proposition~\ref{p1p}, 
      to prove the proposition, it suffices to show that 
$\bigcap_{n\in\om}u(\zeta_n)\ne\es$.
      
      Since $X$ is a zero-dimensional homogeneous space, it follows that, 
      for each $k\in\om$, there exist  homeomorphic clopen  
sets $V_k, V_{0,k},V_{1,k}, ... , V_{k,k}$ such that $x_*\in V_k \subset O_k$, 
      $S_k=V_k\setminus V_{k+1}\neq\es$, and $V_{n,k}\subset U_{n,k}$ for 
$n=0,1,\dots,k$.
      Let $h_{n,k}\colon V_k\to V_{n,k}$ be homeomorphisms, and let $S_{n,k}=h_{n,k}(S_k)$ 
for $n=0,1,\dots,k$. We set
      \[
      W_{n,k} = \begin{cases}
      U_{n,k},  & k < n,
      \\
       S_{n,k}, & k \geq n
      \end{cases}
      \]
      for $n,k\in\om$.
      
      \begin{lemma}
\label{lemma2}
The following assertions hold:
      \begin{itemize}
      \item[\rm(1)] $u((S_k)_{k\in\om})\neq\es$;
      \item[\rm(2)] $u((S_k)_{k\in\om})=u((W_{n,k})_{k\in\om})$ 
      			for any $n\in\om$.
      \end{itemize} 
      \end{lemma}
      
\begin{proof}
      We set $F=\bigcap_{k\in\om}V_k$ and
      $Q_k=\bigcup_{j\geq k}S_j$. By construction  we have $Q_k=V_k\setminus F$.
      
      (1) 
      Since $P=\bigcap_{n\in\om}O_n$ is nowhere dense in $X$
      and $F\subset P$, it follows that $F$ is nowhere dense in $X$.
      Therefore, $\cl{Q_0}=\cl{V_0\setminus F}=V_0$.
      We have  $x_*\in \cl{Q_0}\setminus Q_0$, because $x_*\in F$. 
      Hence $x_*$ is an accumulation point for the sequence 
$(S_k)_{k\in\om}$
      and $u((S_k)_{k\in\om})\neq\es$.
      
      (2) 
      We set $P_n = \bigcup_{k\geq n}S_{n,k}$.
      Consider the homeomorphism $f_n\colon Q_n \to P_n$ defined by $f_n(x)=h_{n,k}(x)$
      if $x\in S_k$.
      Corollary~\ref{mh3p2} implies the existence of a $\tilde 
f_n\in\aut{X}$,
      for which that $\tilde f_n\restriction_{Q_n}=f_n$.
      Since $\tilde f_n(S_k)=S_{n,k}=W_{n,k}$ for $k\geq n$, we have 
      $u((S_k)_{k\in\om})=u((W_{n,k})_{k\in\om})$.
      \end{proof}
      
      The inclusions $W_{n,k}\subset U_{n,k}$ for $n,k\in\om$ and Lemma~\ref{lemma2} 
      imply
      \[
      \bigcap_{n\in\om}u(\zeta_n)\supset
      \bigcap_{n\in\om}u((W_{n,k})_{k\in\om})=
      u((S_k)_{k\in\om})
      \ne\es.
      \]
      \end{proof}
      
      \section{Homogeneous Subspaces of Product Spaces}
      
      \begin{theorem}\label{t1}
      Let $X$ be a homogeneous $\be\om$ space.
      Then any compact subspace of $X$ is finite.
      \end{theorem}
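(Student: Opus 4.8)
The plan is to argue by contradiction: I assume that $X$ contains an infinite compact subspace and derive a clash with the total orderability of the compact spectrum established in Proposition~\ref{ed6}. The whole argument is a short assembly of three facts already proved in the preceding section, namely that an infinite compact set forces the compact spectrum to exhaust all of $\om^*$ (Proposition~\ref{ed4}(1)), that homogeneity collapses the global spectrum to the spectrum at a single point (Proposition~\ref{ed4}(2)), and that on a $\be\om$ space the pointwise compact spectrum is linearly ordered by the Keisler--Rudin order (Proposition~\ref{ed6}).

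In detail, I would first fix an arbitrary point $x\in X$. Suppose toward a contradiction that $X$ has an infinite compact subspace. Then Proposition~\ref{ed4}(1) gives $\sp_k(X)=\om^*$. Since $X$ is homogeneous, Proposition~\ref{ed4}(2) yields $\sp_k(X)=\sp_k(x,X)$, so $\sp_k(x,X)=\om^*$. On the other hand, $X$ is a $\be\om$ space, so Proposition~\ref{ed6} tells us that $\sp_k(x,X)$ is totally ordered by $\lkr$. Combining these two conclusions, all of $\om^*$ would be $\lkr$-totally ordered. This contradicts Proposition~\ref{ed9}, which furnishes pairwise $\lkr$-incomparable ultrafilters in $\om^*$ (indeed $2^{2^\om}$ of them). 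Hence $X$ contains no infinite compact subspace; equivalently, every compact subspace of $X$ is finite.

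I do not expect a genuine obstacle, since the theorem is essentially a corollary of the machinery of the previous section: the substantive content resides in Proposition~\ref{ed6} (Frol\'{\i}k's linear-ordering phenomenon transported to $\be\om$ spaces through the identification of discrete-sequence closures with copies of $\be\om$) and in the mere existence of $\lkr$-incomparable ultrafilters. The only point meriting a moment's care is that the sequences witnessing $\sp_k$ genuinely lie in $\gs_{dk}(X)$: the infinite compact set guarantees $\gs_{dk}(X)\ne\es$, and any discrete exact sequence contained in it automatically has compact closure, so Proposition~\ref{ed4}(1) applies precisely as stated.
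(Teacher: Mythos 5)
Your proposal is correct and follows essentially the same route as the paper's own proof: assume an infinite compact subspace, combine Proposition~\ref{ed4} (both parts) to get $\sp_k(x,X)=\om^*$, and contradict the total $\lkr$-ordering from Proposition~\ref{ed6} via the incomparable ultrafilters of Proposition~\ref{ed9}. No issues.
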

      
\begin{proof}
      Suppose that, on the contrary, $X$ contains an infinite 
compact subspace. 
      By Proposition~\ref{ed4} we have $\sp_k(x,X)=\om^*$ for each 
$x\in X$,
      which contradicts Propositions~\ref{ed6} and~\ref{ed9}.
      \end{proof}
      
      \begin{cor}\label{tc1}
      Any compact subspace of a homogeneous extremally disconnected space is finite.
      \end{cor}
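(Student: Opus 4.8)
The plan is to reduce this corollary directly to Theorem~\ref{t1} by invoking the standard structural hierarchy relating extremally disconnected spaces to $\be\om$ spaces. First I would recall the two implications already recorded in the preliminaries: every \ed/ space is an F-space, and every F-space is a $\be\om$ space. Chaining these, any extremally disconnected space is a $\be\om$ space. Consequently, a homogeneous \ed/ space $X$ is in particular a homogeneous $\be\om$ space.

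Once this observation is in place, Theorem~\ref{t1} applies to $X$ without modification: it asserts precisely that every compact subspace of a homogeneous $\be\om$ space is finite. Applying it to $X$ gives that every compact subspace of the homogeneous \ed/ space $X$ is finite, which is exactly the claim. No further construction or estimate is needed, since all the genuine work has already been carried out in the proof of Theorem~\ref{t1} (through Propositions~\ref{ed4}, \ref{ed6}, and~\ref{ed9}).

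I expect no real obstacle here: the statement is a formal specialization. The only point worth noting explicitly is that homogeneity is a hypothesis imposed on $X$ itself and is entirely unaffected by regarding $X$ under the weaker label of a $\be\om$ space, so there is nothing to verify beyond the membership $\text{\ed/} \subset \text{F-space} \subset \be\om\text{ space}$. Thus the entire argument consists of recording this inclusion and citing Theorem~\ref{t1}.
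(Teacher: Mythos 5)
Your proof is correct and is exactly the intended derivation: the paper records in Section~2 that every \ed/ space is an F-space and every F-space is a $\be\om$ space, so the corollary is the immediate specialization of Theorem~\ref{t1} that you describe. Nothing further is needed.
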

      
      Corollary~\ref{tc1} is also implied by Theorem~2(c) in \cite{Douwen-1979}.
      
      \begin{theorem}\label{t2} (CH)
      Let $Y$ be a $\be\om$ space, and let $X\subset Y^\om$ be a homogeneous 
space.
      Then any compact subspace of $X$ metrizable.
      \end{theorem}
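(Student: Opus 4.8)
The plan is to argue by contradiction: assuming some compact $K\subseteq X$ is nonmetrizable, I will manufacture two $\lkr$-incomparable ultrafilters lying inside a single $\lkr$-chain, which Proposition~\ref{ed6} forbids. The argument runs in three stages. First I would locate a copy of $\be\om$ inside a single coordinate projection of $K$; then I would use the homogeneity of $X$ to turn it into discrete exact sequences that $p$-converge to \emph{one fixed point}, for a large family of selective $p$ and with nonmetrizable tails; finally I would feed each of these into Statement~\ref{ed12} and count.

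For the first stage, write $\pi_m\colon Y^\om\to Y$ for the projections. If every $\pi_m(K)$ were finite, then $K$ would embed in the countable product $\prod_m\pi_m(K)$ of finite spaces and hence be metrizable, so some $L=\pi_m(K)$ must be infinite. As a subspace of the $\be\om$ space $Y$, $L$ is again a $\be\om$ space, and being infinite Hausdorff it contains a countably infinite discrete set $D=\set{d_k:k\in\om}$; its compact infinite closure $\cl D$ is then homeomorphic to $\be\om$ by the defining property of $\be\om$ spaces. Lifting, I would pick $e_k\in K$ with $\pi_m(e_k)=d_k$, obtaining a discrete exact sequence $E=(e_k)_{k\in\om}\in\gs_{dk}(X)$.

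In the second stage, for $p\in\om^*$ set $x_p=\lim_p E\in K$, so that $\pi_m(x_p)=\lim_p(d_k)$ is a non-isolated point of $\cl D\cong\be\om$. Since $\be\om$ carries no nontrivial convergent sequence, $x_p$ must have uncountable character in every tail $\sset{x_p}\cup\set{e_k:k\in M}$ with $M\in p$: a countable base at $x_p$ would yield a sequence $e_{k_j}\to x_p$ and hence $d_{k_j}\to\pi_m(x_p)$, which is absurd. Thus each such tail is nonmetrizable. I would then fix one point $x_0\in X$ and, using homogeneity, choose $h_p\in\aut X$ with $h_p(x_p)=x_0$ and set $z^p_k=h_p(e_k)$; the sequence $(z^p_k)_k\in\gs(Y^\om)$ now $p$-converges to the single point $x_0$, and all its tails $\sset{x_0}\cup\set{z^p_k:k\in M}$ remain nonmetrizable, being homeomorphic copies of the previous ones.

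Finally, under CH Proposition~\ref{ed10} supplies a set $C\subseteq\om^*$ of $2^{2^\om}$ pairwise $\lkr$-incomparable selective ultrafilters. For each $p\in C$, Statement~\ref{ed12}, applied with the $\be\om$ space taken to be $Y$, the sequence $(z^p_k)_k$, and the limit $x_0$, yields a coordinate $m_p$ and an $N_p\in p$ with $(\pi_{m_p}(z^p_k))_{k\in N_p}$ discrete and exact; as its range sits in the compact set $\pi_{m_p}(h_p(K))$, this sequence lies in $\gs_{dk}(Y)$ and has $p$-limit $\pi_{m_p}(x_0)$, whence $p\in\sp_k(\pi_{m_p}(x_0),Y)$. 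The decisive gain is that every target $\pi_{m_p}(x_0)$ lies in the countable set $\set{\pi_j(x_0):j\in\om}$, so pigeonhole pins two distinct $p,q\in C$ into the same $\sp_k(y^*,Y)$; but Proposition~\ref{ed6} makes $\sp_k(y^*,Y)$ $\lkr$-totally ordered, contradicting the incomparability of $p$ and $q$. I expect the hinge between the last two stages to be the delicate part: Statement~\ref{ed12} is informative only for sequences with nonmetrizable tails, while the count closes only when the limit is a single fixed point, and it is precisely the homogeneity of $X$ — used to drag every $x_p$ back to $x_0$ — that lets both requirements hold simultaneously.
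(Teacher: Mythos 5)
Your proposal is correct and follows essentially the same route as the paper's own proof: extract a discrete exact sequence in $\gs_{dk}(X)$ whose closure is a copy of $\be\om$ (the paper cites Statement~\ref{ed11} for this), use homogeneity to send all its $p$-limits to one fixed point, then combine Proposition~\ref{ed10}, Statement~\ref{ed12}, and a pigeonhole over the countably many coordinates to contradict Proposition~\ref{ed6}. Your explicit verification that the tails are nonmetrizable (so that Statement~\ref{ed12} applies) is a detail the paper leaves implicit, but the argument is the same.
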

      
\begin{proof}
      Suppose that, on the contrary, $X$ contains a nonmetrizable compact subspace $K$. 
Then Statement~\ref{ed11} 
implies the existence of a sequence $\zeta=(z_m)_{m\in\om}\in \gs_{dk}(X)$ such that 
$\cl\zeta$ is homeomorphic to $\be\om$. Take $z\in X$. For each $p\in \om^*$, 
      we fix $f_p\in\aut X$ such that $f_p(\lim_p\zeta)=z$.
      
      By virtue of Proposition~\ref{ed10} there exists a $C\subset \om^*$, 
$|C|=2^{2^\om}$, consisting of pairwise
      $\lkr$-incomparable selective ultrafilters.
      
      Statement~\ref{ed12} implies that, for each $p\in C$, there exists 
      an $m_p\in \om$ and an $N_p\in p$ such that 
      the set $\set{\pi_{m_p}(z_n):n\in N_p}$ is discrete and 
$\pi_{m_p}(x_j)\ne \pi_{m_p}(x_i)$
      for any different $i,j\in N_p$.
      Since $|C|>\om$, there are different $p,q\in C$ for which 
$m=m_p=m_q$.
      Therefore, $p,q\in \sp_k(\pi_m(z),X)$. This contradicts Proposition~\ref{ed6}.
      \end{proof}
      
      \begin{cor}\label{tc2}  
\textup{(CH)}
      Let $Y$ be a  $\be\om$ space, and let $X\subset Y^\om$ be a homogeneous compact 
space. Then $X$ is metrizable.
      \end{cor}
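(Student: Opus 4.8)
The plan is to read the corollary directly off Theorem~\ref{t2} by taking the distinguished compact subspace to be $X$ itself. The hypotheses line up exactly: we are given that $X$ is homogeneous and that $X\subset Y^\om$ with $Y$ a $\be\om$ space, which is precisely the hypothesis of Theorem~\ref{t2}. That theorem then asserts that \emph{every} compact subspace of such an $X$ is metrizable.

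First I would observe that $X$, being compact by assumption, is a compact subspace of itself. Applying Theorem~\ref{t2} with the compact set taken to be all of $X$, its conclusion immediately yields that $X$ is metrizable, which is exactly the desired statement. No separate argument is needed.

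There is essentially no obstacle: the corollary is a direct specialization of Theorem~\ref{t2} to the situation in which the ambient homogeneous space is itself compact. The compactness of $X$ is used only to certify that $X$ qualifies as one of the compact subspaces already covered by the theorem. In particular the corollary inherits the CH hypothesis and the entire proof machinery behind Theorem~\ref{t2} (Statements~\ref{ed11} and~\ref{ed12} together with Propositions~\ref{ed6} and~\ref{ed10}), so nothing beyond a one-line invocation is required.
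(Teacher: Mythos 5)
Your proposal is correct and is precisely how the paper intends the corollary to be obtained: the paper states it immediately after Theorem~\ref{t2} with no separate proof, since $X$, being compact and a homogeneous subspace of $Y^\om$, is a compact subspace of itself and Theorem~\ref{t2} applies verbatim. Nothing further is needed.
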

      
      \begin{theorem}\label{t3} 
\textup{(CH)}
      Let $Y$ be a $\be\om$ space, and let $X$  be a homogeneous 
subspace of  $Y^n$ for some $n\in\om$.
      Then any compact subspace of $X$ is finite.
      \end{theorem}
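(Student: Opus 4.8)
The plan is to mirror the proof of Theorem~\ref{t2}, exploiting the finiteness of the exponent $n$ to replace ``nonmetrizable'' by ``infinite'' at the input and ``metrizable'' by ``finite'' at the output. Suppose, for contradiction, that $X$ contains an infinite compact subspace $K$. Since $K\subset Y^n$ with $n$ finite, condition~(1) of Statement~\ref{ed11} applies (with the ambient $\be\om$ space taken to be $Y$ and $A=n$), so $\be\om$ embeds in $K$; concretely we obtain a sequence $\zeta=(z_m)_{m\in\om}\in\gs_{dk}(X)$ whose closure $\cl\zeta$ is homeomorphic to $\be\om$. This is precisely the step where finiteness of $A$ is used: unlike in Theorem~\ref{t2}, we need no hypothesis on the weight of $K$, only that it is infinite. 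Fix a base point $z\in X$, and for each $p\in\om^*$ use homogeneity to choose $f_p\in\aut X$ with $f_p(\lim_p\zeta)=z$.

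Next, by Proposition~\ref{ed10} fix $C\subset\om^*$ with $|C|=2^{2^\om}$ consisting of pairwise $\lkr$-incomparable selective ultrafilters. For each $p\in C$ put $\eta_p=f_p(\zeta)=(f_p(z_m))_{m\in\om}$; this is again a discrete exact sequence with compact closure, and $\lim_p\eta_p=z$. I would apply Statement~\ref{ed12} to $\eta_p$, regarding $Y^n$ as sitting inside $Y^\om$ by padding the remaining coordinates with a fixed point, so that the base $\be\om$ space is $Y$. Its nonmetrizability hypothesis holds: for $M\in p$ the set $\sset{z}\cup\set{f_p(z_m):m\in M}$ is homeomorphic, via $f_p$, to $\sset{\lim_p\zeta}\cup\set{z_m:m\in M}$, and the latter sits inside $\cl\zeta\cong\be\om$ as a countable discrete set together with one of its free-ultrafilter limit points, hence is not first countable at $\lim_p\zeta$ and so is nonmetrizable. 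Statement~\ref{ed12} then yields $m_p\in\om$ and $N_p\in p$ for which $(\pi_{m_p}(f_p(z_m)))_{m\in N_p}$ is discrete and exact; since the padded coordinates are constant, necessarily $m_p<n$. This sequence lies in $\gs_{dk}(Y)$ and, as $N_p\in p$, has $p$-limit $\pi_{m_p}(\lim_p\eta_p)=\pi_{m_p}(z)$, whence $p\in\sp_k(\pi_{m_p}(z),Y)$.

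Finally comes the pigeonhole step, which is where finiteness upgrades the conclusion from ``metrizable'' (in Theorem~\ref{t2} the coordinates ranged over $\om$) to ``finite''. The assignment $p\mapsto m_p$ maps $C$ into the finite set $\sset{0,1,\dots,n-1}$, and $|C|=2^{2^\om}>n$, so there exist distinct $p,q\in C$ with $m_p=m_q=m$. Then $\pi_m(z)$ is a single point of $Y$ and $p,q\in\sp_k(\pi_m(z),Y)$. Since $Y$ is a $\be\om$ space, Proposition~\ref{ed6} tells us that $\sp_k(\pi_m(z),Y)$ is totally ordered by $\lkr$, so $p$ and $q$ are $\lkr$-comparable, contradicting the choice of $C$. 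Hence $K$ must be finite.

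I expect the genuinely delicate points to be administrative rather than conceptual: correctly verifying the nonmetrizability hypothesis of Statement~\ref{ed12} (that neighborhoods of the $p$-limit inside $\cl\zeta$ fail to be first countable), and keeping track that Statements~\ref{ed12} and~\ref{ed6} are invoked with the genuine $\be\om$ space $Y$ in the single coordinate $m$, not with the subspace $X\subset Y^n$, which need not itself be a $\be\om$ space. The use of the $f_p$ to drag every $p$-limit onto the common point $z$, so that a shared coordinate $m$ produces the \emph{same} point $\pi_m(z)$ of $Y$, is the crux inherited from Theorem~\ref{t2}.
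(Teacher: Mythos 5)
Your proof is correct, but it takes a genuinely different route from the paper's. The paper deduces Theorem~\ref{t3} from Theorem~\ref{t2} in three lines: since $X\subset Y^n\subset Y^\om$, any compact $K\subset X$ is metrizable by Theorem~\ref{t2}; each projection $\pi_i(K)$ is then a compact metrizable subset of the $\be\om$ space $Y$, hence finite (an infinite compact subset of a $\be\om$ space contains a copy of $\be\om$ by Statement~\ref{ed11}(1) and so cannot be metrizable); therefore $K\subset\prod_{i<n}\pi_i(K)$ is finite. You instead rerun the entire Theorem~\ref{t2} machinery (Statements~\ref{ed11} and~\ref{ed12}, Propositions~\ref{ed10} and~\ref{ed6}) directly on the finite power. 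Both arguments work and both use CH only through Proposition~\ref{ed10}. What the paper's reduction buys is brevity; what your direct argument buys is a self-contained proof that isolates exactly where finiteness of the exponent enters, namely in Statement~\ref{ed11}: with finitely many factors, infiniteness of $K$ alone yields an embedded copy of $\be\om$, whereas with $\om$ factors one needs $w(K)>\om$. (Your remark that the pigeonhole step is where finiteness matters is slightly off --- the pigeonhole over coordinates works equally well for $\om$ coordinates since $|C|=2^{2^\om}>\om$, exactly as in Theorem~\ref{t2}; the genuine use of finiteness is the one you already flagged at the \ref{ed11} step.) Your verification of the nonmetrizability hypothesis of Statement~\ref{ed12} via non-first-countability of a free ultrafilter point over its generating discrete set is the right argument, and your explicit use of $\eta_p=f_p(\zeta)$ cleans up a notational abuse in the paper's own proof of Theorem~\ref{t2}. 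The one gloss you share with the paper is the final passage from ``a discrete exact subsequence indexed by $N_p\in p$'' to ``$p\in\sp_k(\pi_m(z),Y)$'': strictly one must reindex by the increasing bijection $\om\to N_p$, which replaces $p$ by a Rudin--Keisler equivalent ultrafilter; since comparability in $\lkr$ is invariant under this equivalence, the contradiction with Proposition~\ref{ed6} survives, but the step deserves a sentence.
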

      
\begin{proof}
      Let $K\subset X$ be a compact set. Then  $K$ 
is metrizable by Theorem~\ref{t2}.
      Since all metrizable compact sets in any $\be\om$ space are finite, it follows that
      the projection of $K$ on any factor in $Y^n$ is finite. Therefore, $K$ 
is finite.
      \end{proof}

      \begin{question} \label{q1}
      Are Theorem~\ref{t2} and~\ref{t3}
      true na\"{\i}vely? What if $Y$ is additionally assumed to be an F-space 
(an \ed/ space)?
      \end{question}
      
      In what follows, we treat positive integers as sets of smaller integers. 
      
      Let $X\subset Y^n$, where $X$ and $Y$ are sets and $k,n\in\om$.
      We write $\dim_p(X)\geq k$ if there exists a set $M\subset n$, $|M|=k$,
      and a sequence $(x_i)_{i\in\om}\subset X$ such that $\pi_m(x_i)\neq \pi_m(x_j)$ for 
any $m\in M$ 
      and any different $i,j\in\om$. We set
      \[
      \dim_p(X)=\max \set{m: \dim_p(X)\geq m}.
      \]
      Let $\cp(Y,n,k)$ denote the family of sets of the form $\sset p\times 
Y^M$,
      where $M\subset n$, $|M|=k$, and $p\in Y^{n\setminus M}$.
      We set 
      \[
      \cp^*(Y,n,k)=\set{\bigcup\gamma: \gamma\subset \cp(Y,n,k), 
|\gamma|<\om}.
      \]
      Given $q\in Y^n$, we put 
      \begin{align*}
      \cp_q(Y,n,k)&=\set{
      	\sset{q\restriction_{n\setminus M}}\times Y^M: 
      	M\subset n, |M|=k
      },
      \\
      \cp_q^*(Y,n,k)&=\bigcup \cp_q(Y,n,k).
      \end{align*}
      
\begin{assertion} 
\label{at3}
      Suppose that $Y$ is an infinite set, $n\in\om$, $k\leq n$, and $X\subset Y^n$.
Then the following assertions hold:
      \begin{itemize}
      \item[\rm(1)] if $Z\subset X$, then $\dim_p(Z)\leq \dim_p(X)$; 
      \item[\rm(2)] if $X_1,X_2\subset Y^n$ and $X=X_1\cap X_2$, then 
      $$
\dim_p(X)=\max(\dim_p(X_1),\dim_p(X_2));
$$
      \item[\rm(3)] $\dim_p(Z)= k$ for all $Z\in \cp^*(Y,n,k)$;
      \item[\rm(4)] given $q,r\in Y^n$, $q\in \cp_r(Y,n,k)$ if and only if 
      $|\set{i<n: \pi_i(q)\neq \pi_i(r)}|\leq k$;
      \item[\rm(5)] 
      $\dim_p(X)\leq k$ if and only if
      $X\subset M$ for some $M\in \cp^*(Y,n,k)$;
      \item[\rm(6)] 
      if $Y$ is a space, $q\in X$,
      	and $\dim_p(X)\leq k$, then
      $q\in \intx X{X\cap \cp_q^*(Y,n,k)}$.
      \end{itemize} 
      \end{assertion}
      
\begin{proof}
      Assertion (1) is obvious. 

Assertion (2) follows from the observation that if $\dim_p(X) \geq k$, then, by definition, 
      we have either $\dim_p(X_1) \geq k$ or $\dim_p(X_2) \geq k$.
      
Let us prove (3). Obviously, we have $\dim_p(Q)=k$ for $Q\in \cp(Y,n,k)$, so that it suffices to  
apply (2).
      
Assertion (4) follows from the definitions.
      
      Let us prove (5). Assertions (1) and (3) imply that if 
      $X\subset M$ for some $M\in \cp^*(Y,n,k)$,
      then $\dim_p(X)\leq k$. Let us prove that if $\dim_p(X)\leq k$, then 
      $X\subset M$ for some $M\in \cp^*(Y,n,k)$. Assume the contrary. 
      Let us construct  a sequence $(q_n)_{n\in \om}\subset X$ by induction on $n$ as follows. 
      Take any $q_0\in X$. Having constructed $q_0,...,q_{n-1}\in X$, we choose 
      $q_n\in X\setminus \bigcup_{i=0}^{n-1} \cp_{q_i}^*(Y,n,k)$.
      Note that  $q_m\notin \cp_{q_n}^*(Y,n,k)$ for $m,n\in \om$.
Consider the set $M_{n,m}=\set{i<n:\pi_i(q_n)\neq \pi_m}$; 
      according to assertion~(4), we have $|M_{n,m}|>k$. 
      Let $\cm$ denote the family of all finite sets $n$. We color the complete graph 
on the $\om$
      elements of $\cm$ by assigning the set $M_{n,m}$ to each edge 
$\sset{n,m}$. According to Ramsey's theorem, there exists an infinite set $M\subset 
\om$ such that
      $M=M_{n,m}=M_{n',m'}$ for $n,m,n'm'\in \om$, $n\neq m$, $n'\neq m'$.
      By definition, we have $\dim_p(X)\geq |M|>k$.
      
      Let us prove (6). 
      Assertion (5) implies the existence of a finite set $\gamma\subset \cp(Y,n,k)$
      for which $X\subset \bigcap \gamma$. Since $\gamma$ consists of 
closed sets, it follows that $x\in \intx X{X\cap \bigcap \gamma_1}$ and $x\notin 
\cl{X\cap \bigcap \gamma_1}$, where $\gamma_1=\set{F\in\gamma: q\in F}$ and 
$\gamma_1=\set{F\in\gamma: q\notin F}$. The inclusion $\gamma_1\subset 
\cp_q(Y,n,k)$ implies 
      $q\in \intx X{X\cap \cp_q^*(Y,n,k)}$.
      \end{proof}
      
      \begin{proposition}\label{tp4-1}
      Suppose that $Y$ is an F-space, $n\in\om$, $n>1$, and $X\subset Y^n$ is a homogeneous 
space.
      Then $\dim_p(K)<n-1$ for any compact subspace $K\subset 
X$.
      \end{proposition}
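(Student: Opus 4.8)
The plan is to argue by contradiction, converting the hypothesis $\dim_p(K)\ge n-1$ into a configuration that violates the total $\lkr$-ordering of $\sp_k(\,\cdot\,,Y)$ (Proposition~\ref{ed6}), the key leverage being the rigidity of weak $P$-points furnished by Proposition~\ref{ed8}. So suppose $\dim_p(K)\ge n-1$ and fix a witnessing set $M\subset n$ with $|M|=n-1$ together with a sequence $(x_i)_{i\in\om}\subset K$ injective on every coordinate in $M$. Applying Statement~\ref{cp1} to $\set{x_i}\subset Y^n$ and discarding finitely many terms, I may assume each projection $\zeta_j=(\pi_j(x_i))_i$ is either constant or discrete exact, and that $\zeta_j$ is discrete exact for every $j\in M$. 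Since $Y$ is an F-space, hence a $\be\om$ space, $\cl{\zeta_j}\cong\be\om$ for $j\in M$; as $\pi_j$ then restricts to a continuous bijection of the compact set $L=\cl{(x_i)_{i\in\om}}$ onto $\cl{\zeta_j}$, the Stone extension $\phi\colon\be\om\to L$, $\phi(i)=x_i$, is a homeomorphism. Thus $L\cong\be\om$ and $x^{(p)}:=\lim_p(x_i)=\phi(p)$ is, for each $p\in\om^*$, a distinct non-isolated point of $L$.

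Next I set up the pinning mechanism. By Proposition~\ref{ed9} fix $C\subset\om^*$ of pairwise $\lkr$-incomparable weak $P$-ultrafilters with $|C|=2^{2^\om}$. Fix a base point $z\in X$ and, using homogeneity, for each $p\in C$ an $f_p\in\aut X$ with $f_p(x^{(p)})=z$; for distinct $p,q\in C$ put $g=f_q^{-1}\circ f_p\in\aut X$, so $g(x^{(p)})=x^{(q)}$. For a coordinate $j$ on which $\zeta_j$ is discrete exact, $\zeta_j\in\gs_{dk}(Y)$ has $q$-limit $\pi_j(x^{(q)})$, while $\xi_j:=(\pi_j(g(x_i)))_i\in\gs_k(Y)$ has $p$-limit $\pi_j(g(x^{(p)}))=\pi_j(x^{(q)})$. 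Applying Proposition~\ref{ed8} in $Y$, with the roles of its two ultrafilters played by $q$ (the discrete side) and $p$ (the arbitrary side), yields $\set{i:\pi_j(g(x_i))=\pi_j(x^{(q)})}\in p$. Hence every discrete-exact coordinate can be pinned.

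If all $n$ coordinates are discrete exact — the case $\dim_p(K)\ge n$ — then intersecting the $n$ pinning sets, a finite intersection of members of the free ultrafilter $p$, produces some $i_0$ with $g(x_{i_0})=x^{(q)}=g(x^{(p)})$; injectivity of $g$ and of $\phi$ forces $i_0=p$, absurd since $i_0\in\om$ while $p\in\om^*$. This already rules out $\dim_p(K)\ge n$, which is the clean heart of the argument.

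The remaining case, $\dim_p(K)=n-1$ exactly, is where I expect the real difficulty, and it is the reason the bound must drop to $n-2$ rather than merely $n-1$. Here the single non-injective coordinate, say $n-1$, is forced by Statement~\ref{cp1} to be constant, equal to some $c\in Y$, so only the $n-1$ coordinates in $M$ can be pinned. Pinning them places $g(x_i)$, for $p$-many $i$, on the fibre $\ell=\set{y:\pi_j(y)=\pi_j(x^{(q)})\text{ for }j\in M}\cong Y$ through $x^{(q)}$; these images are distinct and discrete, so their last coordinates form a discrete exact sequence in $Y$ converging along $p$ to $c=\pi_{n-1}(x^{(q)})$. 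The obstacle is to turn this into a genuine pair of $\lkr$-incomparable witnesses in $\sp_k(c,Y)$: extracting an honest element of $\gs_{dk}(Y)$ requires restricting to the $p$-large index set, and such a reindexing preserves neither the weak-$P$ property nor the incomparability of the ultrafilters in $C$, so Propositions~\ref{ed6} and~\ref{ed8} cannot be invoked directly as above. Resolving this amounts to pinning the last, constant coordinate — effectively reducing the problem to $Y^{n-1}$, but through a slice (a fibre, in the sense of the flat/fibre structure of Statement~\ref{at3}) that need not be homogeneous, which is precisely where the extra work lies.
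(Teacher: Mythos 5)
Your argument, as you acknowledge, only disposes of the case $\dim_p(K)\ge n$ (all $n$ coordinates discrete exact); the case $\dim_p(K)=n-1$ with one constant coordinate is exactly the content of the proposition, and you leave it open. So there is a genuine gap, and the missing idea is a structural one: you are working with only \emph{two} incomparable weak $P$-points at a time, whereas the proof needs \emph{three}. Fix pairwise $\lkr$-incomparable weak $P$-points $p,q,r$ and let $x=\lim_p\zeta$, where $\zeta=(x_i)_{i\in\om}$ is your sequence with coordinates $1,\dots,n-1$ discrete exact. Since $X$ is homogeneous and contains the infinite compact set $K$, Proposition~\ref{ed4} gives $\sp_k(x,X)=\om^*$, so there exist $\xi=(u_m),\rho=(v_m)\in\gs_{dk}(X)$ with $x=\lim_q\xi=\lim_r\rho$ --- two \emph{independent} discrete exact sequences through $x$, not obtained by transporting $\zeta$ by an automorphism. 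Applying Proposition~\ref{ed8} coordinatewise to the pairs $(\pi_i\circ\zeta,\pi_i\circ\xi)$ and $(\pi_i\circ\zeta,\pi_i\circ\rho)$ for $i=1,\dots,n-1$ pins those coordinates: a $q$-large set of the $u_m$ and an $r$-large set of the $v_m$ lie in the fibre $F=\{(w,x_1,\dots,x_{n-1}):w\in Y\}$, which is homeomorphic to $Y$ regardless of whether the $0$th coordinate of $\zeta$ is constant or injective. This is the paper's proof (Proposition~\ref{tp4-1}); your two-ultrafilter scheme cannot reach it because the anchor sequence $\zeta$ itself never lies in $F$, so you never accumulate two incomparable types at one point of a single copy of $Y$.

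The obstacle you flag at the end --- that restricting to a $p$-large index set destroys the weak $P$-point property and incomparability --- is a red herring for this route. After pinning, no further application of Proposition~\ref{ed8} is needed: the restrictions of $\xi$ and $\rho$ to the large index sets are honest members of $\gs_{dk}(F)$ converging to $x$ along ultrafilters that are Rudin--Keisler \emph{equivalent} to $q$ and $r$ (the reindexing is a bijection onto a set in the ultrafilter), and RK-equivalence preserves $\lkr$-incomparability. Thus $\sp_k(x,F)$ contains two $\lkr$-incomparable ultrafilters, contradicting Proposition~\ref{ed6} for the $\be\om$ space $F$; the weak $P$-point property is consumed entirely at the pinning stage and plays no role in the final contradiction.
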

      \begin{proof}
     Assume the contrary. Then $\dim_p(K)\geq n-1$. There exists a set $M\subset 
n$ with $|M|=n-1$ and a sequence 
      $\zeta=(x_i)_{i\in\om}\subset K$ such that $\pi_m(x_i)\neq \pi_m(x_j)$ 
for $m\in M$.
      We can assume without loss of generality that $M=\sset{1,2,\dots ,n-1}$.
      Passing to a subsequences if necessary, we can also assume that
      the sequences $\zeta_i=(\pi_i(x_j))_{j\in\om}$ are discrete and $\zeta_i\in\gs_{dk}(X)$ 
for $i>0$.
      Proposition~\ref{ed9} implies the existence of three pairwise $\lkr$-incomparable weak $P$ 
ultrafilters $p$, $q$, and $r$. 
      Let $x=\lim_p \zeta=(x_0,\dots,x_{n-1})$.
      By  virtue of Proposition~\ref{ed4} we have $\sp_k(x,X)=\om^*$.
      There exist $\xi=(u_n)_{n\in\om}$ and $\rho=(v_n)_{n\in\om}\in \gs_{dk}(X)$
      for which $x=\lim_q \xi=\lim_r \rho$.
      According to Proposition~\ref{ed8}, for $i=1,2,\dots,n-1$, 
      we have $M_i=\set{n\in\om: \pi_i(u_n)=x_i}\in q$ and 
      $N_i=\set{n\in\om: \pi_i(v_n)=x_i}\in r$. 
      We set $M=\bigcap_{i=1}^{n-1} M_i$ and $N=\bigcap_{i=1}^{n-1} N_i$.
      Then $M\in q$, $N\in r$,  $\set{u_n: \in M}\subset F$,  and 
      $\set{v_n: \in N}\subset F$, where $F=\set{(w,x_1,\dots , x_{n-1}: w\in Y}$.
      The space $F$ is homeomorphic to $Y$, and $q,r\in \sp_k(x,F)$.
      This contradicts Proposition~\ref{ed6}.
      \end{proof}
      
      \begin{proposition}\label{tp4-2}
      If $Y$ is a $\be\om$ space, $n\in\om$, and $X\subset Y^n$ is homogeneous, 
then $\dim_p(K)>1$ for any infinite compact space 
$K\subset X$.
      \end{proposition}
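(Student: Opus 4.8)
The plan is to argue by contradiction: assume $\dim_p(K)\le 1$, and aim to produce a single coordinate $m<n$ together with two $\lkr$-incomparable ultrafilters lying in $\sp_k(w,Y)$ for one point $w\in Y$. Since $Y$ is a $\be\om$ space, this contradicts the total order given by Proposition~\ref{ed6}. The whole difficulty is to force two incomparable ultrafilters onto a \emph{single} coordinate copy of $Y$.

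First I would place a copy of $\be\om$ inside $K$ in a favorable position. As $A=n$ is finite and $K$ is infinite, Statement~\ref{ed11} yields a countable discrete $M\subset K$ with $\cl M\cong\be\om$ and a coordinate $m$ for which $\pi_m\restriction_M$ is injective and $\pi_m(M)$ is discrete in $Y$. Applying Statement~\ref{cp1} to $M$ and using the hypothesis $\dim_p(K)\le 1$ to rule out a \emph{second} almost exact coordinate, I pass to an exact subsequence $\zeta=(x_j)_{j\in\om}\subset M$ whose coordinates other than $m$ are eventually constant; discarding finitely many terms, $\zeta$ lies in a single axis-parallel line $L$ in direction $m$, with $\cl\zeta\cong\be\om$ and $L\cap X$ a $\be\om$ space (a subspace of $Y$). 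Fixing a weak $P$-point $p$ (Proposition~\ref{ed9}) and $z=\lim_p\zeta\in L\cap K$, the discrete exact sequence $\pi_m(\zeta)$ witnesses $p\in\sp_k(\pi_m(z),Y)$.

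Next I invoke homogeneity. By Proposition~\ref{ed4} we have $\sp_k(z,X)=\om^*$, so, taking from Proposition~\ref{ed9} a family $C$ of pairwise $\lkr$-incomparable weak $P$-points with $|C|>n$, each $u\in C$ is realized as $z=\lim_u\xi_u$ for some $\xi_u\in\gs_{dk}(X)$. The goal is to attach to every $u\in C$ a coordinate $m_u<n$ with $u\in\sp_k(\pi_{m_u}(z),Y)$; then, since $|C|>n$, two incomparable $u,v\in C$ must satisfy $m_u=m_v=:m'$, whence $u,v$ lie in the totally ordered set $\sp_k(\pi_{m'}(z),Y)$, the desired contradiction. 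Here is where $\dim_p(K)\le 1$ is meant to do its work: by Statement~\ref{at3}(6) the point $z$ has a $K$-neighbourhood contained in the $n$ axis-parallel lines through $z$, so any sequence that \emph{lies in $K$} and converges to $z$ is, along its ultrafilter, eventually confined to one of these lines and hence pins the required coordinate \emph{for free}, using only weak $P$-points.

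The hard part — and the main obstacle — is that the sequences $\xi_u$ furnished by homogeneity live in $X$, not in $K$, so Statement~\ref{at3}(6) does not confine them; and in ZFC I have only weak $P$-points, so I cannot refine $\xi_u$ \emph{within} $u$ to make a coordinate discrete and exact (that is exactly the role of selectivity in Statement~\ref{ed12} and in the proof of Theorem~\ref{t2}). To get around this I would mimic Proposition~\ref{tp4-1}: transport $\cl\zeta$ by a homeomorphism $g_u\in\aut X$ with $g_u(\lim_u\zeta)=z$, so that $\cl{\xi_u}=g_u(\cl\zeta)\cong\be\om$ is a compact F-space through $z$, and apply Proposition~\ref{ed8} inside these compact copies of $\be\om$ and inside their coordinate images $\cl{\pi_m(\zeta)}\cong\be\om$ — which are compact F-spaces even though $Y$ itself is only a $\be\om$ space. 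Along the single reference coordinate $m$ this forces the $m$th coordinate of $\xi_u$ to be eventually constant on a $u$-large set, steering $\xi_u$ off the line $L$ and toward one of the remaining coordinate lines, which supplies $m_u$. Making this confinement complete for every $n$, with $Y$ merely a $\be\om$ space and no selective ultrafilters available, is the delicate point I expect to cost the most work.
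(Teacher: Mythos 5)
There is a genuine gap, and it sits exactly where you say you expect trouble: the confinement of the sequences realizing the incomparable ultrafilters to coordinate lines. Your proposed workaround via Proposition~\ref{ed8} cannot work as described. First, with $\dim_p(K)\le 1$ your reference sequence $\zeta$ has only \emph{one} discrete exact coordinate, so Proposition~\ref{ed8} applied along that coordinate merely forces the $m$th coordinate of $\xi_u$ to be eventually equal to $\pi_m(z)$ on a $u$-large set; for $n\ge 3$ this confines $\xi_u$ to a copy of $Y^{n-1}$, not to a line, and there is no supply of further discrete exact coordinates to iterate with (that iteration is available in Proposition~\ref{tp4-1} precisely because there $\dim_p\ge n-1$). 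Second, to apply Proposition~\ref{ed8} you need $\pi_m(\zeta)$ and $\pi_m(\xi_u)$ to sit in a common F-space; $\cl{\pi_m(\zeta)}$ alone is one, but $\pi_m(\xi_u)$ has no reason to lie in it, and the union of two copies of $\be\om$ inside a $\be\om$ space need not be an F-space. So the ``delicate point'' is not merely costly: the route is blocked.

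The idea you are missing is much simpler and makes the weak $P$-point machinery unnecessary. The sequences $\xi_u$ furnished by $\sp_k(z,X)=\om^*$ are by definition elements of $\gs_{dk}(X)$, i.e., their ranges have \emph{compact closure in $X$}. Take $n+1$ pairwise $\lkr$-incomparable ultrafilters $r_0,\dots,r_n$ and realizing sequences $\zeta_i\in\gs_{dk}(X)$ with $z=\lim_{r_i}\zeta_i$; then $K'=\bigcup_{i\le n}\cl{\zeta_i}$ is itself a compact subset of $X$, so the dimension hypothesis applies to $K'$ (the proposition is used, via Proposition~\ref{tp4-1} in Theorem~\ref{t4}, under the reading that \emph{every} compact subset of $X$ has $\dim_p\le 1$, and the paper's proof runs the contradiction under that blanket assumption). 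Now Statement~\ref{at3}(6) gives a neighborhood of $z$ in $K'$ contained in $\cp_z^*(Y,n,1)=F_0\cup\dots\cup F_{n-1}$, the $n$ axis-parallel lines through $z$; since each $\zeta_i$ lies in $K'$ and $r_i$-converges to $z$, the ultrafilter property picks an $m_i<n$ with $\{k\in\om: x_{i,k}\in F_{m_i}\}\in r_i$. Pigeonhole over $n+1$ ultrafilters and $n$ lines yields $i<j$ with $m_i=m_j=m$, whence $r_i,r_j\in\sp_k(z,F_m)$ with $F_m$ homeomorphic to $Y$, contradicting Proposition~\ref{ed6}. Your opening construction of the line $L$ and the reference coordinate $m$ is not needed at all; the only input from the infinite compact set is that $\sp_k(z,X)=\om^*$ via Proposition~\ref{ed4}.
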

      
\begin{proof}
      Suppose that, on the contrary, $X$ contains an infinite compact set and 
      $\dim_p(K)\leq 1$ for any compact space $K\subset X$.
      Let $q\in X$. By Proposition~\ref{ed4} we have 
$\sp_k(q,X)=\om^*$.
      Proposition~\ref{ed9} implies the existence of $n+1$ pairwise 
incomparable ultrafilters $r_0,r_1,\dots,r_n\in \om^*$.
      For each $i<n+1$, choose $\zeta_i=(x_{i,k})_{k\in\in\om}\in\gs_{dk}(X)$ so that
      $q=\lim_{r_i}\zeta_i$. For $K=\bigcup_{i=0}^n \cl{\zeta_i}$, we have 
$\dim_p K=1$.
      Let $\cp_q(X,n,1)=\sset{F_0,F_1,...,F_{n-1}}$.
      It follows from Statement~\ref{at3}(6) that if $i\leq n$, then 
      $\set{k\in\om: x_{i,k}\in F_{m_i}}\in\zeta_i$ for some $m_i<n$.
      Thus, $m=m_i=m_j$ for some $i<j<n$, so that 
$r_i,r_j\in\sp_k(q,F_m)$. Since $F_m$ is homeomorphic to $Y$ and $Y$ is a $\be\om$ 
space, we have obtained a contradiction to Proposition~\ref{ed6}.
      \end{proof}

      \begin{theorem}\label{t4} 
      Let $Y$ F-space, and let $X\subset Y^3$ be a homogeneous space.
      Then any compact subspace of $X$ is finite.
      \end{theorem}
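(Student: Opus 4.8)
The plan is to derive the theorem by squeezing the combinatorial invariant $\dim_p(K)$ of a hypothetical infinite compact subspace between the two incompatible bounds already established, which for the exponent $n=3$ collide and leave no room. First I would argue by contradiction, assuming that $X$ contains an \emph{infinite} compact subspace $K$. Since $Y$ is an F-space, it is in particular a $\be\om$ space, so both Proposition~\ref{tp4-1} (which requires only that $Y$ is an F-space and $n>1$) and Proposition~\ref{tp4-2} (which requires that $Y$ is a $\be\om$ space) are simultaneously available for $n=3$.

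Next I would apply Proposition~\ref{tp4-1} with $n=3$ to obtain $\dim_p(K)<n-1=2$, and then apply Proposition~\ref{tp4-2}, using that $K$ is infinite, to obtain $\dim_p(K)>1$. Because $\dim_p$ is integer-valued (it is a maximum of an integer-indexed condition), the first inequality forces $\dim_p(K)\le 1$ while the second forces $\dim_p(K)\ge 2$. These cannot both hold, so no infinite compact $K\subset X$ can exist; equivalently, every compact subspace of $X$ is finite.

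I do not expect any genuine obstacle here: all the real difficulty has already been absorbed into the two preceding propositions, which respectively exploit the F-space property together with three pairwise $\lkr$-incomparable weak $P$-points (through Propositions~\ref{ed8} and~\ref{ed6}) and the $\be\om$-space property together with $n+1$ pairwise incomparable ultrafilters. The only points requiring a moment's care are that $n=3$ is precisely the first exponent for which the upper bound $n-1=2$ and the lower bound $1$ meet, and that one must recall that every F-space is automatically a $\be\om$ space so that Proposition~\ref{tp4-2} genuinely applies to $Y$. With those two observations in place, the theorem is an immediate corollary of Propositions~\ref{tp4-1} and~\ref{tp4-2}.
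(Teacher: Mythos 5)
Your proposal is correct and coincides with the paper's own argument: assume an infinite compact $K\subset X$, apply Proposition~\ref{tp4-1} to get $\dim_p K<2$ and Proposition~\ref{tp4-2} to get $\dim_p K>1$, and conclude by the integrality of $\dim_p$. The observation that every F-space is a $\be\om$ space (so Proposition~\ref{tp4-2} applies) is exactly the bridge the paper relies on.
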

      
\begin{proof}
      Suppose that, on the contrary, $X$ contains an infinite 
compact set $K$.
      Then Proposition~\ref{tp4-1} implies $\dim_p K < 3-1=2$, and 
      Proposition~\ref{tp4-2} implies $\dim_p K > 1$.
      \end{proof}
      
      \begin{question} \label{q2}
      Let $Y$ be an F-space, and let $X\subset Y^4$ be a homogeneous space.
      Is it true that any compact subspace of $X$ is finite?
      \end{question}
      
      \begin{proposition}\label{tp5}
      Suppose that $Y$ is a space, $n\in\om$,  $X\subset Y^n$ is a zero-dimensional 
homogeneous compact space, and $k=\dim_p X$. 
      Then there exists a finite space $Z$ such that $X$ is embedded in 
$Z\times Y^k$ and $(Z\times Y)^k$.
      \end{proposition}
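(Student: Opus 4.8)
The plan is to reduce everything to a single clopen partition. Suppose we can write $X=\bigsqcup_{l=1}^{s}X_l$ with each $X_l$ clopen and \emph{abstractly} homeomorphic to a subspace of $Y^{k}$. Choosing embeddings $e_l\colon X_l\hookrightarrow Y^{k}$, set $Z=\sset{1,\dots,s}$ (discrete) and define $f\colon X\to Z\times Y^{k}$ by $f(x)=(l,e_l(x))$ for $x\in X_l$. Then $f$ is continuous (the pieces are clopen) and injective (distinct pieces receive distinct $Z$-coordinates, and each $e_l$ is injective), hence an embedding, since $X$ is compact and $Z\times Y^{k}$ is Hausdorff. The embedding into $(Z\times Y)^{k}$ is then immediate: the map $(z,(y_1,\dots,y_k))\mapsto((z,y_1),\dots,(z,y_k))$ embeds $Z\times Y^{k}$ into $(Z\times Y)^{k}$. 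So it suffices to show that \emph{every} point of $X$ has a clopen neighbourhood embeddable into $Y^{k}$; a finite subcover and disjointification then produce the partition.

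To find such neighbourhoods I would track, for each $q\in X$, which coordinates vary locally. Call $i<n$ \emph{inactive at $q$} if $q\in\intx X{\sset{x\in X:\pi_i(x)=\pi_i(q)}}$, let $V(q)$ be the set of active (non-inactive) coordinates, and put $v(q)=|V(q)|\in\sset{0,1,\dots,n}$. If $i$ is inactive at $q$, witnessed by an open $U\ni q$ on which $\pi_i\equiv\pi_i(q)$, then $i$ is inactive at every point of $U$; hence the inactive set can only grow near $q$, so $v$ does not increase on a neighbourhood of each point. As $v$ takes finitely many values it attains a minimum $v_{*}$ at some $q_{*}$. On a clopen neighbourhood $N$ of $q_{*}$ we then have $v\equiv v_{*}$, and since the inactive sets only grow while their cardinality is fixed, $V(\cdot)$ is constant $=V_{*}:=V(q_{*})$ on $N$. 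Shrinking $N$ (by zero-dimensionality) to a clopen $N'$ on which the finitely many coordinates outside $V_{*}$ are globally constant, we obtain a clopen $N'\ni q_{*}$ on which every coordinate of $V_{*}$ is active at each point and every coordinate outside $V_{*}$ is constant.

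The key inequality is $v_{*}\le k$. Since the coordinates off $V_{*}$ are constant on $N'$, the projection identifies $N'$ with $N''=\pi_{V_{*}}(N')\subset Y^{V_{*}}$, and $\dim_p N''=\dim_p N'\le\dim_p X=k$ by Statement~\ref{at3}(1). Writing $m=|V_{*}|$ and viewing $Y^{V_{*}}$ as $Y^{m}$, suppose $\dim_p N''<m$. Then Statement~\ref{at3}(5) covers $N''$ by finitely many members of $\cp(Y,m,m-1)$, each fixing one coordinate $c\in V_{*}$ to a constant. As $N''$ is compact Hausdorff, hence Baire, one of these (closed) sets has nonempty interior $O$ in $N''$; then $\pi_c$ is constant on $O$, so $c$ is inactive at the corresponding points of $N'$, contradicting activity of all of $V_{*}$. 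Thus $\dim_p N''\ge m$, giving $v_{*}=m\le k$. Consequently $q_{*}$ is a \emph{good} point: extending $V_{*}$ to some $M$ with $|M|=k$, a clopen neighbourhood $W_{*}\ni q_{*}$ lies in the single slab $\sset{q_{*}\restriction_{n\setminus M}}\times Y^{M}$, so $W_{*}$ is homeomorphic to a subspace of $Y^{M}\cong Y^{k}$. Now homogeneity enters: for arbitrary $q\in X$ pick $g\in\aut X$ with $g(q)=q_{*}$; then $g^{-1}(W_{*})$ is a clopen neighbourhood of $q$ homeomorphic to $W_{*}$, hence embeddable into $Y^{k}$. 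This gives the required covering and completes the argument.

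The main obstacle, and the conceptual heart of the proof, is that homogeneity cannot be applied coordinatewise. A point may behave like a \emph{crossing} -- active in more than $k$ coordinates, with no clopen neighbourhood inside a single coordinate slab -- and at such a point \emph{no} coordinate projection embeds a neighbourhood into $Y^{k}$. The delicate step is therefore to extract a \emph{single} good point (the minimisation of $v$ together with the Baire/dimension count in the third paragraph does this, and uses no homogeneity), after which transitivity of $\aut X$ transports the good abstract neighbourhood $W_{*}$ to every point, so that crossing points are resolved by an abstract homeomorphism rather than by any projection. I expect verifying $v_{*}\le k$ and checking that activity behaves well under restriction to clopen sets to be the points demanding the most care.
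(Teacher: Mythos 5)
Your proof is correct, and its skeleton --- find one nonempty clopen subset of $X$ contained in a single slab homeomorphic to $Y^k$, transport it to every point of $X$ by homogeneity, extract a finite clopen subcover, disjointify, and index the pieces by a finite discrete $Z$ --- is exactly the paper's. Where you genuinely diverge is in locating that good set. The paper gets it in one line: Statement~\ref{at3}(5) applied to $X$ itself with $k=\dim_p X$ covers $X$ by finitely many closed slabs from $\cp(Y,n,k)$, each homeomorphic to a subspace of $Y^k$, and since finitely many closed sets cover the compact space $X$, one of them has nonempty interior in $X$; that interior is the good open set. Your active/inactive-coordinate machinery, the minimization of $v$, and the Baire/dimension count inside $N''$ re-derive the same conclusion by a local analysis; the argument is sound (the semicontinuity of the active set, the identification of $N'$ with $N''\subset Y^{V_*}$, and the contradiction between $\dim_p N''<m$ and activity of all of $V_*$ all check out), but it is much longer than necessary --- the global application of Statement~\ref{at3}(5) already does the work, and no ``crossing point'' analysis is needed because one never requires every point to be good a priori, only one. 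On the other side of the ledger, you are more careful than the paper about the details it suppresses: the explicit embedding $x\mapsto(l,e_l(x))$, the disjointification of the clopen cover, and above all the second embedding $Z\times Y^k\hookrightarrow(Z\times Y)^k$, which the paper's proof does not address at all (note that this last map, like the statement itself, tacitly assumes $k\geq 1$; for $k=0$ the space $X$ is finite and the assertion about $(Z\times Y)^k$ degenerates).
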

 
     \begin{proof}
According to Statement~\ref{at3}(5), we have  $X\subset \bigcup\gamma$ for 
some finite set $\gamma\subset \cp(Y,n,k)$. The family $\cp(Y,n,k)$ consists of 
closed subsets; therefore, $\intx X{X\cap F}\neq \es$ for some 
$F\in\gamma$.
      
Since $X$ is a homogeneous zero-dimensional compact set, it follows that there exists a finite 
partition $\lambda$ of $X$ into clopen subsets such that 
each $U\in\lambda$ is embedded in $\intx X{X\cap F}\subset F$. The space $F$ 
is homeomorphic to $Y^k$; hence $X$ is embedded in $Z\times Y^k$, where $Z$ is a finite discrete 
space of cardinality $|Z|=|\gamma|$.
      \end{proof}

      \begin{theorem}\label{t5} 
      If $Y$ is an F-space, $n\in\om$, and $X\subset Y^n$ is a homogeneous 
compact space, then $X$ is finite.
      \end{theorem}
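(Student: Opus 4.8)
The plan is to prove, by induction on $n$, that for every F-space $Y$ a homogeneous compact space $X\subset Y^n$ is finite. The cases $n\le 3$ form the base and already follow from the dimension estimates established above: if $K\subset X$ were an infinite compact subspace, then Proposition~\ref{tp4-2} (applicable because $Y$ is a $\be\om$ space) would give $\dim_p K\ge 2$, while for $n>1$ Proposition~\ref{tp4-1} (applicable because $Y$ is an F-space) would give $\dim_p K\le n-2\le 1$, and for $n=1$ one has $\dim_p K\le 1$ trivially; either way this is impossible. This is the mechanism behind Theorem~\ref{t4} (and Theorem~\ref{t1} settles $n=1$ directly), so it remains to carry out the inductive step for $n\ge 4$.

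For the inductive step I would assume, toward a contradiction, that $X\subset Y^n$ is homogeneous, compact and infinite with $n\ge 4$, and set $k=\dim_p X$. Proposition~\ref{tp4-2} gives $k\ge 2$ and Proposition~\ref{tp4-1} gives $k\le n-2$, so in particular $2\le k<n$. The engine of the argument is Proposition~\ref{tp5}: granting that $X$ is zero-dimensional, it yields a finite discrete space $Z$ and an embedding of $X$ into $(Z\times Y)^k$. Since $Z$ is finite, $W:=Z\times Y$ is a finite topological sum of copies of $Y$, and a finite sum of F-spaces is again an F-space (a cozero set of the sum meets each clopen summand in a cozero set, and the required bounded extensions are assembled summand by summand). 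Thus the homeomorphic copy of $X$ inside $W^k$ is a homogeneous compact infinite subspace of a power $W^k$ of the F-space $W$ with $k<n$, contradicting the inductive hypothesis applied to $W$ and closing the induction.

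The hard part is verifying the zero-dimensionality hypothesis of Proposition~\ref{tp5}, and this is where I expect the real work to lie. The starting observation is that $Y^n$ contains no nontrivial convergent sequence: each coordinate of a convergent sequence converges in the F-space $Y$ and is therefore eventually constant, so the whole sequence is eventually constant. Consequently $X$ is a compact space with no nontrivial convergent sequences, and hence contains no nondegenerate metrizable continuum. To upgrade this to total disconnectedness --- which for the compact Hausdorff space $X$ is equivalent to zero-dimensionality --- I would invoke homogeneity: if a self-homeomorphism of $X$ carries one point of a connected component to another, it preserves that component, so each component $C$ of $X$ is itself a homogeneous compact connected subspace of $Y^n$ inheriting the absence of nontrivial convergent sequences. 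The crux is then to show that such a $C$ must be a single point, which I would attack by projecting $C$ into the compact F-spaces $\pi_i(X)\subset Y$ and excluding a nondegenerate homogeneous continuum under these constraints. Once every component is shown to be degenerate, $X$ is zero-dimensional and the reduction of the previous paragraph applies.
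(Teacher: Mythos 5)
Your overall skeleton is the same as the paper's: the paper runs a minimal-counterexample argument (take the least $m$ such that $X$ embeds in the $m$th power of an F-space), invokes Proposition~\ref{tp5} to re-embed $X$ into $(Z\times Y)^k$ with $k=\dim_p X$, observes that $Z\times Y$ is again an F-space, and derives a contradiction from Proposition~\ref{tp4-1}; your induction on $n$ with base cases from Theorems~\ref{t1} and~\ref{t4} is the same mechanism in different clothing, and your verification that a finite sum of F-spaces is an F-space is correct and needed. The difference is entirely in how the zero-dimensionality hypothesis of Proposition~\ref{tp5} is discharged: the paper disposes of it with the one-line assertion that all F-spaces are zero-dimensional, whereas you correctly recognize that this is the point requiring real work. (Indeed the paper's assertion is false as a general statement about F-spaces: $\beta[0,\infty)\setminus[0,\infty)$ is a nondegenerate connected compact F-space. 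So you have located the genuinely weak joint of the argument.)

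However, as submitted your proposal does not close this gap, and the tools you offer cannot close it. From the absence of nontrivial convergent sequences in $Y^n$ you can only exclude nondegenerate \emph{metrizable} continua; the example just cited shows that a compact F-space can contain a nondegenerate continuum with no nontrivial convergent sequences, so nothing in your first two observations forces the components of $X$ to be points. You correctly note that each component $C$ of $X$ is itself a homogeneous compact connected subspace of $Y^n$, but the remaining task --- showing that a nondegenerate homogeneous continuum cannot sit inside $Y^n$ --- is essentially an instance of the theorem you are trying to prove (and it lives in the same power $Y^n$, so your inductive hypothesis does not apply to it). The sentence ``which I would attack by projecting $C$ into the compact F-spaces $\pi_i(X)$ and excluding a nondegenerate homogeneous continuum under these constraints'' is a statement of intent, not an argument. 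Until you either prove that a homogeneous compact subspace of a finite power of an F-space is zero-dimensional, or reformulate Proposition~\ref{tp5} so as not to need that hypothesis, the induction is not closed and the proof is incomplete.
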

      
\begin{proof}
      Suppose that, on the contrary, $X$ is infinite. We can assume that $n$ is the least 
positive integer $m$ such that $X$ is embedded in the $m$th power of an F-space. 
All F-spaces are zero-dimensional; therefore, 
      $\dim_p X=n$ by Proposition~\ref{tp5}. This contradicts Proposition~\ref{tp4-1}.
      \end{proof}
      
      \bibliographystyle{amsplain}

      \end{document}